\documentclass[11pt]{amsart}
\usepackage{amscd, amssymb}
\usepackage{graphics}

\theoremstyle{plain}

\numberwithin{equation}{section}
\newtheorem{theorem}[equation]{Theorem}
\newtheorem{proposition}[equation]{Proposition}
\Huge
\newtheorem{lemma}[equation]{Lemma}

\theoremstyle{definition}
\newtheorem{remark}[equation]{Remark}
\newtheorem{example}[equation]{Example}

\def    \R  {{\Bbb R}}
\def    \Z  {{\Bbb Z}}
\def    \Q  {{\Bbb Q}}

\def    \CP {{\Bbb {CP}}}
\def    \P {{\Bbb {P}}}
\def    \C  {{\Bbb C}}
\def    \N  {{\Bbb N}}

\def  \rank {{\operatorname{rank}}}
\def    \codim  {{\operatorname{codim}}}

\def  \deg {{\operatorname{deg}}}

\def   \lot   {{\operatorname{\ lower \ order\ terms }}}   
\def    \Tilde  {\widetilde}
\def    \ut     {\Tilde{u}}

\def    \Gt     {\Tilde{G}}

\begin{document}
\title[Hamiltonian circle actions with fixed point set almost minimal] {Hamiltonian circle actions with  fixed point set almost minimal}

\author{Hui Li}
\address{School of mathematical Sciences\\
               Soochow University \\
               Suzhou, 215006, China.}
        \email{hui.li@suda.edu.cn}

\thanks{2000 classification. 53D05, 53D20, 55N25, 57R20, 32H02} \keywords{symplectic manifold, Hamiltonian circle action, equivariant cohomology, characteristic classes, K\"ahler manifold, symplectomorphism, biholomorphism.}

\begin{abstract}
Motivated by recent works on Hamiltonian circle actions  satisfying certain minimal conditions,  in this paper, we consider Hamiltonian circle actions satisfying an almost minimal
condition. More precisely, we consider a compact symplectic manifold $(M, \omega)$ admitting a Hamiltonian
circle action with fixed point set consisting of two connected components $X$ and $Y$ satisfying 
$\dim(X)+\dim(Y)=\dim(M)$. Under certain cohomology conditions,  we determine the circle action, the integral cohomology rings of $M$, $X$ and $Y$, and the total Chern classes of $M$, $X$, $Y$, and of the normal bundles of $X$ and $Y$. The results show that 
these data are unique --- they are exactly the same as those in the standard example $\Gt_2(\R^{2n+2})$, the Grassmannian of oriented $2$-planes in $\R^{2n+2}$, which is of dimension $4n$ with (any) $n\in\N$, equipped with a standard circle action. Moreover, if $M$ is   
K\"ahler and the action is holomorphic, we can use a few different criteria to claim that $M$ is $S^1$-equivariantly biholomorphic and $S^1$-equivariantly symplectomorphic to $\Gt_2(\R^{2n+2})$. 
\end{abstract}

 \maketitle

\section{Introduction}
Let the circle act effectively on a compact symplectic manifold $(M,\omega)$ of dimension $2n$ with moment map
$\phi\colon M\to\R$. The fixed point set $M^{S^1}$, which is the same as the critical set of $\phi$,
contains at least two connected components --- the minimum and the maximum of $\phi$. In the case
when $M^{S^1}$ consists of exactly two connected components, $X$ and $Y$, since $M$ is compact and symplectic, by Morse-Bott theory, we must have
$$\dim(X)+\dim(Y) +2\geq \dim(M).$$
In \cite{LT}, we studied the case when the following {\it minimal} condition holds:
\begin{equation}\label{=}
\dim(X)+\dim(Y) +2 = \dim(M).
\end{equation}
Standard examples of manifolds satisfying (\ref{=}) are $\CP^n$, and $\Gt_2(\R^{n+2})$, the
Grassmannian of oriented $2$-planes in $\R^{n+2}$ with $n >1$ odd, equipped with standard circle actions. For the case when (\ref{=}) holds, we classified the circle action, the integral cohomology rings of $M$, $X$ and $Y$, and the total Chern classes of $M$, $X$, $Y$, and of the normal bundles of $X$ and $Y$. It turns out that these data on $M$ are exactly as in the two families of standard examples. In particular, $b_{2i}(M)=1$ and $b_{2i-1}(M)=0$ for all $0\leq 2i\leq\dim(M)$.
    
Note that for a compact symplectic manifold $(M, \omega)$, $0\neq [\omega]^i\in H^{2i}(M; \R)$, so
we always have $b_{2i}(M)\geq 1$ for all $0\leq 2i\leq\dim(M)$. If $b_{2i}(M) = 1$ for some  $2i$, we say that the $2i$-th Betti number of $M$ is {\it minimal}. The even Betti numbers of $\CP^n$ and of $\Gt_2(\R^{n+2})$ with $n > 1$ odd are all minimal.

Originally motivated by the classical Petrie's conjecture, Tolman studied Hamiltonian
circle actions on compact $6$-dimensional manifolds with minimal even Betti numbers \cite{T}. Then a number of work
\cite{{M}, {LT}, {LOS}, {L}, {Mor}, {GS}, {JT}} appeared about Hamiltonian circle actions on compact manifolds with minimal even Betti numbers or with fixed point set satisfying a minimal condition 
$$\sum_{F\subset M^{S^1}}(\dim(F)+2) = \dim(M)+2,$$ 
where the $F$'s are connected components of the fixed point set. These two minimal conditions are closely related \cite[Sec. 4]{LT}. In particular, the condition of having minimal even Betti numbers implies the condition of fixed point set satisfying the above equality.

 We know an example, $\Gt_2(\R^{2n+2})$, of dimension $4n$, equipped with a standard Hamiltonian circle action, whose fixed point set consists of two connected components $X$ and $Y$ satisfying the following {\it almost minimal}
condition
\begin{equation}\label{a=}
\dim(X)+\dim(Y) = \dim(M).
\end{equation}
Note that $b_{2i}\big(\Gt_2(\R^{2n+2})\big)=1$ for $0\leq 2i\leq 4n$ and
$2i\neq 2n$, $b_{2n}\big(\Gt_2(\R^{2n+2})\big)=2$, and $b_{2i-1}\big(\Gt_2(\R^{2n+2})\big)=0$ for all $i$. So the even Betti numbers of $\Gt_2(\R^{2n+2})$ are almost minimal.
   
 In this paper, we look at Hamiltonian circle actions on compact manifolds with fixed point set consisting of two connected components satisfying (\ref{a=}). Let us first  look at the standard example and the data on it.

\begin{example}\label{grass}
Let $n\geq 1$, and let $\Gt_2(\R^{2n+2})$ be the Grassmannian of oriented $2$-planes in $\R^{2n +2}$.
This $4n$-dimensional manifold is a coadjoint orbit of $SO(2n+2)$, so it is a symplectic
(K\"ahler) manifold and it admits a Hamiltonian $SO(2n+2)$ action. 

There is a Hamiltonian $S^1\subset SO(2n+2)$ action on $\Gt_2(\R^{2n+2})$ induced by the $S^1$ action on
$\R^{2n+2}\cong \C^{n+1}$ given by
$$\lambda\cdot (z_1, \cdots, z_{n+1}) = (\lambda z_1, \cdots, \lambda z_{n+1}).$$
The fixed point set consists of two connected components $X$ and $Y$, where 
$$X\cong Y\cong \CP^n,$$
corresponding to the two orientations on the real $2$-planes in $\P(\C^{n+1})$.  The $S^1/\Z_2\cong S^1$ action on 
$\Gt_2(\R^{2n+2})$
is {\it semifree} (i.e., free outside fixed points). 

  Let $[\omega]$ be a primitive integral class on $\Gt_2(\R^{2n+2})$ represented by the symplectic form $\omega$ such that  $[\omega|_X]=u$ and $[\omega|_Y]=v$ are
also primitive integral (such a form exists by Lemma~\ref{prim} when $n\geq 2$. If $n=1$, $\Gt_2(\R^{2n+2})$ is diffeomorphic to $S^2\times S^2$, we take $\omega=x_1+x_2$, where $x_1$ and $x_2$ are positive generators
of $H^2(S^2\times\mbox{pt}; \Z)$ and $H^2(\mbox{pt}\times S^2; \Z)$ respectively.). Then the total Chern classes of 
$\Gt_2(\R^{2n+2})$, $X$, $Y$, and of the normal bundles $N_X$ of $X$ and $N_Y$ of $Y$ are respectively as follows: 
$$c\big(\Gt_2(\R^{2n+2})\big) = \frac{(1+[\omega])^{2n+2}}{1+2[\omega]},\,\,\,\mbox{in particular},\,\,\, c_1\big(\Gt_2(\R^{2n+2})\big) = 2n[\omega],$$
$$c(X) = (1+u)^{n+1}, \,\,\,  c(Y) = (1+v)^{n+1},$$
$$c(N_X)=\frac{(1+u)^{n+1}}{1+2u}, \,\,\mbox{and}\,\,\,  c(N_Y) = \frac{(1+v)^{n+1}}{1+2v}.$$
The integral cohomology groups of $\Gt_2(\R^{2n+2})$ are:
 \[  H^i\big(\Gt_2(\R^{2n+2}); \Z\big) = \left\{ \begin{array}{ll}
           \Z    &  \mbox{if $ i$  is even and  $i\neq 2n$},\\
           \Z\oplus \Z  &    \mbox{if $i =2n$},\\
           0    & \mbox{if $i$  is odd}.
           \end{array} \right. \]
 The integral cohomology ring of  $\Gt_2(\R^{2n+2})$ is
\[ H^*\big(\Gt_2(\R^{2n+2}); \Z\big) = \left\{\begin{array}{ll}
 \Z[x, y]/\big(x^{n+1}-2xy, y^2\big) & \mbox{if $n$ is odd},\\
\Z[x, y]/\big(x^{n+1}-2xy, y^2-x^ny\big) & \mbox{if $n$ is even},
\end{array}\right.\]
where $x=[\omega]$ and $\deg(y)=2n$.

These data also follow from the results of this paper. In the literature, the cohomology ring 
$H^*\big(\Gt_2(\R^{2n+2}); \Z\big)$ may be given differently using different generators.
\end{example}

For the case when (\ref{=}) holds, the condition (\ref{=}) implies that the even degree cohomology groups
of $X$ and $Y$ are one dimensional, for the current case when (\ref{a=}) holds, this is not true, see Remark~\ref{mam}. In this paper, for the case when (\ref{a=}) holds, the idea is we assume one or two fixed point components have the same {\it even} degree integral cohomology as some complex projective spaces (just as in Example~\ref{grass}), we show that the important data on $M$ are exactly as in Example~\ref{grass}. These results, in particular, imply that in the K\"ahler case, $M$ is unique up to $S^1$-equivariant biholomorphism and symplectomorphism.

\begin{remark}
In our main theorems, for $\dim(M) > 4$, our assumption grants that $\dim H^2(M; \R)=1$, so up to scaling, we can assume the symplectic class $[\omega]$ is primitive integral. This is not a serious assumption.
\end{remark}

Now let us state our main results. First, the first Chern class $c_1(M)$ is an important data, in particular for the K\"ahler case. For the two fixed point set
components $X$ and $Y$, with no loss of generality, we may assume 
$\dim(X)\geq\frac{1}{2}\dim(M)$. With a condition on $H^{\mbox{even}}(X; \Z)$, we obtain a result on the action and on $c_1(M)$ as follows. 

\begin{theorem}\label{c1}
Let $(M, \omega)$ be a compact symplectic manifold of dimension bigger than $4$ admitting an effective Hamiltonian $S^1$ action with moment map $\phi$ such that the fixed point set consists of two connected components $X$ and $Y$ with $\dim(X)+\dim(Y)=\dim(M)$. Assume $[\omega]$ is a primitive integral class, $\dim(X)\geq\frac{1}{2}\dim(M)$, and $H^{\mbox{even}}(X; \Z)=\Z[u]/u^{\frac{1}{2}\dim(X)+1}$, where $u=[\omega|_X]$. Then $\dim(X) = \frac{1}{2}\dim(M)$ and the following $3$ conditions are equivalent:
\begin{enumerate}
\item the action is semifree,
\item $|\phi(Y)-\phi(X)|=1$, \,\,\mbox{and}
\item $c_1(M)= \frac{1}{2}\dim(M) [\omega]$.
\end{enumerate}
\end{theorem}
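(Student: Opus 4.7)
My plan is to prove Theorem~\ref{c1} in two parts: first establish $\dim(X) = \tfrac{1}{2}\dim(M)$ via Morse--Bott theory and Poincar\'e duality, then deduce the three equivalences using the gradient-sphere construction together with a calculation in $S^1$-equivariant cohomology.

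For the dimension, I would first reverse the $S^1$-action if necessary so that $X$ is the minimum and $Y$ the maximum of $\phi$, giving Morse--Bott indices $0$ and $\dim(X)$. Since $\phi$ is a perfect Morse--Bott function on a compact Hamiltonian $S^1$-manifold,
$$P_t(M) = P_t(X) + t^{\dim(X)} P_t(Y).$$
Set $2k = \dim(X)$ and $2m = \dim(M)$. The hypothesis on $H^{\mathrm{even}}(X;\Z)$ gives $b_{2i}(X) = 1$ for $0 \le 2i \le 2k$, so the Morse formula yields $b_{2i}(M) = 1$ for $0 \le 2i < 2k$ and $b_{2k}(M) = b_{2k}(X) + b_0(Y) = 2$. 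Assume for contradiction that $k > m/2$. Since $\dim(Y) \ge 2$ forces $k \le m-1$, we have $0 < 2(m-k) < 2k$, so $b_{2(m-k)}(M) = 1$; but Poincar\'e duality on $M$ gives $b_{2(m-k)}(M) = b_{2k}(M) = 2$, a contradiction. Hence $k = m/2$.

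With $\dim(X) = \dim(Y) = m$, I would invoke the gradient-sphere lemma: each weight $w$ of the $S^1$-action on $N_X$ at a point $p \in X$ is realized by an $S^1$-invariant embedded $\CP^1$ through $p$ whose other fixed point lies in a different fixed component at moment-map value $\phi(p) + w$. Since only $Y$ is available, every weight at $X$ equals $\phi(Y) - \phi(X) =: w$ and every weight at $Y$ equals $-w$. This immediately yields (1)$\Leftrightarrow$(2), both being equivalent to $w = 1$. For (3), using $\dim H^2(M;\R) = 1$ I would write $c_1(M) = c[\omega]$ with $c \in \Z$, and compare two expressions for $c_1^{S^1}(M)|_F$: the formula $c_1(M)|_F + \bigl(\sum_j w_j^F\bigr)\,t$ coming from $TM|_F = TF \oplus N_F$, and the decomposition $c_1^{S^1}(M) = c\,[\omega]^{S^1} + e\,t$ in $H^2_{S^1}(M)$ with $[\omega]^{S^1}|_F = [\omega|_F] - \phi(F)\,t$. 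Subtracting the restrictions at $F = X$ and $F = Y$ yields
$$c\bigl(\phi(Y) - \phi(X)\bigr) = \sum_j w_j^X - \sum_j w_j^Y = mw,$$
so $c = m$. Finally, effectiveness forces $w = 1$ (otherwise the subgroup $\Z_w \subset S^1$ acts trivially on a neighborhood of every fixed point, hence on all of $M$ by connectedness), so all three conditions hold and are equivalent.

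The hardest step will be the Morse--Poincar\'e-duality squeeze that pins down $\dim(X)$: it exploits the cohomology ring of $X$ together with a tight interplay between the Morse--Bott decomposition of $P_t(M)$ and duality on $M$. Once $\dim(X) = \dim(Y) = m$ is available, the gradient-sphere lemma makes the weights uniform, and the equivariant Chern-class comparison becomes a routine bookkeeping.
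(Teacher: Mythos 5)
Your first step --- the Morse--Bott/Poincar\'e-duality squeeze giving $\dim(X)=\frac{1}{2}\dim(M)$ --- is correct and is exactly the paper's Lemma~\ref{halfdim}. The rest of the argument, however, rests on a ``gradient-sphere lemma'' that is false as stated. An $S^1$-invariant sphere through $p\in X$ on which the action has weight $w$ joins $X$ to $Y$ at moment-map value $\phi(p)+w\cdot A$, where $A$ is the (normalized) symplectic area of the sphere, not at $\phi(p)+w$; integrality of $[\omega]$ only gives $A\in\N$, hence only the divisibility $w\mid(\phi(Y)-\phi(X))$ (this is Lemma~\ref{ut}), never the equality $w=\phi(Y)-\phi(X)$. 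Nor is it true that all weights on $N_X$ coincide: by Proposition~\ref{<6} the distinct weights form a set $\{1,2,\dots,N\}$, so in a non-semifree situation they are genuinely different. The paper's own four-dimensional example $(\Sigma_g\times S^2,\,x_1+2x_2)$, with $S^1$ rotating $S^2$ semifreely, is a direct counterexample to your claim: every weight is $\pm1$ yet $\phi(Y)-\phi(X)=2$. A telltale symptom is that your argument concludes $w=1$ unconditionally from effectiveness, which would make all three conditions always true and would moreover prove semifreeness under a hypothesis on $X$ alone --- something the paper only achieves with the additional hypothesis on $Y$ (Proposition~\ref{nex}).

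Concretely, the implication $(1)\Rightarrow(2)$ is the hard step and cannot be obtained by weight bookkeeping: it genuinely uses $\dim(M)>4$ and $H^{\mbox{even}}(X;\Z)=\Z[u]/u^{n+1}$, neither of which enters your equivalence argument. The paper proves it via Proposition~\ref{euler}: Lemma~\ref{mult} produces $\lambda\, e^{S^1}(N_X)=(mt+u)^{n+1}$ with $m=\phi(Y)-\phi(X)$, semifreeness and the cohomology hypothesis force $e^{S^1}(N_X)=t^n+a_1t^{n-1}u+\cdots+a_nu^n$, and the arithmetic Lemma~\ref{basic} then forces $m=1$. Your direction $(2)\Rightarrow(1)$ can be salvaged directly from the divisibility in Lemma~\ref{ut}, and your equivariant computation of $c_1(M)$ (essentially Lemma~\ref{c1L}) does give $(1)\Rightarrow(3)$ once $(1)\Rightarrow(2)$ is known; but $(3)\Rightarrow(1)$ also needs a separate estimate (in the paper, combining Propositions~\ref{1+2} and \ref{<6} with Lemma~\ref{ut} to bound $\Gamma_X$ and $\phi(Y)-\phi(X)$ when the action is not semifree). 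As it stands the proposal has a genuine gap at the central equivalences.
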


Assuming a condition on both fixed point set components, we obtain the following results. 

\begin{theorem}\label{lg}
Let $(M, \omega)$ be a compact symplectic manifold of dimension bigger than $4$ admitting an effective Hamiltonian $S^1$ action such that the fixed point set consists of two connected components $X$ and $Y$ with $\dim(X)+\dim(Y)=\dim(M)$. Assume $[\omega]$ is a primitive integral class,
$H^{\mbox{even}}(X; \Z) = \Z[u]/u^{\frac{1}{2}\dim(X)+1}$ and $H^{\mbox{even}}(Y; \Z)=\Z[v]/v^{\frac{1}{2}\dim(Y)+1}$, where $u= [\omega|_X]$
and $v= [\omega|_Y]$. Then 
\begin{enumerate}
\item $\dim(X)=\dim(Y)=2n$, $\dim(M)=4n$, where $n >1$,
\item the action is semifree,
\item $H^*(X; \Z)= \Z[u]/u^{n+1}$  and  $H^*(Y; \Z)=\Z[v]/v^{n+1}$,
\item $H^*(M; \Z)\cong H^*\big(\Gt_2(\R^{2n+2}); \Z\big)$ as rings,
\item $c(M)\cong c\big(\Gt_2(\R^{2n+2})\big)$, and
\item $c(X)$, $c(Y)$, $c(N_X)$ and $c(N_Y)$ are all isomorphic to those in Example~\ref{grass}.
\end{enumerate}
\end{theorem}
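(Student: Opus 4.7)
My plan is to leverage Theorem~\ref{c1} for both the dimensional collapse and the three-way equivalence around semifreeness, then to extract the remaining cohomology and Chern-class information using Atiyah-Bott-Berline-Vergne (ABBV) localization together with Morse-Bott theory. First, swap the roles of $X$ and $Y$ if necessary so that $\dim(X)\geq\dim(Y)$, hence $\dim(X)\geq\frac{1}{2}\dim(M)$. The hypothesis on $X$ then matches the hypothesis of Theorem~\ref{c1}, which gives $\dim(X)=\frac{1}{2}\dim(M)$. Combined with $\dim(X)+\dim(Y)=\dim(M)$ and $\dim(M)>4$, this yields $\dim(X)=\dim(Y)=2n$, $\dim(M)=4n$, $n>1$, proving (1).

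With $\dim(X)=\dim(Y)$, Theorem~\ref{c1} now applies symmetrically at either component, so (2) and $c_1(M)=2n[\omega]$ will follow once I verify any one of them, or the equivalent $|\phi(Y)-\phi(X)|=1$. I would use ABBV. Let $\omega_{S^1}$ be an integral equivariant extension of $[\omega]$, normalized so that $\omega_{S^1}|_X=u$ and $\omega_{S^1}|_Y=v-\Lambda t$ with $\Lambda=\phi(Y)-\phi(X)>0$. The ring hypotheses give $H^2(X)=\Z\langle u\rangle$ and $H^2(Y)=\Z\langle v\rangle$, so splitting the normal bundles into line summands produces
\[
e^{S^1}(N_X)=\prod_{i=1}^{n}(k_iu+w_it),\qquad e^{S^1}(N_Y)=\prod_{j=1}^{n}(k'_jv+w'_jt),
\]
with $w_i>0$ and $w'_j<0$. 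Applying ABBV to $\omega_{S^1}^\ell$ for $0\leq\ell\leq 2n-1$ (where the total-space integral vanishes) and reducing modulo $u^{n+1}=v^{n+1}=0$ will yield a rigid system of rational identities in $\Lambda$ and in the Chern-weight data. The main task, and the hardest step, is to show this system together with positivity and integrality forces every $|w_i|,|w'_j|=1$ and $\Lambda=1$; I anticipate needing an induction on $n$ or on the smallest absolute weight, and possibly leveraging the positivity of the symplectic volume computed by ABBV with $\ell=2n$.

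Once the action is semifree and $c_1(M)=2n[\omega]$, the Morse-Bott decomposition of $\phi$ (indices $0$ at $X$ and $2n$ at $Y$) gives additively $H^*(M;\Z)\cong H^*(X;\Z)\oplus H^{*-2n}(Y;\Z)$. Equivariant formality (injectivity of $H^*_{S^1}(M)\hookrightarrow H^*_{S^1}(X)\oplus H^*_{S^1}(Y)$) combined with the even-cohomology hypotheses on $X$ and $Y$ forces $H^{\mbox{odd}}(X)=H^{\mbox{odd}}(Y)=0$, giving (3). For (4) I would choose generators $x=[\omega]$ and $y\in H^{2n}(M;\Z)$ lifting the Thom class of $N_Y$; an ABBV computation on $x^{n+1}$ and on $y^2$ pins down the two defining relations and recovers the presentation in Example~\ref{grass}, with the two cases distinguished by the parity of $n$. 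For (5) and (6), semifreeness forces all weights on $N_X$ to be $+1$ and on $N_Y$ to be $-1$, so $c^{S^1}(N_X)=\prod_i(k_iu+t)$; combined with $c^{S^1}(TM)|_X=c(TX)\cdot c^{S^1}(N_X)$ and the restriction of $c_1(M)=2n[\omega]$, this determines $c(TX)$ and $c(N_X)$ recursively. The symmetric computation at $Y$, together with the injectivity of the restriction to $X\sqcup Y$ applied to $c(M)$, then yields the formulas of Example~\ref{grass}.
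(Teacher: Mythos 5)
Your reductions for part (1) are fine, but the core of the theorem is left unproved. The decisive step, semifreeness, is exactly the point you defer (``the main task, and the hardest step''), and the ABBV-only strategy you sketch is unlikely to close it: you cannot write $e^{S^1}(N_X)=\prod_i(k_iu+w_it)$ with $k_i\in\Z$ (the weight subbundles need not split into line bundles; by Lemma~\ref{chern} one only gets a polynomial in $t,u$, not a product of integral linear factors), and the ``rigid system of rational identities'' you hope to extract from $\int_M\omega_{S^1}^\ell=0$ is precisely what is not manageable without further structure. The paper's missing ingredient is Lemma~\ref{mult}: the Morse-theoretic long exact sequence for $(M,M^-)$ shows that $\bigl([\omega|_X]+t(\phi(Y)-\phi(X))\bigr)^{\frac12\dim(Y)+1}$ is a multiple of $e^{S^1}(N_X)$ in $H^*_{S^1}(X;\Z)$; combined with the stabilizer analysis of Proposition~\ref{1+2} (only $1$ and $\Z_2$ occur, with constraints on $\dim M^{\Z_2}$), the elementary Lemma~\ref{basic}, the factorization argument of Lemma~\ref{4t} over $M^{\Z_2}$, and a final ABBV contradiction (Proposition~\ref{nex}), this is what rules out non-semifree actions and simultaneously pins down $e^{S^1}(N_X)=(t+u)^{n+1}/(t+2u)$ (Proposition~\ref{exy}, which also needs Lemma~\ref{n-1n} and an integration of $c_1^{S^1}(M)$). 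Nothing in your proposal substitutes for this chain.

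Two further steps are asserted but do not follow as stated. For (3), equivariant formality plus the hypothesis on $H^{\mbox{even}}$ does not force $H^{\mbox{odd}}(X)=H^{\mbox{odd}}(Y)=0$: injectivity of $H^*_{S^1}(M)\to H^*_{S^1}(M^{S^1})$ constrains $M$, not the fixed components, and e.g.\ a surface $\Sigma_g$ has $H^{\mbox{even}}=\Z[u]/u^2$ with $H^1\neq 0$. The paper instead invokes \cite[Lemma 8.5]{LT}, which requires producing explicit equivariant classes $\ut$ and $\Tilde\mu$ with prescribed restrictions, using $\phi(Y)-\phi(X)=1$ and the computed $e^{S^1}(N_Y)$. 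For (5)--(6), semifreeness and $c_1(M)=2n[\omega]$ do not ``recursively determine'' $c(X)$ and $c(N_X)$: one needs the full equivariant Euler class above to get $c(N_X)$, and $c(X)$ is obtained in the paper by pushing $c^{S^1}(M)|_X$ through the isomorphism of Proposition~\ref{isom} restricted to a point in $Y$ — a step absent from your outline. Your ABBV computation of the relations $x^{n+1}=2xy$ and the $y^2$ relation for (4) is in the same spirit as Proposition~\ref{equibase}, but it too presupposes the module basis (Kirwan/Tolman--Weitsman) and the explicit $e^{S^1}(N_Y)$ that your argument has not produced.
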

\begin{remark}\label{d4}
Theorems~\ref{c1} and \ref{lg} do not hold for dimension $4$. For example, take 
$(M, \omega)=(\Sigma_g\times S^2, x_1 + 2x_2)$, where $\Sigma_g$ is a surface of genus $g\geq 0$, 
$x_1$ is a positive $H^2(\Sigma_g; \Z)$ generator and $x_2$ is a positive $H^2(S^2; \Z)$ generator.
Let $S^1$ act on $M$ by fixing $\Sigma_g$ and rotating semifreely on $S^2$. Then $X\cong Y\cong \Sigma_g$,
and the moment map image has length $2$.

Let $(M, \omega)$ be $4$-dimensional and satisfies the other conditions
of Theorem~\ref{lg}. Then the following things are true:
\begin{itemize}
\item [(a)]  $\dim(X)=\dim(Y)=2$, and $X\cong Y$, (where $\cong$ means diffeomorphic.)
\item [(b)]  the action is semifree, and
\item [(c)]  $c(N_X)=c(N_Y)=1$.
\end{itemize}
In fact, since the action is effective and $X$ and $Y$ are even
dimensional, $\dim(X)=\dim(Y)=2$; (b) easily follows; $X\cong Y$ follows  by looking at the symplectic reduced spaces at all values of the moment map; (c) is by Lemma~\ref{bothprim}.
If we pose the stronger assumption $H^*(X; \Z)=\Z[u]/u^2$, and still
assume $[\omega|_Y]$ is primitive integral, then 
\begin{itemize}
\item [(d)]  $X\cong Y\cong S^2$, and
\item [(e)]  $M$ is $S^1$-equivariantly diffeomorphic to $S^2\times S^2$, where $S^1$ acts on $S^2\times S^2$ by fixing the first sphere and rotating semifreely on the second sphere.
\end{itemize}
The claim (e) follows from (c) and (d).
If we choose $\omega$ suitably such that $[\omega|_{\mbox{pt}\times S^2}]$ is primitive integral, then all the data are exactly as in Example~\ref{grass} (for dimension $4$), and $|\phi(Y)-\phi(X)|=1$, where $\phi$ is the moment map.
\end{remark}

If we assume $H^*(M; \Z)\cong H^*\big(\Gt_2(\R^{2n+2}); \Z\big)$ as rings, we can determine all the other data for all dimensions.

\begin{theorem}\label{gl}
Let $(M, \omega)$ be a compact symplectic manifold admitting an effective 
Hamiltonian $S^1$ action  such that the fixed point set consists of two connected components $X$ and $Y$ with 
$\dim(X)+\dim(Y)=\dim(M)$. Assume $H^*(M; \Z)\cong H^*\big(\Gt_2(\R^{2n+2}); \Z\big)$ as rings. Then
\begin{enumerate}
\item $H^*(X; \Z)\cong H^*(Y; \Z)\cong H^*(\CP^n; \Z)$ as rings,
\item the action is semifree, 
\item $c(M)\cong c\big(\Gt_2(\R^{2n+2})\big)$, and
\item $c(X)$, $c(Y)$, $c(N_X)$ and $c(N_Y)$ are all isomorphic to those in Example~\ref{grass}.
\end{enumerate}
\end{theorem}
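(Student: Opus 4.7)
The plan is to use Morse–Bott theory applied to the moment map $\phi$ to pin down the dimensions, integer cohomology groups, and even-degree cohomology rings of $X$ and $Y$, and then to reduce everything to Theorem~\ref{lg} (in dimension $>4$) or to Proposition~\ref{exy} together with an explicit $4$-dimensional check (in dimension $4$).

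First, $\phi$ is a perfect Morse–Bott function whose only critical submanifolds are $X$ (minimum, index $0$) and $Y$ (maximum, index $\dim(M)-\dim(Y)$). Writing $\dim(X)=2a$, $\dim(Y)=2b$ with $a+b=2n$, perfectness gives $P_M(t) = P_X(t) + t^{2a} P_Y(t)$. Comparing this with the hypothesis $P_M(t) = 1 + t^2 + \cdots + t^{2n-2} + 2t^{2n} + t^{2n+2} + \cdots + t^{4n}$ and invoking Poincaré duality on $X$ and $Y$ will force $a=b=n$; then $P_X(t)=P_Y(t)=1+t^2+\cdots+t^{2n}$ and all odd Betti numbers of $X$ and $Y$ vanish. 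Since the negative normal bundles are complex and $H^*(M;\Z)$ is torsion-free, the Morse–Bott machinery lifts integrally, and the long exact sequence of the pair $(M,M_c)$ for any $\phi(X)<c<\phi(Y)$ collapses into short exact sequences
$$0 \to H^{2i-2n}(Y;\Z) \to H^{2i}(M;\Z) \xrightarrow{r_X} H^{2i}(X;\Z) \to 0,$$
where $r_X$ is restriction. Thus $H^*(X;\Z)$ is torsion-free with $H^{2i}(X;\Z)\cong\Z$ for $0\leq i\leq n$, and $r_X$ is an isomorphism for $0\leq i\leq n-1$.

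Next, letting $x$ generate $H^2(M;\Z)$ and setting $u := r_X(x)$, the low-degree isomorphism shows that $u^i = r_X(x^i)$ generates $H^{2i}(X;\Z)$ for $0\leq i\leq n-1$. Poincaré duality on $X$ then gives a unimodular pairing $H^2(X;\Z)\otimes H^{2n-2}(X;\Z)\to H^{2n}(X;\Z)\cong\Z$, so since $u$ and $u^{n-1}$ are generators on the left, their product $u^n$ must generate $H^{2n}(X;\Z)$. As $u^{n+1}\in H^{2n+2}(X;\Z)=0$, this yields $H^{\mbox{even}}(X;\Z)=\Z[u]/u^{n+1}$, and running the same argument with $-\phi$ gives $H^{\mbox{even}}(Y;\Z)=\Z[v]/v^{n+1}$ with $v := r_Y(x)$. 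Rescaling $\omega$ by a positive real constant does not change any of the cohomology or Chern class data in the statement, so we may assume $[\omega]=x$ is primitive integral, whence $u=[\omega|_X]$, $v=[\omega|_Y]$. For $n>1$, Theorem~\ref{lg} immediately yields (1)--(4). For $n=1$, the above already gives $X\cong Y\cong S^2=\CP^1$ and hence (1); Proposition~\ref{exy} supplies (2) and the Chern classes of $N_X$, $N_Y$; and (3), together with $c(X)$ and $c(Y)$, follows from the explicit ring isomorphism $\Z[x,y]/(x^2-2xy,y^2)\cong\Z[a,b]/(a^2,b^2)$ sending $x\mapsto a+b$, $y\mapsto b$.

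The hard step will be identifying the ring structure of $H^{\mbox{even}}(X;\Z)$ (and of $Y$) at the critical degree $2n$, since the Morse–Bott short exact sequence then has nontrivial kernel $H^0(Y;\Z)=\Z$ and no longer provides an isomorphism. This will be overcome by combining the low-degree surjectivity (indeed, isomorphism) of $r_X$, which supplies generators $u^i$ for $i\leq n-1$, with the unimodularity of Poincaré duality on $X$ to force $u^n = u\cdot u^{n-1}$ to generate; without this combination one could only conclude that $u^n$ is some nonzero integer multiple of a generator.
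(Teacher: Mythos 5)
Your proposal is correct and follows essentially the same route as the paper: the paper's Proposition~\ref{rg-rl} likewise pins down $\dim(X)=\dim(Y)=2n$ from the Betti numbers of $M$ via the perfect Morse--Bott function, uses the restriction isomorphism $H^i(M;\Z)\to H^i(X;\Z)$ for $i<2n$ to transport the subring $\Z[[\omega]]/[\omega]^{n+1}$, and invokes Poincar\'e duality on $X$ exactly as you do to force $u^n=u\cdot u^{n-1}$ to generate $H^{2n}(X;\Z)$, before reducing to Theorem~\ref{lg} when $\dim(M)>4$. The only (immaterial) divergence is in the $4$-dimensional case, where the paper identifies $M$ diffeomorphically with $S^2\times S^2$ and deduces semifreeness from effectiveness and $\codim(X)=\codim(Y)=2$, whereas you appeal to Proposition~\ref{exy} and an explicit ring computation.
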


Let $(M, \omega, J)$ be a compact K\"ahler manifold of complex dimension $n$, and assume $[\omega]$ is an integral class. Then $c_1(M)=n[\omega]$ implies that $M$ is biholomorphic to $\Gt_2(\R^{n+2})$ \cite{KO}. Our main results Theorems~\ref{c1}, \ref{lg} and \ref{gl} and the method in \cite{L} allow us to use various criteria to identify the following Hamiltonian $S^1$-K\"ahler manifold with Example~\ref{grass} in the $S^1$-equivariant complex and symplectic categories.

\begin{theorem}\label{K}
Let $(M, \omega, J)$ be a compact K\"ahler manifold of  complex dimension $2n$ admitting an effective 
holomorphic Hamiltonian $S^1$ action with moment map $\phi$ such that the fixed point set consists of two connected components $X$ and $Y$ with $\dim(X)+\dim(Y)=\dim(M)$. Assume $[\omega]$ is a  primitive  integral class (suitably chosen if $\dim_{\C}(M)=2$).
Then any one of the following conditions implies that
$M$ is $S^1$-equivariantly biholomorphic and symplectomorphic to $\Gt_2(\R^{2n+2})$ as in Example~\ref{grass}.
\begin{enumerate}
\item $c_1(M) = \dim_{\C}(M) [\omega]$.
\item $H^*(M; \Z)\cong H^*\big(\Gt_2(\R^{2n+2}); \Z\big)$ as rings.
\item $H^*(X; \Z)=\Z[u]/u^{\dim_{\C}(X)+1}$  and $H^*(Y; \Z)=\Z[v]/v^{\dim_{\C}(Y)+1}$,
where $u= [\omega|_X]$ and $v= [\omega|_Y]$.
\item $\dim_{\C}(M) =2n > 2$, $H^{\mbox{even}}(X; \Z)=\Z[u]/u^{\dim_{\C}(X)+1}$  and 
$H^{\mbox{even}}(Y; \Z)=\Z[v]/v^{\dim_{\C}(Y)+1}$.
\item $\dim_{\C}(M) =2n > 2$, $H^{\mbox{even}}(X; \Z)=\Z[u]/u^{n+1}$, and the action is semifree.
\item $\dim_{\C}(M) =2n > 2$, $H^{\mbox{even}}(X; \Z)=\Z[u]/u^{n+1}$, and $|\phi(Y)-\phi(X)|=1$.
\end{enumerate}
\end{theorem}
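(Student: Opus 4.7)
The plan is to reduce each of the conditions (2)--(6) to condition (1), namely $c_1(M)=\dim_\C(M)[\omega]$, and then derive the conclusion from (1) by combining the Kobayashi--Ochiai characterization \cite{KO} with the equivariant lifting technique of \cite{L}.

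For the reductions, (2) implies (1) directly from Theorem~\ref{gl}(3), since $c(M)\cong c\bigl(\Gt_2(\R^{2n+2})\bigr)$ yields $c_1(M)=2n[\omega]$. Condition (4) satisfies the hypotheses of Theorem~\ref{lg} (note $\dim_\C X=\tfrac12\dim X$ because $X$ is a complex submanifold of $M$), and conclusion (5) of that theorem gives (1); this also shows (4) implies (2). For (5) and (6), the assumption $H^{\mbox{even}}(X;\Z)=\Z[u]/u^{n+1}$ forces $\dim(X)=2n=\tfrac12\dim(M)$, so Theorem~\ref{c1} applies, and its equivalence of semifreeness, unit moment map length, and $c_1(M)=\tfrac12\dim(M)[\omega]$ converts either (5) or (6) into (1). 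For (3) with $\dim(M)>4$, the even part of the cohomology matches the hypothesis of Theorem~\ref{lg}, so the case of (4) applies. When $\dim(M)=4$, condition (3) forces $X\cong Y\cong S^2$, and Karshon's classification of Hamiltonian $S^1$-actions on compact symplectic $4$-manifolds, combined with the K\"ahler hypothesis, identifies $M$ with $S^2\times S^2=\Gt_2(\R^4)$ and yields $c_1(M)=2[\omega]$ directly.

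With (1) in hand, the primitive integral K\"ahler class $[\omega]$ corresponds via Kodaira's embedding to an ample line bundle $L$ with $c_1(L)=[\omega]$, so $c_1(M)=\dim_\C(M)\,c_1(L)$ is precisely the hypothesis of Kobayashi--Ochiai \cite{KO} with Fano index at least $\dim_\C M$. The index cannot equal $\dim_\C(M)+1$, since $M\cong\CP^{2n}$ would give $c_1(M)=(2n+1)[\omega]$ rather than $2n[\omega]$; hence the index equals $\dim_\C(M)$, and $M$ is biholomorphic to the smooth quadric $Q^{2n}\cong\Gt_2(\R^{2n+2})$, with $[\omega]$ mapping to the primitive integral K\"ahler class of Example~\ref{grass}.

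It remains to upgrade this non-equivariant biholomorphism to an $S^1$-equivariant biholomorphism and symplectomorphism, following \cite{L}. Any holomorphic Hamiltonian $S^1$-action on $Q^{2n}$ is, up to conjugation by a holomorphic automorphism, a subcircle of a maximal torus of $\mathrm{Aut}(Q^{2n})$, and the requirement that the fixed set consist of exactly two components of the prescribed dimensions pins this subcircle down uniquely up to such conjugation, producing an $S^1$-equivariant biholomorphism with Example~\ref{grass}. An equivariant Moser argument, together with the $S^1$-equivariant $\partial\bar\partial$-lemma applied to $S^1$-invariant K\"ahler forms in the fixed primitive integral class, then yields the $S^1$-equivariant symplectomorphism. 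The main obstacle is precisely this last step: Kobayashi--Ochiai supplies only a non-equivariant biholomorphism, so extracting the equivariant identification requires the careful analysis of holomorphic $S^1$-actions on the quadric carried out in \cite{L}.
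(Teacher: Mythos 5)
Your overall skeleton is the same as the paper's: reduce each of (2)--(6) to condition (1) via Theorems~\ref{c1}, \ref{lg} and \ref{gl}, then pass from $c_1(M)=2n[\omega]$ to the quadric, then finish with an equivariant Moser argument; your reductions for (2), (4), (5), (6) and for (3) in dimension $>4$ are exactly the intended ones. The genuine difference is in the middle step. The paper does not go through the non-equivariant Kobayashi--Ochiai theorem and then upgrade: it quotes \cite[Prop.~4.2]{L} (Proposition~\ref{equibiho} here), which already asserts that a compact K\"ahler manifold with holomorphic Hamiltonian circle action and $c_1(M)=\dim_{\C}(M)[\omega]$, $[\omega]$ integral, is $S^1$-\emph{equivariantly} biholomorphic to $\Gt_2(\R^{2n+2})$ with a standard action. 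If you use that citation, your entire paragraph about conjugating the pushed-forward circle into a maximal torus of $\mathrm{Aut}(Q^{2n})$ and pinning down the subcircle by its fixed-point data becomes unnecessary; if you insist on your route, that uniqueness-up-to-conjugation claim is exactly the point that needs proof and cannot just be asserted (this is the content of the analysis in \cite{L}), so as written it is the one real soft spot in your argument, which you yourself flag. For the final step the paper is also a bit more economical than your $\partial\bar\partial$-lemma formulation: after arranging that $\omega$ and $f^*\omega'$ lie in the same class (by rescaling when $b_2(M)=1$, and by adjusting the symplectic class in dimension $4$ --- a normalization you omit and should include), the linear path $\omega_t=(1-t)\omega+tf^*\omega'$ is nondegenerate simply because both forms are positive on $J$-complex lines, and equivariant Moser \cite{Mo} finishes; no $\partial\bar\partial$-lemma is needed. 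Finally, your Karshon-based treatment of the $4$-dimensional case of (3) is an addition not present in the paper (which handles low dimension only through Theorem~\ref{gl}); as stated it is too quick --- what actually rules out the nontrivial ruled surfaces is not the K\"ahler hypothesis but the requirement that $[\omega|_X]$ and $[\omega|_Y]$ both be primitive, forcing the two fixed spheres to have equal area --- so if you keep that detour, make that the argument.
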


In Theorem~\ref{K}, $(1)$ is the condition we will use to make the claim.
The conditions $(1)$, $(2)$ and $(3)$ allow us to make the claim 
for any dimensions.  
By Theorem~\ref{gl}, for the same dimensional manifolds, $(2)$ implies all the other conditions. Clearly, $(3)$ implies $(4)$ for dimension bigger than $4$, and $(3)$ implies $(1)$ for dimension $4$ by Remark~\ref{d4}. By Theorem~\ref{lg}, $(4)$ implies 
$(1)$ and $(5)$ for dimension bigger than $4$. By Theorem~\ref{c1}, $(5)$ and $(6)$ are equivalent, and they imply $(1)$ for dimension bigger than $4$.
  
\begin{remark}\label{mam}
For the case when (\ref{=}) holds, in \cite[Prop. 4.2]{LT}, we showed that condition (\ref{=}) implies that
the even degree cohomology groups of $X$ and $Y$ are all one dimensional. This is not true for the case when
(\ref{a=}) holds. A counter example is:  take $M=\CP^1\times\CP^n$, and let $S^1$ act on $M$ by
$$\lambda\cdot ([z_0, z_1]\times [w_0, w_1, \cdots, w_n])=[z_0, z_1]\times [w_0, \lambda w_1, \cdots, \lambda w_n].$$
The two fixed point components are $X=\CP^1\times\mbox{pt}$ and $Y=\CP^1\times\CP^{n-1}$ satisfying (\ref{a=}).
Hence condition (\ref{a=}) itself does not allow us to weaken the assumptions of our theorems.
\end{remark}

The organization of the paper is as follows. In Section~\ref{P}, we give some preliminary
results for the next sections. In Section~\ref{thm1}, we prove Theorem~\ref{c1}. 
In Sections~\ref{action} -- \ref{sec-thm2}, we assume the condition on the even degree cohomology groups of both fixed point set components as in Theorem~\ref{lg}.  
In Section~\ref{action}, we use
the condition to rule out non-semifree actions. In Section~\ref{sec-euler}, we determine the equivariant Euler classes of the normal bundles of the fixed point set components. This is an important step for the next two sections. In Section~\ref{sec-ring}, we determine the integral cohomology
rings of the fixed point set components. In Section~\ref{sec-totalchern}, we obtain the total Chern classes of the fixed point sets and
of their normal bundles. In Section~\ref{sec-thm2}, we obtain the integral cohomology ring and total Chern class of the
manifold $M$ and prove Theorem~\ref{lg}. In Section~\ref{sec-thm3}, we prove Theorem~\ref{gl}, and 
in Section~\ref{sec-thm4}, we consider the K\"ahler case and prove Theorem~\ref{K}.

\subsubsection*{Acknowledgement}  
The author would like to thank the referee for some comments which help to improve the exposition.

    This work is supported by the NSFC grant K110712116.

\section{some preliminaries}\label{P}
In this section, we state and prove some preliminary results which we will use in the next sections. 
In this paper, we will use equivariant cohomology techniques. We refer to \cite{LT} for  
the basic material and summary of useful facts about $S^1$-equivariant cohomology. 

\medskip

First, let us set up some notations:
\begin{enumerate}
\item  $H^*_{S^1}(M; R)$ --- the $S^1$-equivariant cohomology of the $S^1$-manifold $M$ with coefficient ring $R$.
\item $t$ ---  a generator of $H^2_{S^1}(\mbox{pt}; \Z)=H^2(\CP^{\infty}; \Z)$.
\item $N_X$ --- the normal bundle of a submanifold $X$ in a manifold. 
\item $e^{S^1}(N_F)$ --- the $S^1$-equivariant Euler class of the normal bundle $N_F$ of a fixed point set component 
         $F$ in an $S^1$-manifold.
\item $c^{S^1}(N_F)$ --- the $S^1$-equivariant total Chern class of the normal bundle $N_F$ of a fixed point set component $F$ in an $S^1$-manifold.
\item $c^{S^1}(M)$ --- the $S^1$-equivariant total Chern class of the $S^1$-manifold $M$.
\item $\Gamma_F$ --- the sum of the weights of the $S^1$ action on the normal bundle of the fixed point set component $F$ in an $S^1$-manifold.
\end{enumerate}

The following elementary fact is essential in applications.
\begin{lemma}\label{ut}
Let the circle act  on a compact symplectic manifold $(M,\omega)$
with moment map $\phi \colon M \to \R$, such that $M^{S^1}$ consists
of two connected components $X$ and $Y$. Assume $[\omega]$ is an integral class. 
Then there exists $\ut  \in H_{S^1}^2(M;\Z)$ such that 
$$\ut|_X = [\omega|_X],\,\,\mbox{and}\,\,\,    \ut|_Y = [\omega|_Y]  + \big(\phi(X)-\phi(Y)\big) t.$$
In particular,  $\phi(X) - \phi(Y) \in \Z$. Moreover, if $\Z_k\subsetneq S^1$ is the stabilizer group of some point on $M$,  
then $k\,|\left(\phi(X) - \phi(Y)\right)$.
\end{lemma}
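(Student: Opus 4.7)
The plan is to produce $\ut$ as an equivariant extension of $[\omega]$ in integer cohomology, read off its restrictions to $X$ and $Y$ from the Cartan-model representative in real cohomology, and then obtain the divisibility claim by a $\Z_k$-equivariant restriction argument.

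I will work in the Cartan model: with the convention $\iota_\xi\omega=-d\phi$ for the generating vector field $\xi$, the form $\omega-\phi\,t$ is $S^1$-equivariantly closed and so represents a real equivariant extension $\ut_{\R}\in H^2_{S^1}(M;\R)$ of $[\omega]$, restricting on each fixed component $F\in\{X,Y\}$ to $[\omega|_F]-\phi(F)t$. For an integer lift, I use that $[\omega]$ is integral to pick a Hermitian line bundle $L\to M$ with $c_1(L)=[\omega]$; the Hamiltonian $S^1$-action lifts (uniquely up to a character) to $L$, producing $c_1^{S^1}(L)\in H^2_{S^1}(M;\Z)$ as an integer equivariant extension of $[\omega]$. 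By rational equivariant formality for compact Hamiltonian $S^1$-spaces, the kernel of the forgetful map $H^2_{S^1}(M;\R)\to H^2(M;\R)$ is exactly $\R t$, so $c_1^{S^1}(L)=\ut_{\R}+c\,t$ for some $c\in\R$, which gives $c_1^{S^1}(L)|_F=[\omega|_F]+(c-\phi(F))t$. Integrality of these restrictions forces $c-\phi(X),\,c-\phi(Y)\in\Z$, hence $\phi(X)-\phi(Y)\in\Z$. Setting $\ut:=c_1^{S^1}(L)-(c-\phi(X))t\in H^2_{S^1}(M;\Z)$ then yields $\ut|_X=[\omega|_X]$ and $\ut|_Y=[\omega|_Y]+(\phi(X)-\phi(Y))t$.

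For the divisibility, assume $\Z_k\subset S^1$ fixes a point $p$ that is not already $S^1$-fixed (the non-trivial content of the hypothesis, since every subgroup of $S^1$ fixes $X\cup Y$). Let $F$ be the connected component of $M^{\Z_k}$ containing $p$; this is a closed $S^1$-invariant symplectic submanifold on which $S^1/\Z_k$ acts in Hamiltonian fashion with moment map $\phi|_F$. Because $F\not\subset M^{S^1}$, the distinct extrema of $\phi|_F$ lie in different components of $(X\sqcup Y)\cap F$, so one may pick $q_X\in F\cap X$ and $q_Y\in F\cap Y$. Now restrict $\ut$ along $\Z_k\hookrightarrow S^1$ to $H^2_{\Z_k}(F;\Z)$: since $\Z_k$ acts trivially on the connected $F$ and $H^2(B\Z_k;\Z)=\Z/k$, the evaluation at a point of $F$ sees only the $t$-coefficient reduced mod $k$, and this value is independent of the chosen point. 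Thus the mod-$k$ values $\ut|_{q_X}\equiv 0$ and $\ut|_{q_Y}\equiv \phi(X)-\phi(Y)$ must agree, giving $k\mid(\phi(X)-\phi(Y))$.

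The main obstacle is producing the integer equivariant lift $\ut$. The line-bundle route above is clean but relies on the standard fact that Hamiltonian circle actions lift to prequantum bundles; alternatively, one can invoke Kirwan's integer-coefficient equivariant formality for Hamiltonian actions (which needs a torsion-freeness check). Everything else is routine bookkeeping in the Cartan model and Künneth for $H^*_{\Z_k}(F)$.
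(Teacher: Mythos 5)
Your proof is correct, and it takes a genuinely different route from the paper's. For the existence of $\ut$ and the integrality of $\phi(X)-\phi(Y)$, the paper simply cites \cite[Lemma 2.7]{LT}, whereas you reprove it: the prequantization argument works, with the one delicate point being the lift of the circle action to $L$ --- Kostant's formula only lifts the infinitesimal action, and one must observe that the time-$2\pi$ map is a connection-preserving gauge transformation, hence a constant on connected $M$, which is removed by adding a constant to $\phi$ (note this normalization already gives $\phi(X),\phi(Y)\in\Z$ directly, since the fiber weight at a fixed point is $\phi$ there, so your Cartan-model comparison could be shortened; also, the fact that the kernel of $H^2_{S^1}(M;\R)\to H^2(M;\R)$ is $\R t$ needs no equivariant formality --- it follows from the Serre spectral sequence of $M\to M_{S^1}\to \CP^\infty$ because $H^1(\CP^\infty)=0$). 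For the divisibility, the paper restricts to $M^{\Z_k}$, identifies $S^1/\Z_k\cong S^1$ so that the moment map becomes $\phi/k$, and reapplies the first claim; you instead restrict $\ut$ to the $\Z_k$-equivariant cohomology of the component $F$ of $M^{\Z_k}$ through a $\Z_k$-fixed point outside $M^{S^1}$ and compare point evaluations in $H^2(B\Z_k;\Z)\cong\Z/k$, using that the two point inclusions into $F\times B\Z_k$ are homotopic. This buys some extra care: you make explicit the reading of the hypothesis that is actually used in the paper (a $\Z_k$-fixed point not in $M^{S^1}$; as literally stated the hypothesis is vacuous and the conclusion would fail for semifree actions), you verify that $F$ meets both $X$ and $Y$, and you avoid the issue that $M^{\Z_k}$ may be disconnected and that the fixed set of the induced action on it may have more than two components, which the paper's re-application of the first claim passes over silently. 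Both your argument and the paper's use connectedness of $M$ implicitly, which is harmless since it is assumed throughout.
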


\begin{proof}
For the existence of  $\ut$, see \cite[Lemma 2.7]{LT}. If $\Z_k$ is the stabilizer group of some point, then the submanifold $M^{\Z_k}$ fixed by 
$\Z_k$ contains $X$ and $Y$, and the $S^1/\Z_k\cong S^1$ action on  $M^{\Z_k}$ has moment map
$\phi' = \frac{\phi}{k}$. Apply the first claim on $M^{\Z_k}$ for the $S^1/\Z_k\cong S^1$ action, 
we get $\phi'(X)-\phi'(Y)\in\Z$, which means $k\,|\left(\phi(X) - \phi(Y)\right)$.
\end{proof}

Using Lemma~\ref{ut}, we get the following result on $e^{S^1}(N_X)$, which will be an important tool  in our proofs.

\begin{lemma}\label{mult}
Let $(M, \omega)$ be a compact symplectic manifold admitting a Hamiltonian $S^1$ action with moment map $\phi$ such that $M^{S^1}$ consists of two connected components $X$ and $Y$. Assume $[\omega]$ is an integral class.
Then there exists $\lambda\in H^*_{S^1}(X; \Z)$ such that
$$\lambda e^{S^1}(N_X) = \left([\omega|_X] + t\big(\phi(Y)-\phi(X)\big)\right)^{\frac{1}{2}\dim(Y)+1} .$$
\end{lemma}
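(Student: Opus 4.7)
The plan is to use Lemma~\ref{ut} to build a class $\beta \in H^2_{S^1}(M;\Z)$ whose restriction to $X$ equals $[\omega|_X] + (\phi(Y)-\phi(X))t$ and whose restriction to $Y$ is the plain symplectic class $[\omega|_Y]$, and then to realize the $(\tfrac{1}{2}\dim(Y)+1)$-th power of $\beta$ as a Gysin pushforward from $X$.

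Concretely, set $\beta := \ut + (\phi(Y)-\phi(X))t$. Using $\ut|_X = [\omega|_X]$ and $\ut|_Y = [\omega|_Y]+(\phi(X)-\phi(Y))t$ from Lemma~\ref{ut}, a direct check gives $\beta|_X = [\omega|_X] + (\phi(Y)-\phi(X))t$ and $\beta|_Y = [\omega|_Y]$. Writing $k := \tfrac{1}{2}\dim(Y)+1$, I observe that $\beta^k|_Y = [\omega|_Y]^k$ lies in $H^{\dim(Y)+2}(Y;\Z)\otimes \Z[t] = 0$, since $\dim(Y)$ is the real top dimension of $Y$.

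Next, I would invoke the $S^1$-equivariant Thom--Gysin long exact sequence of the pair $(M, M\setminus X)$,
\begin{equation*}
\cdots \to H^{*-\dim(Y)}_{S^1}(X;\Z) \xrightarrow{(i_X)_*} H^*_{S^1}(M;\Z) \xrightarrow{(i_Y)^*} H^*_{S^1}(Y;\Z) \to \cdots ,
\end{equation*}
where $H^*_{S^1}(M\setminus X;\Z)$ is identified with $H^*_{S^1}(Y;\Z)$ via the equivariant deformation retraction of $M\setminus X$ onto $Y$ furnished by the upward gradient flow of $\phi$ with respect to an $S^1$-invariant Riemannian metric (this retraction is defined on all of $M\setminus X$ because $X$ and $Y$ are the only critical manifolds of $\phi$). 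Since $\beta^k|_Y = 0$, exactness produces some $\lambda \in H^2_{S^1}(X;\Z)$ with $(i_X)_*\lambda = \beta^k$.

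Finally, applying $(i_X)^*$ to this equation and invoking the self-intersection formula $(i_X)^*(i_X)_*\lambda = \lambda \cdot e^{S^1}(N_X)$ gives $\lambda \cdot e^{S^1}(N_X) = \beta^k|_X = \bigl([\omega|_X]+(\phi(Y)-\phi(X))t\bigr)^k$, which is the lemma. The main delicate point I anticipate is the use of the Thom--Gysin sequence with integer coefficients; this rests on the orientability of $N_X$ (automatic from a compatible almost complex structure) together with an $S^1$-equivariant tubular neighborhood theorem, both of which are standard but should be cited carefully.
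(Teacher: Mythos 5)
Your proof is correct and takes essentially the same route as the paper: both build the class $\ut + t\big(\phi(Y)-\phi(X)\big)$ from Lemma~\ref{ut}, observe that its $\left(\frac{1}{2}\dim(Y)+1\right)$-th power restricts to zero on $Y$ for degree reasons, and then extract $\lambda$ from the equivariant long exact sequence of the pair $(M, M\setminus X)$ (the paper's $M^-$, which retracts onto $Y$) together with the Thom isomorphism, the composite of Gysin pushforward and restriction to $X$ being multiplication by $e^{S^1}(N_X)$. The only slip is bookkeeping: the Gysin shift is $\codim(X)$ rather than $\dim(Y)$, so $\lambda$ lies in $H^{\dim(Y)+2-\codim(X)}_{S^1}(X;\Z)$ and not necessarily in $H^2_{S^1}(X;\Z)$ (this lemma does not assume $\dim(X)+\dim(Y)=\dim(M)$), but this does not affect the stated conclusion.
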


\begin{proof}
Since $[\omega]$ is an integral class, by Lemma~\ref{ut}, there exists $\ut\in H^2_{S^1}(M; \Z)$  such that
$$\ut|_X = [\omega|_X] \,\,\mbox{and}\,\, \ut|_Y =[\omega|_Y] + t \big(\phi(X)-\phi(Y)\big).$$ 
So 
\begin{equation}\label{ursy}
\left(\ut + t\big(\phi(Y)-\phi(X)\big)\right)^{\frac{1}{2}\dim(Y)+1}|_Y =0.
\end{equation}
Let $M^- = \{m\in M\,|\, \phi(X) <  \phi(m) \}.$
Consider the long exact sequence for the pair $(M, M^-)$ in equivariant cohomology:
\[\begin{array}{llclclcll}
\cdots & \to & H^*_{S^1}(M, M^-;\Z) & \to & H^*_{S^1}(M; \Z) & \to & H^*_{S^1}(M^-; \Z) & \to\cdots \\
   &      &   \downarrow \cong         &            &   \downarrow        &      &      \downarrow \cong  &      & \\
   &      &   H^{*-\codim(X)}_{S^1}(X;\Z) & \to     &  H^*_{S^1}(X;\Z)  & \to     &      H^*_{S^1}(Y; \Z)    &       &
\end{array}
\]
Here, the first vertical map is the Thom isomorphism, the second vertical map is the restriction, the third
vertical map is an isomorphism since $M^-$ is homotopy equivalent to $Y$, and the first horizontal map on the
second row is multiplication by $e^{S^1}(N_X)$. Combining (\ref{ursy}),  we get that there exists
$\lambda\in H^{\dim(Y)+2-\codim(X)}_{S^1}(X; \Z)$ such that
$$\lambda e^{S^1}(N_X) = \left(\ut + t\big(\phi(Y)-\phi(X)\big)\right)^{\frac{1}{2}\dim(Y)+1}|_X$$
$$=\left([\omega|_X] + t\big(\phi(Y)-\phi(X)\big)\right)^{\frac{1}{2}\dim(Y)+1}.$$
\end{proof}

Next we introduce the localization formula  due to Atiyah-Bott, and Berline-Vergne \cite{AB, BV}.

\begin{theorem}\label{AB.BV}
Let the circle act on a compact oriented manifold $M$. Fix a class $\alpha\in H^*_{S^1}(M; \Q)$. Then as elements of $\Q(t)$,
$$\int_M \alpha = \sum_{F\subset M^{S^1}}\int_F \frac{\alpha|_F}{e^{S^1}(N_F)},$$
where the sum is over all fixed point set components.
\end{theorem}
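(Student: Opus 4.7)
This is the classical Atiyah--Bott, Berline--Vergne localization formula, so my plan would follow the standard Cartan-model argument. I would realize $H^*_{S^1}(M;\Q)$ via $S^1$-invariant differential forms with values in $\Q[t]$ and differential $d_{S^1} = d - t\,\iota_\xi$, where $\xi$ is the fundamental vector field generating the $S^1$ action, and represent $\alpha$ by an equivariantly closed form.

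First I would check that both sides descend to $\Q(t)$-valued functions on $H^*_{S^1}(M;\Q)$. On the left, if $\alpha = d_{S^1}\beta$ then Stokes kills $d\beta$ and the $\iota_\xi$-part carries no top-degree piece, so $\int_M \alpha = 0$. On the right, each $e^{S^1}(N_F)$ has leading $t$-term equal to the product of the nonzero rotation weights on $N_F$, hence is a unit in $H^*_{S^1}(F;\Q)\otimes_{\Q[t]}\Q(t)$, and the same Stokes argument on $F$ handles an exact $\alpha|_F$. So I may assume $\alpha$ is equivariantly closed.

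Next I would produce an explicit equivariant primitive of $\alpha$ off $M^{S^1}$. Pick an $S^1$-invariant Riemannian metric, let $\theta$ be the $1$-form dual to $\xi$, and compute $d_{S^1}\theta = d\theta - t\,\|\xi\|^2$. Since $\|\xi\|^2>0$ outside $M^{S^1}$, one can invert $d_{S^1}\theta$ in the localized ring of forms by expanding as a geometric series in the nilpotent part $d\theta$. Then on $M\setminus M^{S^1}$ the closed form $\alpha$ equals $d_{S^1}(\alpha\wedge\theta/d_{S^1}\theta)$.

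Finally I would localize via cut-offs. Take disjoint invariant tubular neighborhoods $U_F\cong N_F$ of each fixed component $F$ and cut-offs $\chi_\epsilon$ equal to $1$ on $\epsilon$-shrinkings of $U_F$ and supported in $U_F$; writing $\alpha$ as an equivariantly exact piece (which integrates to zero by Stokes) plus a piece concentrated in $\bigcup U_F$, and using the equivariant Thom isomorphism, whose Thom class restricts on $F$ to $e^{S^1}(N_F)$ and has fiber integral $1$, the contribution of $U_F$ in the limit $\epsilon\to 0$ equals $\int_F \alpha|_F/e^{S^1}(N_F)$. The main obstacle is the cut-off and limit argument: the primitive $\alpha\wedge\theta/d_{S^1}\theta$ blows up along $M^{S^1}$, and its rate of blow-up must balance exactly against $d\chi_\epsilon$ so that the boundary residue extracts the inverse of the equivariant Euler class rather than zero or infinity.
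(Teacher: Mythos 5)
The paper offers no proof of Theorem~\ref{AB.BV}: it is quoted as the classical localization formula and attributed to Atiyah--Bott and Berline--Vergne, so there is no internal argument to compare yours against. Your Cartan-model plan is the standard Berline--Vergne proof, and its soft parts are correct: the reduction to equivariantly closed representatives, the observation that all weights on $N_F$ are nonzero so that $e^{S^1}(N_F)$ becomes invertible in $H^*_{S^1}(F;\Q)\otimes_{\Q[t]}\Q(t)$ (note this also equips $N_F$ with the complex structure needed to speak of its Euler class in the general smooth case), and the primitive $\theta\,(d_{S^1}\theta)^{-1}\alpha$ on $M\setminus M^{S^1}$, which works because $d_{S^1}\theta$, hence $(d_{S^1}\theta)^{-1}$, is equivariantly closed.

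The step you flag as the main obstacle is, however, the entire content of the theorem, and as written it is a genuine gap: nothing in your outline shows that the tube around $F$ contributes $\int_F\alpha|_F/e^{S^1}(N_F)$ rather than zero, infinity, or some other class, and appealing to the equivariant Thom isomorphism does not substitute, since the nontrivial fact is precisely that a compactly supported equivariant Thom form exists whose restriction to $F$ is $e^{S^1}(N_F)$ and that the error terms vanish as $\epsilon\to 0$. To finish along your route, apply Stokes on $M\setminus\bigcup_F U_\epsilon(F)$ to reduce everything to the limit of the sphere-bundle integrals of $\theta\,(d_{S^1}\theta)^{-1}\alpha$, replace the tube by the linear model $N_F\to F$ with an invariant metric and connection, split $N_F$ into weight-$m_j$ complex line bundles $L_j$, take for $\theta$ the corresponding angular form, and compute the fibre integral explicitly: the geometric-series expansion of $(d_{S^1}\theta)^{-1}$ integrates over the fibre sphere to $\prod_j\bigl(m_jt+c_1^{S^1}(L_j)\bigr)^{-1}=e^{S^1}(N_F)^{-1}$ as a series in $1/t$ (equivalently, use the Mathai--Quillen representative of the equivariant Thom class). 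Alternatively, you can avoid the analysis entirely by Atiyah--Bott's algebraic argument: prove Borel localization (kernel and cokernel of restriction $H^*_{S^1}(M;\Q)\to H^*_{S^1}(M^{S^1};\Q)$ are $\Q[t]$-torsion), note that torsion classes integrate to zero because $\int_M$ is $\Q[t]$-linear with values in the torsion-free module $\Q[t]$, write $\alpha=\sum_F i_{F*}\bigl(\alpha|_F/e^{S^1}(N_F)\bigr)$ over $\Q(t)$ using the self-intersection formula $i_F^*i_{F*}=e^{S^1}(N_F)\cup{}$, and conclude with the push--pull identity $\int_M i_{F*}\beta=\int_F\beta$.
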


For the integral on $M$ or on $F$,  only the term containing the volume form of the corresponding manifold
contributes to the integral. 

In later sections, when we use  Theorem~\ref{AB.BV}, we will encounter a pure algebraic fact:
\begin{lemma}\label{n-1n}
Let $n\in\N$ and $n\geq 2$. Let $A_n$, $B_{n-1}$ and $C_{n-2}$ be respectively the coefficients of 
$w^n$, $w^{n-1}$ and $w^{n-2}$  in $(1+w+w^2+\cdots + w^n)^{n+1}$. Then
$A_n=2B_{n-1}\neq 0$ and $B_{n-1}\neq 2C_{n-2}\neq 0$.
\end{lemma}

\begin{proof}
Using induction on $k$, we can prove 
$$(1+w+\cdots + w^n +\cdots)^{k+1} = \sum_{n=0}^{\infty}{n+k\choose k} w^n.$$
So
$$A_n = {2n\choose n},\,\, B_{n-1}={2n-1\choose n},\,\, \mbox{and}\,\,\,C_{n-2}={2n-2\choose n}.$$
It is easy to check that for $n \geq 2$, $A_n=2B_{n-1}\neq 0$ and $B_{n-1} > 2C_{n-2}\neq 0$.
\end{proof}

\medskip

Next we state the relation between the total Chern class, the equivariant total  Chern class, and the equivariant Euler class of an $S^1$-(almost) complex vector bundle over a compact manifold. 

\begin{lemma}\label{chern}\cite[Lemma 2.4]{LT}
Let the circle act on a complex vector bundle $E$ of complex rank $d$ over a
compact manifold $X$ so that $E^{S^1} = X$. Assume that there exists
a non-zero $\lambda \in \Z$ so that the circle acts on $E$ with
weight $\lambda$. Then there exists $c_i \in H^{2i}(X;\Z)$ for all
$0\leq i \leq d$ such that
\begin{align*}
c(E) &= 1 + c_1 + \dots + c_{d-1} + c_d, \\
c^{S^1}(E) &= (1 + \lambda t)^d + c_1 (1 + \lambda t)^{d-1}+ \dots + c_{d-1} (1 + \lambda t) +c_d,
\,\, \mbox{and} \\
e^{S^1}(E) &=
(\lambda t)^d + c_1 (\lambda t)^{d-1}+ \dots +  c_{d-1} (\lambda t)
+c_d.
\end{align*}
Here, $c(E)$, $c^{S^1}(E)$, and $e^{S^1}(E)$ are the total Chern
class of $E$, the {\em equivariant} total  Chern class of $E$, and
the  equivariant Euler class of $E$, respectively.
\end{lemma}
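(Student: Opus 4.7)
The plan is to reduce to the split case via the equivariant splitting principle, exploiting the single-weight hypothesis to factor off the equivariant structure as a tensor with a trivial weight-$\lambda$ line bundle. First, since $S^1$ acts trivially on $X$ and on each fiber of $E$ by scalar multiplication with weight $\lambda$, there is an $S^1$-equivariant isomorphism $E \cong E_0 \otimes L_\lambda$, where $L_\lambda$ denotes the trivial complex line bundle over $X$ carrying the weight-$\lambda$ action and $E_0 := E \otimes L_{-\lambda}$ is the same underlying complex bundle equipped with the trivial $S^1$-action. The underlying ordinary complex bundle is unchanged by this twist, so setting $c_i := c_i(E_0) \in H^{2i}(X;\Z)$ we immediately obtain $c(E) = 1 + c_1 + \cdots + c_d$, which is the first assertion.

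Next I would invoke the splitting principle: after pulling back along the flag bundle $p \colon \mathrm{Fl}(E_0) \to X$ (which is injective on cohomology), we may assume $E_0 = L_1 \oplus \cdots \oplus L_d$, so that equivariantly $E \cong L_1(\lambda) \oplus \cdots \oplus L_d(\lambda)$ with $L_i(\lambda) := L_i \otimes L_\lambda$. Writing $x_i := c_1(L_i)$, multiplicativity of the equivariant first Chern class under tensor product together with $c_1^{S^1}(L_\lambda) = \lambda t$ gives $c_1^{S^1}\bigl(L_i(\lambda)\bigr) = x_i + \lambda t$. The Whitney product formula then yields
$$c^{S^1}(E) = \prod_{i=1}^d \bigl((1+\lambda t) + x_i\bigr),$$
and similarly
$$e^{S^1}(E) = \prod_{i=1}^d \bigl(\lambda t + x_i\bigr).$$

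Finally I would expand each product, collecting powers of $(1+\lambda t)$ and of $\lambda t$ respectively; the coefficient of $(1+\lambda t)^{d-k}$ in the first and of $(\lambda t)^{d-k}$ in the second is the $k$-th elementary symmetric polynomial in $x_1,\ldots,x_d$, which by the splitting principle equals $c_k(E_0) = c_k$. This delivers the last two displayed formulas verbatim. No serious obstacle is anticipated: the entire argument is formal once the decomposition $E \cong E_0 \otimes L_\lambda$ is noted, since with $X$ fixed pointwise the equivariant splitting principle reduces to its ordinary non-equivariant version applied to $E_0$, and every equivariant Chern class acquires its $t$-dependence solely from the $L_\lambda$ tensor factor.
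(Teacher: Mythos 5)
Your argument is correct: twisting by the topologically trivial weight-$\lambda$ line bundle to get $E\cong E_0\otimes L_\lambda$ with $E_0$ carrying the trivial action, then applying the splitting principle and the Whitney formula so that every $t$-dependence comes from $c_1^{S^1}(L_\lambda)=\lambda t$, yields exactly the three displayed identities. The paper itself gives no proof (it quotes the lemma from \cite[Lemma 2.4]{LT}), and your proof is essentially the standard argument used there, namely computing the Chern classes of a tensor product of $E_0$ with a line bundle pulled back from $BS^1$, so there is nothing to correct.
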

When we work with $e^{S^1}(N_X)$ in later sections, we will need the following result. 
\begin{lemma}\label{basic}
Let $n>1$ and $n\in\N$, $m\in\N$, $a_0, a_1, \cdots, a_n\in\Z$,   $t$ and  $u$ be variables. If
$$\left(t+\frac{a_0}{m} u\right)(t^n + a_1 t^{n-1}u + \cdots +a_n u^n)=\left(t+\frac{u}{m}\right)^{n+1} \mod u^{n+1}                                   , \,\,\mbox{or} $$
$$t\,\left(t+\frac{a_0}{m} u\right)(t^{n-1} + a_1 t^{n-2}u + \cdots +a_{n-1} u^{n-1})=\left(t+\frac{u}{m}\right)^{n+1} \mod u^{n+1}$$
holds, then $m=1$.
\end{lemma}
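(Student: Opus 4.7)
The plan is to compare coefficients of the displayed identities term by term and exploit integrality of the $a_i$'s. Both hypotheses have the common form
\[
\Bigl(t+\tfrac{a_0}{m}u\Bigr) P(t,u) \;\equiv\; \bigl(t+\tfrac{u}{m}\bigr)^{n+1} \pmod{u^{n+1}},
\]
where $P(t,u) = \sum_{k=0}^n p_k\, t^{n-k} u^k \in \Z[t,u]$ with $p_0=1$, and the only difference between the two cases is that $p_n=a_n$ in the first case while $p_n=0$ in the second. I will handle both cases uniformly; the precise value of $p_n$ never enters the argument decisively.

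Comparing the coefficient of $t^{n+1-k}u^k$ on each side for $k=1,\dots,n$ yields $p_k + (a_0/m)\,p_{k-1} = \binom{n+1}{k}/m^k$. Multiplying by $m^k$ and setting $q_k := m^k p_k$, this becomes the clean integer recursion
\[
q_k + a_0\, q_{k-1} \;=\; \binom{n+1}{k}, \qquad k=1,\dots,n,
\]
with $q_0=1$ and $m^k \mid q_k$ for every $k \geq 1$ (since the $p_k$ are integers). Reducing modulo $m$ and using $m \mid q_k$ and $m \mid q_{k-1}$ for $k \geq 2$, I obtain $m \mid \binom{n+1}{k}$ for all such $k$; the case $k=n$ gives $m \mid n+1$, which combined with $q_1 + a_0 = n+1$ and $m \mid q_1$ forces $m \mid a_0$.

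Writing $a_0 = m\beta$ with $\beta \in \Z$ and substituting back, each coefficient equation sharpens to
\[
p_k + \beta\, p_{k-1} \;=\; \frac{\binom{n+1}{k}}{m^k},
\]
whose left-hand side is now a manifest integer. Specialising to $k=n$ yields $m^n \mid \binom{n+1}{n} = n+1$. Since $2^n > n+1$ for every $n \geq 2$, this divisibility is incompatible with $m \geq 2$, so $m=1$. The step I expect to need the most care is precisely this bootstrap: a single mod-$m$ pass only produces $m \mid n+1$, which is not sharp enough to exclude $m \geq 2$ in small cases; the decisive move is the substitution $a_0 = m\beta$, which turns all coefficient equations into honest integer identities and thereby promotes $m \mid n+1$ to the much stronger $m^n \mid n+1$.
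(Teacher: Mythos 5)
Your proof is correct. It is a genuine refinement of the paper's argument rather than a reproduction of it. The paper extracts only the single coefficient of $tu^n$, which yields $m^{n-1}\mid (n+1)$; this settles $n\geq 4$ at once but leaves the residual possibilities $m=3$ for $n=2$ and $m=2$ for $n=3$, which the paper then rules out by an ad hoc inspection of the coefficients of $u$ and $u^2$. You instead exploit the entire system of coefficient equations $q_k+a_0q_{k-1}=\binom{n+1}{k}$ with $q_k=m^kp_k$: a first pass modulo $m$ gives $m\mid\binom{n+1}{k}$ for $2\leq k\leq n$, hence $m\mid n+1$ and then $m\mid a_0$ from the $k=1$ equation; the substitution $a_0=m\beta$ makes every left-hand side $p_k+\beta p_{k-1}$ an integer and so upgrades the conclusion to $m^k\mid\binom{n+1}{k}$ for all $k$, in particular $m^n\mid n+1$, which is incompatible with $m\geq 2$ for every $n\geq 2$ since $2^n>n+1$. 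The bootstrap via $m\mid a_0$ is exactly what buys you a uniform argument with no low-$n$ case analysis; both proofs are elementary coefficient comparisons of comparable length, and your treatment of the two hypotheses through the single polynomial $P$ (with $p_n=a_n$ or $p_n=0$) is clean and handles both cases simultaneously, as does the paper's.
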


\begin{proof}
Comparing the coefficients of $tu^n$ on both sides of  the equality, we get
$$m^{n-1}|(n+1).$$
So $m=1$ if $n\geq 4$. For $n=2$ and $n=3$, if $m\neq 1$, then $m=3$ and $m=2$ respectively, for these two
possibilities, comparing the coefficients containing $u$ and $u^2$ on both sides of the equality,  we see
that they are not possible. Hence $m=1$ for all $n\geq 2$.
\end{proof}

\section{proof of Theorem~\ref{c1}}\label{thm1}

In this section, we prove Theorem~\ref{c1}.

First, we prove two elementary lemmas.
\begin{lemma}\label{prim}
Let $(M, \omega)$ be a compact symplectic manifold admitting a Hamiltonian $S^1$ action such that $M^{S^1}$ consists of two connected components $X$ and $Y$. If $\codim(Y)>2$, then
$b_2(M)=b_2(X)$, and $[\omega]$ is primitive integral if and only if $[\omega|_X]$ is primitive integral.
Similarly, if $\codim(X)>2$, then  $b_2(M)=b_2(Y)$,  and $[\omega]$ is primitive integral if and only if  $[\omega|_Y]$ is primitive integral.
\end{lemma}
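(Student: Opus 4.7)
The plan is to run Morse--Bott theory on the moment map $\phi$, which has only the two critical manifolds $X$ and $Y$; one of them is the absolute minimum and the other the absolute maximum of $\phi$. Without loss of generality assume $X=\phi^{-1}(\min\phi)$ and $Y=\phi^{-1}(\max\phi)$, so that the symmetric statement for $\codim(X)>2$ follows by swapping the roles of $X$ and $Y$. Since $Y$ is the maximum, the full normal bundle $N_Y$ coincides with the negative normal bundle of $\phi$ along $Y$, so the Morse--Bott index at $Y$ equals $\codim(Y)$. The $S^1$ action endows $N_Y$ with a canonical complex structure, so in particular $N_Y$ is oriented.

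The key step is to identify $H^*(M,X;\Z)$. Morse--Bott theory (equivalently, the fact that $M$ is built from $X$ by attaching the disk bundle $D(N_Y)$ along its sphere bundle $S(N_Y)$), combined with excision and the Thom isomorphism for the oriented real rank $\codim(Y)$ bundle $N_Y\to Y$, gives
$$H^k(M,X;\Z)\;\cong\;H^k\bigl(D(N_Y),S(N_Y);\Z\bigr)\;\cong\;H^{k-\codim(Y)}(Y;\Z).$$
Under the hypothesis $\codim(Y)>2$, both $H^2(M,X;\Z)$ and $H^3(M,X;\Z)$ vanish, since the shifted degrees $2-\codim(Y)$ and $3-\codim(Y)$ are negative. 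The long exact sequence of the pair $(M,X)$ then shows that the restriction map
$$H^2(M;\Z)\;\longrightarrow\;H^2(X;\Z)$$
is an isomorphism. This immediately yields $b_2(M)=b_2(X)$, and since restriction sends $[\omega]$ to $[\omega|_X]$, the integral class $[\omega]$ is primitive in $H^2(M;\Z)$ if and only if $[\omega|_X]$ is primitive in $H^2(X;\Z)$.

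I do not expect any serious obstacle here. The only point requiring attention is that the Thom isomorphism must be applied integrally, which is legitimate because $N_Y$ (and analogously $N_X$) is a complex vector bundle and so canonically oriented. Conceptually, the argument is just the standard Morse--Bott observation that a critical manifold of index at least $4$ cannot influence $H^2$.
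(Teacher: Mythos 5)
Your argument is correct and is essentially the paper's own proof: Morse--Bott theory for $\phi$ with the index of the top component equal to its codimension, yielding that the restriction $H^2(M;\Z)\to H^2(X;\Z)$ is an isomorphism, and then the same argument with $-\phi$ for the claims about $Y$; you have merely spelled out the Thom isomorphism and the long exact sequence of the pair. The only point worth making explicit is that the vanishing of $H^3(M,X;\Z)$ needs $\codim(Y)\geq 4$, not just $\codim(Y)>2$; this is fine because $\codim(Y)$ is even, which you already have from the complex structure on $N_Y$.
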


\begin{proof}
Let $\phi$ be the moment map and assume $\phi(X) < \phi(Y)$.
Using $\phi$ as a Morse-Bott function, $\codim(Y)$ is the Morse index of $Y$. If $\codim(Y) >2$, then the restriction map $H^2(M; \Z)\to H^2(X; \Z)$ is an isomorphism, so $b_2(M)=b_2(X)$, and 
$[\omega]$ is primitive integral if and only if $u=[\omega|_X]$  is primitive integral. Similarly, using $-\phi$, we get the other claims.
\end{proof}

\begin{lemma}\label{halfdim}
Let $(M, \omega)$ be a compact symplectic manifold admitting a Hamiltonian $S^1$ action such that $M^{S^1}$ consists of two connected components $X$ and $Y$ with $\dim(X)+\dim(Y)=\dim(M)$. 
If $\dim(X)\geq \frac{1}{2}\dim(M)$ and $b_{2i}(X)=1$ for all $0\leq 2i\leq\dim(X)$, then $\dim(X) = \frac{1}{2}\dim(M)$.
\end{lemma}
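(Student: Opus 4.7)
The plan is to derive a contradiction from the assumption $\dim(X) > \tfrac{1}{2}\dim(M)$, which under $\dim(X)+\dim(Y)=\dim(M)$ is the same as $\dim(X) > \dim(Y)$. Write $2n=\dim(M)$, $2k=\dim(X)$, and $2\ell=\dim(Y)$, so $k+\ell = n$ and we work under the standing assumption $k>\ell$. Assume without loss of generality that $\phi(X)<\phi(Y)$, so that $X$ is the minimum and $Y$ the maximum of $\phi$.

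The main input is the classical fact that the moment map $\phi$ of a Hamiltonian $S^1$-action on a compact symplectic manifold is a perfect Morse--Bott function, with Morse--Bott indices $0$ at $X$ and $\codim(Y)=2k$ at $Y$. This yields the Poincar\'e polynomial identity
$$P_t(M) \;=\; P_t(X) \;+\; t^{2k}\, P_t(Y).$$
Reading off two coefficients: the coefficient of $t^{2k}$ in $P_t(M)$ is $b_{2k}(X)+b_0(Y)=1+1=2$, since the top class of the closed connected orientable $X$ contributes $1$ and the shifted constant term of $Y$ contributes $1$. The coefficient of $t^{2\ell}$ receives no contribution from $t^{2k}P_t(Y)$ because $2\ell<2k$, while the hypothesis $b_{2i}(X)=1$ for all $0\leq 2i\leq \dim(X)$, applied at $i=\ell$ (legitimate since $2\ell<2k$), gives $b_{2\ell}(M)=b_{2\ell}(X)=1$.

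Finally, Poincar\'e duality on the compact connected orientable manifold $M$ of real dimension $2n=2(k+\ell)$ forces $b_{2k}(M)=b_{2n-2k}(M)=b_{2\ell}(M)$, yielding $2=1$, the desired contradiction. Hence $\dim(X)\leq \dim(Y)$; combined with the hypothesis $\dim(X)\geq \tfrac{1}{2}\dim(M)$ we conclude $\dim(X)=\dim(Y)=\tfrac{1}{2}\dim(M)$. The only nontrivial ingredient is the perfectness of $\phi$ as a Morse--Bott function, which is classical for Hamiltonian $S^1$-actions, so I do not anticipate any genuine obstacle; the rest is bookkeeping with Poincar\'e polynomials.
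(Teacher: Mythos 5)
Your proof is correct and follows essentially the same route as the paper: the perfectness of the moment map as a Morse--Bott function gives $b_{2k}(M)=2$ while $b_{2\ell}(M)=1$ for $2\ell=\dim(M)-2k<2k$, and Poincar\'e duality on $M$ then yields the contradiction. The paper's proof is the same argument, merely phrased with the Betti-number identity $\dim H^i(M)=\dim H^i(X)+\dim H^{i-\codim(Y)}(Y)$ instead of Poincar\'e polynomials.
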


\begin{proof}
 Let $\phi$ be the moment map and assume $\phi(X)<\phi(Y)$. Since $\phi$ is a perfect
 Morse-Bott function, we have
\begin{equation}\label{perfect}
\dim H^i(M) = \dim H^i(X) + \dim H^{i-\codim(Y)}(Y), \,\,\forall \,\, i.
\end{equation}
In our case,  $\codim(Y)=\dim(X)=2k$ for some $k$. So $b_{2i}(X)=1$,  $\forall \,\, 0\leq 2i\leq 2k$ implies that 
\begin{equation}\label{dual}
b_{2i}(M) =1,\,\, \forall \,\, 0\leq 2i < 2k, \,\,\mbox{and}\,\,\, b_{2k}(M)=2.
\end{equation}
 If $\dim(X) > \frac{1}{2}\dim(M)$, then 
$\dim(M)-2k < 2k$. Since $M$ is compact and oriented, by
Poinc\'are duality, $b_{\dim(M)-2k}(M)=b_{2k}(M)=2$, which contradicts  (\ref{dual}). 
\end{proof}

The next result, Proposition~\ref{euler},  is a main step toward proving Theorem~\ref{c1}. 
It is also an important step for determining $e^{S^1}(N_X)$.

\begin{proposition}\label{euler}
Let $(M, \omega)$ be a compact symplectic manifold of dimension bigger than $4$ admitting a Hamiltonian $S^1$ action with moment map $\phi$ such that $M^{S^1}$ consists of two connected components $X$ and $Y$ with
$\dim(X)+\dim(Y)=\dim(M)$ and $\phi(X) < \phi(Y)$. Assume $[\omega]$ is a primitive integral class, $2n=\dim(X)\geq\frac{1}{2}\dim(M)$, and $H^{\mbox{even}}(X; \Z)=\Z[u]/u^{n+1}$, where $u=[\omega|_X]$.
If  the action is semifree,  then
$$\phi(Y)-\phi(X)=1,$$
and there exists $a_0\in\Z$ such that
$$(t+ a_0 u) e^{S^1}(N_X) = (t+u)^{n+1}.$$
 \end{proposition}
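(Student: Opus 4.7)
The plan is to combine the semifree hypothesis with the cohomology assumption on $X$ to extract an explicit polynomial form for $e^{S^1}(N_X)$, and then to massage the identity produced by Lemma~\ref{mult} into the exact shape required by Lemma~\ref{basic}. First, the assumption $H^{\mbox{even}}(X;\Z) = \Z[u]/u^{n+1}$ gives $b_{2i}(X)=1$ for every $0\leq 2i\leq 2n$, so Lemma~\ref{halfdim} forces $\dim(X) = \frac{1}{2}\dim(M) = 2n$ and hence $\dim(Y)=2n$. Because the action is semifree and $X$ is the minimum set of $\phi$, every weight of the $S^1$-representation on the fibres of $N_X$ equals $+1$, so $N_X$ is a complex vector bundle of complex rank $n$ on which $S^1$ acts by scalar multiplication; Lemma~\ref{chern} then gives
$$e^{S^1}(N_X) \;=\; t^n + c_1 t^{n-1} + \cdots + c_n, \qquad c_i=c_i(N_X)\in H^{2i}(X;\Z)=\Z u^i,$$
and writing $c_i = a_i u^i$ with $a_i\in\Z$ yields $e^{S^1}(N_X) = t^n + a_1 u\,t^{n-1} + \cdots + a_n u^n$.

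Set $k := \phi(Y)-\phi(X)$, which is a positive integer by Lemma~\ref{ut}. Lemma~\ref{mult} produces $\lambda\in H^*_{S^1}(X;\Z)$ with $\lambda\, e^{S^1}(N_X) = (u+kt)^{n+1}$ in the ring $H^{\mbox{even}}_{S^1}(X;\Z)=\Z[u,t]/u^{n+1}$ (the equality $H^{\mbox{even}}_{S^1}(X;\Z)=\Z[u,t]/u^{n+1}$ uses the triviality of the action on $X$). Degree counting forces $\deg(\lambda)=2$, and since $H^2_{S^1}(X;\Z)=\Z u\oplus \Z t$ we may write $\lambda = \alpha t + \beta u$ with $\alpha,\beta\in\Z$. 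Comparing the $t^{n+1}$-coefficient of the identity gives $\alpha = k^{n+1}$, and comparing the $u\,t^n$-coefficient gives $\alpha a_1+\beta = (n+1)k^n$, so $\beta = k^n a_0$ for the integer $a_0 := (n+1)-k a_1$. Substituting these back and dividing through by $k^{n+1}$ (working in $\Q[u,t]/u^{n+1}$) brings the identity to
$$\left(t+\frac{a_0}{k}u\right)\bigl(t^n + a_1 u\,t^{n-1}+\cdots+a_n u^n\bigr) \;=\; \left(t+\frac{u}{k}\right)^{n+1}\pmod{u^{n+1}},$$
which is precisely the first scenario of Lemma~\ref{basic} with $m=k$. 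Lemma~\ref{basic} then forces $k=1$, so $\phi(Y)-\phi(X)=1$; plugging $k=1$ back in gives $\lambda = t+a_0 u$ and $(t+a_0 u)\,e^{S^1}(N_X) = (t+u)^{n+1}$, as required.

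The main subtlety is ensuring that the coefficient of $u$ in the rescaled $\lambda/k^{n+1}$ has denominator dividing $k$, so that Lemma~\ref{basic}'s integrality hypothesis is actually satisfied. This divisibility $k^n\mid\beta$ is exactly what the comparison of the top two coefficients delivers; it is not automatic and is the only place where the precise equivariant structure of $e^{S^1}(N_X)$ enters nontrivially. Everything else is a clean combination of the Morse--Bott dimension count (Lemma~\ref{halfdim}), the semifree weight structure, and the Thom-isomorphism identity of Lemma~\ref{mult}.
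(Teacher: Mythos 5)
Your proposal is correct and follows essentially the same route as the paper: Lemma~\ref{halfdim} for the dimension count, the semifree weight-$1$ structure plus Lemma~\ref{chern} to write $e^{S^1}(N_X)=t^n+a_1ut^{n-1}+\cdots+a_nu^n$, the identity from Lemma~\ref{mult}, comparison of the $t^{n+1}$ and $ut^n$ coefficients to get $\lambda=k^{n+1}t+k^na_0u$, and finally Lemma~\ref{basic} to force $k=1$. Your version is, if anything, slightly more explicit than the paper's (e.g.\ the formula $a_0=(n+1)-ka_1$ and the remark on why the divisibility $k^n\mid\beta$ is the crucial integrality point), so no changes are needed.
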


\begin{proof}
By the assumptions and Lemma~\ref{halfdim}, $2n=\dim(X)=\frac{1}{2}\dim(M)=\dim(Y)>2$. Since
$\codim(Y)=\dim(X)>2$, by Lemma~\ref{prim}, $u=[\omega|_X]$  is primitive integral.

Since $[\omega]$ is an integral class,  $\phi(Y)-\phi(X)=m \in \N$.

By Lemma~\ref{mult}, there exists $\lambda\in H^*_{S^1}(X; \Z)$ such that
\begin{equation}\label{lam}
\lambda e^{S^1}(N_X) = \left (mt+u\right)^{n+1}.
\end{equation}
Since the action is semifree, and $\rank_{\C}(N_X) =\frac{1}{2}\dim(Y)=n$, by Lemma~\ref{chern}, and 
the assumption $H^{\mbox{even}}(X; \Z)=\Z[u]/u^{n+1}$, we have
$$e^{S^1}(N_X)  = t^n + a_1 t^{n-1}u + \cdots + a_n u^n, \,\,\mbox{where $a_i \in \Z, \,\forall \,i$}.$$
By degree reasons and by comparing the coefficients of $t^{n+1}$ on both sides of (\ref{lam}),  we may let
$$\lambda =m^{n+1} t+au, \,\mbox{with $a\in\Z$}.$$
Comparing the coefficients of $t^nu$ on both sides of (\ref{lam}),  
we get
$$a=a_0 m^n\,\,\mbox{for some $a_0\in\Z$}.$$ 
So we may write (\ref{lam})  as
\begin{equation}\label{lam2}
\big(t+\frac{a_0}{m}u\big)(t^n + a_1 t^{n-1}u + \cdots + a_n u^n) = \left (t + \frac{u}{m}\right)^{n+1} \mod u^{n+1}.
\end{equation}
 By Lemma~\ref{basic}, $m=1$. Both claims follow.
\end{proof}

To prove Theorem~\ref{c1}, let us also recall the following results.

\begin{lemma}\label{c1L}\cite[Lemma 2.3]{L}
Let the circle act on a connected compact symplectic manifold
$(M, \omega)$ with moment map $\phi\colon M\to\R$. Assume $b_2(M)=1$.
 Then
$$c_1(M) = \frac{ \Gamma_{F} - \Gamma_{F'}}{\phi(F') - \phi(F)} [\omega],$$
where $F$ and $F'$ are any two fixed components such that  $\phi(F') \neq \phi(F)$.
\end{lemma}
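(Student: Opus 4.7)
The plan is to pin down $c_1(M)$ by using the equivariant first Chern class and then restricting to fixed components. Since $b_2(M)=1$ and $[\omega]\neq 0$ in $H^2(M;\R)$, the second cohomology is one-dimensional and spanned by $[\omega]$, so we may write $c_1(M)=\lambda[\omega]$ for a unique $\lambda\in\R$. The task is therefore to identify $\lambda$ with the ratio $(\Gamma_F-\Gamma_{F'})/(\phi(F')-\phi(F))$.

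Because $M$ is a compact Hamiltonian $S^1$-space, it is equivariantly formal, so the forgetful map $H^*_{S^1}(M;\R)\to H^*(M;\R)$ is surjective with kernel $t\cdot H^*_{S^1}(M;\R)$. Let $\ut\in H^2_{S^1}(M;\Z)$ be the equivariant extension of $[\omega]$ furnished by Lemma \ref{ut}, normalized so that $\ut|_F=[\omega|_F]+(c-\phi(F))\,t$ for a single constant $c$ independent of the fixed component $F$. Both $c_1^{S^1}(M)$ and $\lambda\ut$ map to $c_1(M)=\lambda[\omega]$ under the forgetful map, so their difference lies in the kernel; by degree considerations that difference must be of the form $\mu\,t$ with $\mu\in\R$, hence
$$c_1^{S^1}(M)=\lambda\,\ut+\mu\,t.$$

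Now restrict this equality to an arbitrary fixed component $F$. Since $S^1$ acts trivially on $TF$, the equivariant Chern class splits as $c_1^{S^1}(M)|_F=c_1(F)+c_1(N_F)+\Gamma_F\,t$, while $\ut|_F=[\omega|_F]+(c-\phi(F))\,t$. Extracting the coefficient of $t$ in the canonical decomposition $H^*_{S^1}(F;\R)\cong H^*(F;\R)\otimes\R[t]$ (valid because $F$ is fixed), one obtains
$$\Gamma_F=\lambda(c-\phi(F))+\mu.$$
Writing the same identity at $F'$ and subtracting eliminates both $\mu$ and the normalization constant $c$, giving $\Gamma_F-\Gamma_{F'}=\lambda(\phi(F')-\phi(F))$, which rearranges to the asserted formula. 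The only conceptual step is the equivariant-formality argument producing the decomposition of $c_1^{S^1}(M)$ as $\lambda\ut+\mu t$; once that is in hand, the result drops out of elementary bookkeeping at any pair of fixed components with distinct moment-map values.
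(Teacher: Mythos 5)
Your argument is correct, and it is essentially the standard proof of this lemma; the paper itself does not reprove it but cites \cite[Lemma 2.3]{L}, whose argument runs along the same lines as yours (write $c_1(M)=\lambda[\omega]$ using $b_2(M)=1$, compare $c_1^{S^1}(M)$ with $\lambda\ut$ modulo the ideal $(t)$ via equivariant formality, and read off the coefficients of $t$ at two fixed components, using $c_1^{S^1}(M)|_F=c_1(F)+c_1(N_F)+\Gamma_F t$). One small point of hygiene: Lemma~\ref{ut} of this paper is stated only for exactly two fixed components and an integral class, whereas the lemma you are proving allows an arbitrary fixed point set; in the general case you should instead invoke the Cartan-model class $[\omega-\phi t]\in H^2_{S^1}(M;\R)$, which is defined without any integrality hypothesis, restricts to $[\omega|_F]-\phi(F)t$ on every fixed component, and plays exactly the role of your $\ut$ with $c=0$.
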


\begin{proposition}\label{<6}\cite[Proposition 7.5]{LT}
Let $(M, \omega)$ be a compact symplectic manifold admitting an effective Hamiltonian $S^1$ action with moment map $\phi$ such that $M^{S^1}$ consists of two connected components $X$ and $Y$ with $\phi(X)<\phi(Y)$. Then
the set of distinct weights of the $S^1$ action on the normal bundles $N_X$ of $X$ and $N_Y$ of $Y$ are respectively  $\{1, 2, \cdots, N\}$ and $\{-1, -2, \cdots, -N\}$ for some $N\in\N$.
\end{proposition}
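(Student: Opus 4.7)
The plan is to exploit the rigidity of having exactly two fixed components by analyzing the subgroup-fixed submanifolds $M^{\Z_k}$. Since $\phi$ attains its minimum on $X$ and its maximum on $Y$, every weight of the $S^1$-representation on $N_X$ is a positive integer and every weight on $N_Y$ is negative; write $W_X,-W_Y\subset\Z_{>0}$ for the sets of distinct positive values. The goal is to show $W_X=-W_Y=\{1,2,\ldots,N\}$ for some $N\geq 1$.

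The key structural observation is that for each integer $k\geq 2$, $M^{\Z_k}$ is a closed $S^1$-invariant symplectic submanifold, and each of its connected components inherits a Hamiltonian $S^1$-action whose fixed set is contained in $X\cup Y$. Since any nontrivial Hamiltonian $S^1$-action on a compact connected symplectic manifold has at least two distinct fixed components (the moment map attains both a minimum and a maximum), each connected component of $M^{\Z_k}$ either equals $X$ alone, equals $Y$ alone, or contains both $X$ and $Y$. In particular, whenever $k$ divides some weight in $W_X$, the component of $M^{\Z_k}$ through $X$ is a nontrivial two-fixed-component submanifold also containing $Y$.

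I would first use this to match the largest weight. Let $N=\max W_X$. The component $N'$ of $M^{\Z_N}$ through $X$ strictly contains $X$ (the weight-$N$ subbundle of $N_X$ is nontrivial), so by the dichotomy $Y\subset N'$. The induced $S^1/\Z_N\cong S^1$-action on $N'$ has only weight $1$ at $X$ (since $N$ is the only multiple of $N$ appearing in $W_X$), so the quotient action is semifree near $X$. Applying the analogous analysis to the quotient action at $Y$ in $N'$ forces its unique weight there to be $-1$, yielding $-N\in W_Y$. Symmetrically $\max(-W_Y)\in W_X$, so the top absolute weight is common.

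The main obstacle is consecutivity: every $1\leq k\leq N$ must lie in $W_X$. I plan to argue by induction on an appropriate well-founded quantity (for instance $\dim M$ or $|W_X|$), applying the proposition to the smaller two-fixed-component manifolds arising from the components of $M^{\Z_k}$. For each $k\in W_X$, the component of $M^{\Z_k}$ through both $X$ and $Y$ yields, via the quotient $S^1/\Z_k$-action, a smaller two-fixed-component Hamiltonian $S^1$-manifold to which the inductive hypothesis applies; this reveals the multiples of $k$ inside $W_X$ and $-W_Y$. The combinatorial challenge, and the hardest part, is to patch these partial descriptions across all $k$ and rule out ``gaps'' like $\{2,3\}$ in $W_X$ without $1$. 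A cleaner alternative I would pursue in parallel is via symplectic reduction: for any regular value $r\in(\phi(X),\phi(Y))$, the reduced orbifold $\phi^{-1}(r)/S^1$ is independent of $r$ (there are no intermediate fixed components, hence no wall-crossing), and is realized both as the weighted projectivization $\P(N_X,W_X)$ near $\phi(X)$ and as $\P(N_Y,-W_Y)$ near $\phi(Y)$. Matching the isotropy stratifications of these two presentations forces $W_X=-W_Y$ and, through the combinatorics of weighted projective orbifold strata, pins them down to an initial segment of the positive integers.
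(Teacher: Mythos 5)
This proposition is not proved in the paper at all: it is imported verbatim from \cite[Proposition 7.5]{LT}, so there is no in-paper argument to compare against and your attempt has to stand on its own. Your setup is the standard and correct one: the weights at the minimum $X$ are positive and at the maximum $Y$ are negative, and any connected component of $M^{\Z_k}$ that meets $X$ and is strictly larger than $X$ must also contain $Y$, because its induced Hamiltonian action needs both a minimum and a maximum inside $X\cup Y$. From this dichotomy you can legitimately extract that the largest absolute weights at $X$ and at $Y$ coincide, and that every weight at one component divides some weight at the other.

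The genuine gap is consecutivity --- exactly the part you flag as ``the hardest part'' and then leave open --- and neither of your two strategies is capable of closing it. The induction over components of $M^{\Z_k}$ only ever sees the sub-multiset of weights divisible by $k$, so it is structurally blind to a gapped configuration such as $W_X=-W_Y=\{2,3\}$: this set has $\gcd$ equal to $1$ (so effectiveness is not violated), the $M^{\Z_2}$- and $M^{\Z_3}$-analyses each return the single quotient weight $1$ and look perfectly consistent, and no choice of $k$ ever interrogates whether $1\in W_X$. The reduction picture has the same defect: $\P(N_X,W_X)\cong\P(N_Y,-W_Y)$ with $W_X=-W_Y=\{2,3\}$ matches isotropy stratifications exactly, so ``the combinatorics of weighted projective orbifold strata'' cannot by itself force an initial segment $\{1,\dots,N\}$. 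Excluding such gapped weight sets is the actual content of the proposition and requires an input you have not identified; even the Hattori/localization identity equating the total multiplicity of the weight $k$ with that of $-k$ (which here gives $W_X=-W_Y$ with multiplicities) still does not rule out $\{2,3\}$. A smaller wrinkle: your deduction that the quotient action at $Y$ inside the component of $M^{\Z_N}$ has ``unique weight $-1$'' presupposes $\max(-W_Y)\le N$, which you only establish afterwards by symmetry, so those two steps need to be reordered. As written, the proposal is an honest plan whose decisive step is missing.
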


Now we are ready to prove Theorem~\ref{c1}.

\begin{proof}[Proof of Theorem~\ref{c1}]
 With no loss of generality, assume $\phi(X) < \phi(Y)$.
 
By Lemma~\ref{halfdim}, $\dim(X)=\frac{1}{2}\dim(M)$. The fact $\codim(Y)>2$ and Lemma~\ref{prim} imply
that $b_2(M)=b_2(X)=1$.

By Proposition~\ref{euler}, $(1)\Longrightarrow (2)$. Conversely, if there exists any nontrivial finite stabilizer, then by
Lemma~\ref{ut}, $\phi(Y)-\phi(X) > 1$. Hence $(2)\Longrightarrow (1)$.

If the action is semifree, then $\Gamma_X=\rank_{\C}(N_X)= \frac{1}{2}\dim(Y)$,
and similarly $\Gamma_Y= - \frac{1}{2}\dim(X)$. Moreover, by the last step, $\phi(Y)-\phi(X) = 1$.  Then $(3)$ follows from Lemma~\ref{c1L}. This shows $(1)\Longrightarrow (3)$.

To prove $(3)\Longrightarrow (1)$, we only need $b_2(X)=b_2(M)=1$. Assume the action is not semifree. By Propositions~\ref{1+2} and \ref{<6}, $\dim(X)=\dim(Y)$, and the set of distinct weights on $N_X$ is $\{1, 2, \cdots, N\}$ for some $N > 1$. Let $m_i$ be the multiplicity of the weight $i$ on $N_X$. Then 
$\sum_i m_i =\rank_{\C}(N_X)=\frac{1}{2}\dim(Y)$, and
$$\Gamma_X = Nm_{N} +(N-1)m_{N-1}+\cdots 1\cdot m_1 < N \frac{1}{2}\dim(Y),\,\, \mbox{and}\,\,\,  \Gamma_Y = -\Gamma_X.$$ 
By Lemma~\ref{ut},  $\phi(Y)-\phi(X)\geq N\cdot (N-1)$. By Lemma~\ref{c1L}, 
$$c_1(M)=\frac{\Gamma_X -\Gamma_Y}{\phi(Y)-\phi(X)}[\omega]=\frac{2\Gamma_X}{\phi(Y)-\phi(X)}[\omega],$$
and by the information above, 
$$\frac{2\Gamma_X}{\phi(Y)-\phi(X)} < \dim(Y) = \frac{1}{2}\dim(M),$$
contradicting to $c_1(M)=\frac{1}{2}\dim(M)[\omega]$.
\end{proof}

\section{determining the action}\label{action}

In this section, under a cohomology condition on both fixed point set components,
we prove that non-semifree actions do not exist. 
\smallskip

First, let us look at what stabilizer groups can occur.
\begin{proposition}\label{1+2}\cite[Lemma 7.1 and Proposition 7.9]{LT}
Let $(M, \omega)$ be a compact symplectic manifold admitting an effective Hamiltonian $S^1$ action with $M^{S^1}$ consisting of two connected components $X$ and $Y$. Suppose that the action is not semifree. Then
\begin{enumerate}
\item $\dim(X)=\dim(Y)$, and
\item if additionally $b_{2i}(X)=1$ for all $0\leq 2i\leq\dim(X)$, 
then the only finite stabilizer groups are $1$ and $\Z_2$, and $\dim(M^{\Z_2})-\dim(X)=2$ or $\dim(M)-\dim(M^{\Z_2}) =2$ or both.
\end{enumerate}
\end{proposition}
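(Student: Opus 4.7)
The plan is to tackle the two parts separately, in both cases exploiting the structure of the $\Z_k$-fixed submanifolds of $M$.

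For (1), suppose the action is not semifree and let $\Z_k\subseteq S^1$, $k\geq 2$, be the stabilizer of some non-$S^1$-fixed point. Because $X$ and $Y$ are $S^1$-fixed they are $\Z_k$-fixed, so each lies in a connected component of $M^{\Z_k}$. Pick a component $N$ of $M^{\Z_k}$ containing a point with stabilizer exactly $\Z_k$; then $N$ is a closed symplectic $S^1$-invariant submanifold, and the induced $S^1/\Z_k\cong S^1$ action on $N$ is Hamiltonian with moment map $\phi|_N/k$. Since a Hamiltonian circle action on a compact connected manifold attains its extrema on $S^1$-fixed components and $N^{S^1}\subseteq X\cup Y$, both $X$ and $Y$ meet $N$, and by connectedness of $X$ and $Y$ both are contained in $N$. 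Thus $N$ is itself a compact Hamiltonian $S^1$-manifold with exactly two fixed components $X$ and $Y$, and comparing the local normal-bundle descriptions at $X$ and at $Y$ gives
\[
\dim X+2a_k=\dim N=\dim Y+2b_k,
\]
where $a_k$ (resp.\ $b_k$) is the total multiplicity of weights on $N_X$ (resp.\ $N_Y$) divisible by $k$. To conclude $a_k=b_k$, I would apply Theorem~\ref{AB.BV} on $N$ to the class $1\in H^0_{S^1}(N;\Q)$: the two residue contributions must cancel as rational functions of $t$, and using Proposition~\ref{<6} to describe the set of distinct weights at each end, matching of successive coefficients in the $1/t$-expansion should yield $a_k=b_k$ and hence $\dim X=\dim Y$.

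For (2), assume $b_{2i}(X)=1$ for all $0\leq 2i\leq\dim X$, so $H^{\mbox{even}}_{S^1}(X;\Z)$ is a quotient of $\Z[u,t]$. Lemma~\ref{mult} gives, with $m=\phi(Y)-\phi(X)$ and $n_Y=\tfrac12\dim Y$,
\[
\lambda\cdot e^{S^1}(N_X)=(u+mt)^{n_Y+1}\quad\mbox{in}\quad\Z[u,t]/u^{n_X+1},
\]
while Lemma~\ref{chern} (applied to each weight subbundle) lets me write $e^{S^1}(N_X)=\prod_{i=1}^{n_X}(w_it+\alpha_iu)$ with $w_i$ the weights on $N_X$ and $\alpha_i\in\Z$. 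Expanding both sides and comparing coefficients of $t^{n_Y+1-j}u^j$ for small $j$, combined with part (1) (so $n_X=n_Y$ in the non-semifree case), constrains the weights. The coefficient of $t^{n_Y+1}$ forces $\lambda_0\prod w_i=m^{n_Y+1}$ for some positive integer $\lambda_0$, giving $\prod w_i\mid m^{n_Y+1}$. A more delicate comparison in the spirit of Lemma~\ref{basic}, applied after grouping equal weights, should force every $w_i\in\{1,2\}$ and $m\in\{1,2\}$. By Lemma~\ref{ut} the only finite stabilizers that can occur are then $\{1\}$ and $\Z_2$. For the codimension assertion, apply the same reasoning to a connected component $N_2$ of $M^{\Z_2}$ containing $X$ and $Y$: its dimension satisfies $\dim N_2-\dim X=2a_2$ and $\dim M-\dim N_2=2(n_X-a_2)$, where $a_2$ counts weights equal to $2$ at $X$. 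The factorization identity above should permit at most one weight-$2$ factor at $X$ and at most one at $Y$, giving the claimed codimensions.

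The main obstacle is the coefficient analysis in (2): extracting the sharp bounds $w_i\in\{1,2\}$, $m\in\{1,2\}$, and $a_2\leq 1$ from the single identity $\lambda\cdot\prod(w_it+\alpha_iu)\equiv(u+mt)^{n_Y+1}\pmod{u^{n_X+1}}$ requires running the Lemma~\ref{basic}-style argument simultaneously at several orders of $u$ and correctly handling the mixed-weight factorization. The step in (1) deriving $a_k=b_k$ from Theorem~\ref{AB.BV} and Proposition~\ref{<6} is also nontrivial, since a naive leading-order balance of the localization identity sees only products of weights and not their individual multiplicities; I expect to need several orders of the $1/t$-expansion simultaneously to extract the multiplicity information.
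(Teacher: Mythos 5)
The paper does not actually prove this proposition; it is imported verbatim from \cite[Lemma 7.1 and Proposition 7.9]{LT}, so your attempt has to be judged against what a complete proof requires, and as it stands it has genuine gaps in both parts. In part (1), your construction of the component $N$ of $M^{\Z_k}$ containing both $X$ and $Y$ is fine (with the small extra observation that $\phi|_N$ cannot be constant, else the action on $N$ would be trivial), but the decisive step fails: applying Theorem~\ref{AB.BV} to the class $1$ on $N$ does not produce a $1/t$-expansion with several orders to compare. For a fixed component $F\subset N$ one has $1/e^{S^1}(N^N_F)=\sum_{j\ge 0}y_j\,t^{-r_F-j}$ with $y_j\in H^{2j}(F;\Q)$ and $r_F=\rank_{\C}N^N_F$, so $\int_F 1/e^{S^1}(N^N_F)$ is a \emph{single} Laurent monomial in $t$, of exponent $-(r_F+\tfrac12\dim F)=-\tfrac12\dim N$; the exponents at $X$ and $Y$ agree automatically because $\dim X+2a_k=\dim N=\dim Y+2b_k$, so the localization identity is one scalar equation relating two integrals and carries no information about $a_k$ versus $b_k$. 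Proposition~\ref{<6} only controls the \emph{set} of distinct weights, not their multiplicities, so it cannot close this gap either. Thus the claim $\dim X=\dim Y$ — which is the actual content of \cite[Lemma 7.1]{LT} — is not proved by your argument; note also that the dimension equality genuinely needs the non-semifree hypothesis (a semifree action on $\CP^n$ has fixed components a point and a $\CP^{n-1}$), and nothing in your localization step uses it.

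In part (2) the hypothesis is only $b_{2i}(X)=1$, which yields $H^{\mbox{even}}(X;\Q)=\Q[u]/u^{n_X+1}$ but not $H^{\mbox{even}}(X;\Z)=\Z[u]/u^{n_X+1}$; so the integrality of your coefficients $\alpha_i$, and a fortiori the factorization $e^{S^1}(N_X)=\prod_i(w_it+\alpha_iu)$ (a weight subbundle of rank $>1$ does not contribute linear factors), are not justified — one must work with the Lemma~\ref{chern}-type expansions of each weight subbundle over $\Q$. More seriously, the entire coefficient analysis that is supposed to force $w_i\in\{1,2\}$ is deferred (``should force'', ``should permit''), and the intended endgame is aimed at the wrong statement: you plan to show there is at most one weight-$2$ direction at $X$, which would give $\dim(M^{\Z_2})-\dim(X)=2$ unconditionally, whereas the proposition asserts only the disjunction $\dim(M^{\Z_2})-\dim(X)=2$ \emph{or} $\dim(M)-\dim(M^{\Z_2})=2$ (the latter meaning exactly one weight equal to $1$ at $X$), precisely because the number of weight-$2$ directions cannot in general be bounded by $1$. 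Finally, keep in mind that this proposition does not assume $\dim(X)+\dim(Y)=\dim(M)$, so in the identity from Lemma~\ref{mult} the degree of $e^{S^1}(N_X)$ bears no a priori relation to the truncation $u^{n_X+1}$, which makes the ``Lemma~\ref{basic}-style'' comparison considerably more delicate than in Proposition~\ref{euler}; as written, the proposal does not supply the argument that the cited Lemma 7.1 and Proposition 7.9 of \cite{LT} actually provide.
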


The next fact was implied by \cite[Lemma 7.6]{LT}, but was stated differently. We state it as follows, and will use it
twice.
 
\begin{lemma}\label{bothprim}
Let $(M, \omega)$ be a compact symplectic manifold admitting a Hamiltonian $S^1$ action such that $M^{S^1}$ consists of two connected components $X$ and $Y$ such that $\dim(M)-\dim(X)=\dim(M)-\dim(Y)=2$. Assume $b_2(X)=b_2(Y)=1$,  $[\omega|_X]$ and $[\omega|_Y]$ are both primitive integral.
Then $c_1(N_X)=c_1(N_Y)=0$.
\end{lemma}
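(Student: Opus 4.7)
The plan is to apply the Atiyah--Bott--Berline--Vergne localization formula to two degree-two equivariant classes on $M$, reducing the vanishing statement to a short algebraic identity.

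First, I would identify the equivariant Euler classes. Since $\codim_\R X = \codim_\R Y = 2$, both $N_X$ and $N_Y$ are complex line bundles, so the $S^1$-action on each carries a single weight. Because $M^{S^1}$ has exactly two components, Proposition~\ref{<6} pins these weights as $+1$ on $N_X$ and $-1$ on $N_Y$, and in particular the action is semifree. By Lemma~\ref{chern},
\[
e^{S^1}(N_X) = t + c_1(N_X), \qquad e^{S^1}(N_Y) = -t + c_1(N_Y).
\]
The hypotheses $b_2(X) = b_2(Y) = 1$ together with the primitivity of $u = [\omega|_X]$ and $v = [\omega|_Y]$ let one introduce integers $k, l \in \Z$ by $c_1(N_X) = k u$ and $c_1(N_Y) = l v$; the goal is then $k = l = 0$.

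Next, set $m = \phi(Y) - \phi(X) > 0$ (WLOG). Two applications of Lemma~\ref{ut} (after interchanging the roles of $X$ and $Y$ in the second) furnish equivariant extensions $\tilde u, \tilde v \in H^2_{S^1}(M; \Z)$ with
\[
\tilde u|_X = u, \ \ \tilde u|_Y = v - m t, \qquad \tilde v|_X = u + m t, \ \ \tilde v|_Y = v.
\]
Since each has equivariant degree $2 < \dim M$, both $\int_M \tilde u$ and $\int_M \tilde v$ vanish in $H^*_{S^1}(\mathrm{pt})$. Feeding these integrals into Theorem~\ref{AB.BV}, I would expand the two rational expressions as geometric series in $u/t$ and $v/t$; the nilpotency relations $u^{\dim(X)/2 + 1} = 0 = v^{\dim(Y)/2 + 1}$ truncate the series cleanly. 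In the dimension relevant to the present paper ($\dim M = 4$, so $\dim X = \dim Y = 2$), the primitivity of $u$ and $v$ forces $\int_X u = \int_Y v = 1$, and the two localization identities collapse to $m l = 0$ and $m k = 0$; since $m > 0$, this forces $k = l = 0$.

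The main bookkeeping subtlety I anticipate is keeping track of which terms of the geometric series survive the nilpotency cutoffs once the complex dimension of $X$ and $Y$ exceeds $1$. In higher dimensions, the same two integrals yield only the weaker relation $(1 - m k)(1 - m l) = 1$, which admits spurious solutions such as $(k, l, m) = (1, 1, 2)$; eliminating these would require the auxiliary localization identities from $\int_M \tilde u^j = 0$ for intermediate $j$ together with the positivity $\int_M [\omega]^{\dim M/2} > 0$. This is the content of \cite[Lemma~7.6]{LT}, of which the present statement is a convenient reformulation.
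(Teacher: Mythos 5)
Your dimension-4 computation is correct, and the small caveats there (Proposition~\ref{<6} needs effectiveness, which you can arrange by quotienting the generic stabilizer; writing $c_1(N_X)=ku$ silently discards a possible torsion part of $H^2(X;\Z)$, which no integration argument can see) are minor. Note, however, that the paper offers no internal proof of this lemma at all: it is stated as a reformulation of \cite[Lemma 7.6]{LT}, so the real question is whether your argument stands on its own. It does not, because your premise that $\dim M=4$ is ``the dimension relevant to the present paper'' is wrong: in the proof of Lemma~\ref{12} the lemma is applied to the $\Z_2$-fixed submanifold $M^{\Z_2}$, of dimension $2n+2\geq 6$, whose two fixed components $X$ and $Y$ have codimension $2$ there (the genuinely $4$-dimensional application occurs only in Proposition~\ref{exy} with $n=1$). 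So the general-dimensional case is exactly the one the paper needs, and that is the case your argument leaves open.

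Moreover, your proposed repair of the general case does not go through as sketched. As you note, the two degree-$2$ identities give only $(1-mk)(1-ml)=1$, i.e.\ either $k=l=0$ or $mk=ml=2$. But the auxiliary vanishing identities you invoke do not kill the second branch: for instance with $\dim X=\dim Y=4$, $m=2$, $k=l=1$, the remaining identities $\int_M\tilde u^2=\int_M\tilde u\tilde v=0$ (and those involving the degree-$2$ class supported on $Y$) reduce to $\int_X u^2=\int_Y v^2$, which is no contradiction, while the top-degree integrals such as $\int_M[\omega]^{\dim M/2}$ come out positive automatically. What actually eliminates this branch --- and what \cite[Lemma 7.6]{LT} encodes --- is the Duistermaat--Heckman variation of the reduced class: since both extrema have codimension $2$, every value in $(\phi(X),\phi(Y))$ is regular, all reduced spaces are identified with $X$, and $[\omega_{red}]$ varies linearly with slope $\pm c_1(N_X)=\pm ku$ from $u$ at one end to (the class identified with) $v$ at the other; if $mk=2$ the endpoint class would be $(1\mp mk)u$, which either violates primitivity of $v$ or equals $-u$, in which case the reduced class would vanish at an intermediate regular value, contradicting nondegeneracy. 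Hence $k=0$, and symmetrically $l=0$. So either supply this reduced-space argument or do as the paper does and cite \cite[Lemma 7.6]{LT} outright; as written, your proof establishes only the $4$-dimensional instance, and the sketch for the higher-dimensional cases the paper actually uses does not close.
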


Next, in Lemmas~\ref{12} and \ref{4t}, assuming a cohomology condition on one fixed set component,  
we determine the equivariant Euler class of its normal bundle.

\begin{lemma}\label{12}
Let $(M, \omega)$ be a compact symplectic manifold  admitting an effective non-semifree Hamiltonian $S^1$ action with moment map $\phi$ such that $M^{S^1}$ consists of two connected components $X$ and $Y$
with $\dim(X)+\dim(Y)=\dim(M)$ and $\phi(X) <\phi(Y)$. Assume $[\omega]$ is a primitive integral class, and
$H^{\mbox{even}}(X; \Z)=\Z[u]/u^{n+1}$, where $u=[\omega|_X]$  and $2n=\dim(X)$.  
Then $\dim(X)=\dim(Y)=2n\geq 6$,  the only finite stabilizer groups are $1$ and $\Z_2$, and 
\begin{equation}\label{a0}
  \dim(M^{\Z_2})-\dim(X) > 2,\,\,  \dim(M)-\dim(M^{\Z_2}) =2,
\end{equation}
\begin{equation}\label{b0}
\phi(Y)-\phi(X) = 2,\,\,\mbox{and}\,\,\, e^{S^1}\big(N_{M^{\Z_2}}\big)|_X = t+u.
\end{equation}
\end{lemma}

\begin{proof}
Since the action is not semifree, by Proposition~\ref{1+2}, $\dim(X)=\dim(Y)=\codim(Y)=2n$ for some $n\geq 2$. 
By Lemma~\ref{prim}, $b_2(X)=b_2(M)=b_2(Y)=1$,
$u=[\omega|_X]$ and $v=[\omega|_Y]$ are  primitive integral. After (\ref{a0}) is shown, we get $2n\geq 6$.

Since $b_{2i}(X) =1$ for all $0\leq 2i\leq 2n$, by Proposition~\ref{1+2}, the only finite stabilizer groups are $1$ and $\Z_2$. Since $[\omega]$ is integral, $m=\phi(Y)-\phi(X)\in\N$. Since there is $\Z_2$ stabilizer, by Lemma~\ref{ut},
\begin{equation}\label{2|}
  2\,|\, m.
\end{equation}

 Assume instead $\dim(M^{\Z_2})-\dim(X)=2$. Then by Lemmas~\ref{bothprim} and \ref{chern},
 $$e^{S^1}(N_X^{M^{\Z_2}}) = 2t,$$ 
where $N_X^{M^{\Z_2}}$ is the normal bundle of $X$ in $M^{\Z_2}$.
The action on $N_{M^{\Z_2}}|_X$ is semifree, and  $\rank_{\C}\big(N_{M^{\Z_2}}|_X\big) = n-1$. 
Since $H^{\mbox{even}}(X; \Z)=\Z[u]/u^{n+1}$, by Lemma~\ref{chern}, we may write
$$e^{S^1}(N_{M^{\Z_2}})|_X = t^{n-1} + a_1t^{n-2}u + \cdots + a_{n-1}u^{n-1},\,\,\mbox{with $a_i\in\Z,\,\forall\, i$}.$$
We have
\begin{equation}\label{2normal}
e^{S^1}(N_X)=e^{S^1}(N_X^{M^{\Z_2}})e^{S^1}(N_{M^{\Z_2}})|_X.
\end{equation}
By Lemma~\ref{mult}, there exist  $a, b\in\Z$ such that
$$(at + bu)\cdot 2t \cdot (t^{n-1} + a_1t^{n-2}u + \cdots + a_{n-1}u^{n-1}) = (mt+u)^{n+1}.$$
Comparing the coefficients of $t^{n+1}$ and $t^nu$ on both sides, we get that there exists $a_0\in\Z$ such that
$$t\,\left(t + \frac{a_0}{m} u\right) (t^{n-1} + a_1t^{n-2}u + \cdots + a_{n-1}u^{n-1}) = \left(t+\frac{u}{m}\right)^{n+1}\mod u^{n+1}.$$
By Lemma~\ref{basic}, $m=1$, which contradicts  (\ref{2|}). 
Together with Proposition~\ref{1+2}, (\ref{a0}) follows.

Since  $\dim(M^{\Z_2})-\dim(X) > 2$ and $b_2(X)=1$, by \cite[Lemma 7.7]{LT},  
\begin{equation}\label{nl}
c_1\big(N_{M^{\Z_2}}\big)|_X = 2 \frac{\Gamma_1}{m}u=\frac{2}{m}u,
\end{equation}
where $\Gamma_1$ is the sum of the weights $1$'s on the normal bundle to $X$, which is $1$ here.
Since $\frac{2}{m}$ needs to be an integer, we have $m\,|\,2$. Together with (\ref{2|}), 
(\ref{nl}), and that $N_{M^{\Z_2}}|_X$ is a complex line bundle and the weight of the action on it is $1$, 
(\ref{b0}) follows. 
\end{proof}

\begin{lemma}\label{4t}
Assume the assumptions of Lemma~\ref{12} hold. Then $\dim(X)=\dim(Y)=2n$
with $n\geq 3$ being odd, and 
$$4t \,e^{S^1}(N_X)=(2t+u)^{n+1}.$$
\end{lemma}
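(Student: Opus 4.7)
The plan is to use the normal-bundle decomposition supplied by Lemma~\ref{12}, rephrase the conclusion of Lemma~\ref{mult} as a polynomial identity in $\Z[t,u]/u^{n+1}$, and then reduce it to a single numerical constraint whose solvability pins down both the parity of $n$ and the class $\lambda$.

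Concretely, Lemma~\ref{12} yields a splitting $N_X=N_X^{M^{\Z_2}}\oplus N_{M^{\Z_2}}|_X$ in which the second summand is a weight-$1$ line bundle with equivariant Euler class $t+u$, and the first summand has complex rank $n-1$ with all $S^1$-weights equal to $2$. Lemma~\ref{chern} combined with $H^{\mbox{even}}(X;\Z)=\Z[u]/u^{n+1}$ then forces
\[
e^{S^1}(N_X^{M^{\Z_2}})=\sum_{i=0}^{n-1}b_i(2t)^{n-1-i}u^i,\qquad b_i\in\Z,\ b_0=1.
\]
Lemma~\ref{mult}, applied with $\phi(Y)-\phi(X)=2$ and $\tfrac12\dim(Y)+1=n+1$, supplies $\lambda=c_0 t+c_1 u\in H^2_{S^1}(X;\Z)$ with $\lambda\cdot e^{S^1}(N_X)=(2t+u)^{n+1}$. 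The $t^{n+1}$-coefficient gives $c_0=4$, and the integrality of $b_1$ (computed from the coefficient of $w^1$ in the identity below) forces $c_1$ to be even, so I write $c_1=2e$.

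After the substitution $w=u/(2t)$, the identity becomes
\[
(1+2w)(1+ew)\,Q(w)\ \equiv\ (1+w)^{n+1}\pmod{w^{n+1}},
\]
where $Q(w)=\sum_{i=0}^{n-1}b_iw^i\in\Z[w]$. A truncation argument shows this congruence admits an integer polynomial solution $Q$ of degree $\leq n-1$ precisely when the coefficient of $w^n$ in the formal power series $(1+w)^{n+1}/\big((1+2w)(1+ew)\big)$ vanishes. A partial-fraction expansion evaluates this coefficient, for $e\neq 2$, as $\big((1-e)^{n+1}-(-1)^{n+1}\big)/(2-e)$, while a direct computation at the degenerate value $e=2$ yields $(n+1)(-1)^n$, which is nonzero. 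So the constraint reduces to $(1-e)^{n+1}=(-1)^{n+1}$ with $e\in\Z$ and $e\neq 2$.

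For $n$ even this forces $(1-e)^{n+1}=-1$, whose only real solution is $e=2$, contradicting the existence of $\lambda$; hence $n$ must be odd. For $n$ odd the equation $(1-e)^{n+1}=1$ with $e\neq 2$ leaves only $e=0$, giving $c_1=0$ and $\lambda=4t$. Combined with $n\geq 3$ from Lemma~\ref{12}, this is the stated conclusion. The main technical hurdle is the closed-form evaluation of the $w^n$-coefficient together with the careful treatment of the degenerate value $e=2$, which would otherwise appear to be a solution; the structural observation that makes this tractable is that after the rescaling $w=u/(2t)$ the weight-$2$ and weight-$1$ factors of $N_X$ enter on an equal footing in the factorization $(1+2w)(1+ew)$.
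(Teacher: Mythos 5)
Your proof is correct, and although the setup is the same as the paper's (the splitting of $N_X$ given by Lemma~\ref{12}, the form of $e^{S^1}(N_X^{M^{\Z_2}})$ from Lemma~\ref{chern}, Lemma~\ref{mult} with $\phi(Y)-\phi(X)=2$, and the coefficient comparisons giving $c_0=4$ and $c_1=2e$), the way you finish is genuinely different. The paper completes the truncated identity $(2t+a_0u)(2t+2u)\,e^{S^1}(N_X^{M^{\Z_2}})\equiv(2t+u)^{n+1}\bmod u^{n+1}$ to an exact polynomial identity by adding a term $(\lambda u)^{n+1}$ with $\lambda\in\C$, factors $(2t+u)^{n+1}+(\lambda u)^{n+1}$ into linear factors indexed by $(n+1)$-st roots of unity, and matches the two known linear factors: having both $+u$ and $-u$ occur among $e^{2\pi k\sqrt{-1}/(n+1)}\lambda u$ forces $|\lambda|=1$, $n$ odd, and $a_0=0$. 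You instead invert the unit $(1+2w)(1+ew)$ in the power-series ring after the rescaling $w=u/(2t)$, note that the degree bound $\deg Q\le n-1$ forces the $w^n$-coefficient of $(1+w)^{n+1}/\big((1+2w)(1+ew)\big)$ to vanish, and evaluate that coefficient by partial fractions; I checked both the closed form $\big((1-e)^{n+1}-(-1)^{n+1}\big)/(2-e)$ for $e\neq2$ and the value $(n+1)(-1)^n\neq0$ at $e=2$, and the resulting Diophantine condition $(1-e)^{n+1}=(-1)^{n+1}$ with $e\neq2$ indeed yields $n$ odd and $e=0$, i.e.\ $\lambda=4t$; only the necessity direction of your ``precisely when'' is needed, and it is immediate since $Q$ agrees with the series modulo $w^{n+1}$. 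Your route trades the paper's unique-factorization and roots-of-unity argument for an elementary generating-function computation, which is arguably more self-contained; the paper's argument avoids any series manipulation. One cosmetic remark: the factor $(1+ew)$ comes from the class $\lambda$ of Lemma~\ref{mult}, not from a summand of $N_X$, so your closing comment about the weight-$1$ and weight-$2$ factors entering ``on an equal footing'' is a slight misattribution, but it does not affect the proof.
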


\begin{proof}
By Lemma~\ref{12}, $\dim(X)=\dim(Y)=2n\geq 6$, and
 $$\phi(Y)-\phi(X) = 2, \,\,\mbox{and}\,\,\, e^{S^1}\big(N_{M^{\Z_2}}\big)|_X = t + u.$$
By (\ref{a0}) and Lemma~\ref{chern}, we may write
$$e^{S^1}(N_X^{M^{\Z_2}}) = (2t)^{n-1}+a_1(2t)^{n-2}u+ \cdots +a_{n-1}u^{n-1},\,\,\mbox{with $a_i\in\Z, \,\forall\, i$}.$$  
By (\ref{2normal}) and Lemma~\ref{mult},  there exist $c, d\in\Z$ such that
$$(ct+du)(t+u)\big((2t)^{n-1}+a_1(2t)^{n-2}u+\cdots +a_{n-1}u^{n-1}\big) = (2t+u)^{n+1}.$$
Comparing the coefficients of $t^{n+1}$ and of $t^nu$ on both sides, we get 
$c=4$,  and $d=2a_0$ for some $a_0\in\Z$.
Hence
\begin{equation}\label{prod}
(2t+a_0u)(2t+2u)\big((2t)^{n-1}+a_1(2t)^{n-2}u+\cdots+a_{n-1}u^{n-1}\big) 
\end{equation}
$$= (2t+u)^{n+1}  \mod u^{n+1}.$$ 
So there exists $\lambda\in\C$ such that 
$(2t+u)^{n+1} + (\lambda u)^{n+1}$, as a polynomial, is equal to the left hand side of (\ref{prod}). We have 
$1+\lambda^{n+1}=2a_0a_{n-1}$ (by comparing the coefficients of $u^{n+1}$ on both sides), and 
$$(2t+u)^{n+1} + (\lambda u)^{n+1} = \prod_{k=0}^{n} \left(2t+u + e^{\frac{2\pi k \sqrt{-1}}{n+1}}\lambda u\right).$$ 
Since there is a linear factor $2t+2u$ on the left hand side of (\ref{prod}), there exists a $k$ such that $e^{\frac{2\pi k \sqrt{-1}}{n+1}}\lambda u =u$. So $|\lambda| =1$. Since there is another linear factor $2t+a_0u$ on the left hand side of (\ref{prod}), there is another $k'$ such that $e^{\frac{2\pi k' \sqrt{-1}}{n+1}}\lambda u$ is real and it must be $-u$. Hence
$n$ must be odd, and the linear factor $2t+a_0u = 2t$. 
 \end{proof}

Now, we reach our final result of this section: 

\begin{proposition}\label{nex}
There exists no compact symplectic manifold $(M, \omega)$ admitting an effective non-semifree
 Hamiltonian $S^1$ action such that $M^{S^1}$ consists of two connected components $X$ and $Y$
satisfying $\dim(X)+\dim(Y)=\dim(M)$, $H^{\mbox{even}}(X; \Z) =\Z[u]/u^{\frac{1}{2}\dim(X)+1}$ and 
$H^{\mbox{even}}(Y; \Z) =\Z[v]/v^{\frac{1}{2}\dim(Y)+1}$, where $u=[\omega|_X]$, $v=[\omega|_Y]$ and
$[\omega]$ is integral.
\end{proposition}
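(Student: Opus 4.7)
The plan is to assume such a non-semifree action exists and derive a contradiction via the Atiyah--Bott--Berline--Vergne localization formula (Theorem~\ref{AB.BV}) applied to the unit class $\alpha=1$.

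The first step is to extract equivariant Euler-class identities for both $N_X$ and $N_Y$. Applying Lemmas~\ref{12} and \ref{4t} to $X$ (together with Proposition~\ref{1+2}) yields $\dim(X)=\dim(Y)=2n$ with $n\geq 3$ odd, $\phi(Y)-\phi(X)=2$, and the identity
$$4t\cdot e^{S^1}(N_X)=(u+2t)^{n+1}$$
in $H^*_{S^1}(X;\Z)=\Z[t][u]/u^{n+1}$. Since $v=[\omega|_Y]$ is primitive integral by Lemma~\ref{prim} (using $\codim(X)=2n>2$) and $H^{\mbox{even}}(Y;\Z)=\Z[v]/v^{n+1}$, I would apply the same two lemmas to $Y$ by passing to the reversed $S^1$-action (for which $Y$ is the moment map minimum) and then translating the resulting identity back to the original equivariant cohomology via the involution $t\mapsto -t$. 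This produces
$$-4t\cdot e^{S^1}(N_Y)=(v-2t)^{n+1}$$
in $H^*_{S^1}(Y;\Z)=\Z[t][v]/v^{n+1}$, where the extra minus sign records the reversal of the equivariant parameter.

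In the second step, since $\deg\alpha=0<\dim(M)=4n$, the localization formula becomes
$$0=\int_M 1 = \int_X\frac{1}{e^{S^1}(N_X)} + \int_Y\frac{1}{e^{S^1}(N_Y)}.$$
Inverting the two identities above gives $1/e^{S^1}(N_X)=4t/(u+2t)^{n+1}$ and $1/e^{S^1}(N_Y)=-4t/(v-2t)^{n+1}$; expanding as polynomials of degree $n$ in $u$ and $v$ respectively (using $u^{n+1}=v^{n+1}=0$) and extracting the top-degree coefficients, a short binomial computation shows that each integral equals $-\binom{2n}{n}/(2^{2n-1}t^{2n})$ when $n$ is odd. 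The two contributions therefore add rather than cancel, giving a nonzero sum---a contradiction with $\int_M 1=0$. The main obstacle is the sign bookkeeping in the first step, especially in deriving the correct prefactor $-4t$ in the $N_Y$-identity; once this is in hand, the remaining binomial manipulation is routine.
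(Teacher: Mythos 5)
Your proposal is correct and follows essentially the same route as the paper: Lemmas~\ref{12} and \ref{4t} (applied to $X$ and, by reversing the action, to $Y$) give $4t\,e^{S^1}(N_X)=(2t+u)^{n+1}$ and $-4t\,e^{S^1}(N_Y)=(v-2t)^{n+1}$ with $n$ odd, and localization of $\alpha=1$ then yields two equal nonzero contributions that cannot cancel. The only cosmetic difference is that you compute the top coefficient explicitly as $\binom{2n}{n}/(2^{2n-1})$ via the binomial series, whereas the paper only uses its positivity (after the substitution $w=-u$).
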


\begin{proof}
Assume such a symplectic manifold $(M, \omega)$ exists, the moment map is $\phi$ and $\phi(X)<\phi(Y)$.
By Lemma~\ref{12}, $\dim(X)=\dim(Y)=2n$ for some $n\geq 3$. 
We may assume $[\omega]$ is primitive.
By Lemma~\ref{4t},
$$e^{S^1}(N_X)=\frac{(2t+u)^{n+1}}{4t}.$$
Similarly, by symmetry,
$$e^{S^1}(N_Y)=\frac{(-2t+v)^{n+1}}{-4t}.$$
Using Theorem~\ref{AB.BV} to integrate $1$ on $M$, we get
$$0=\int_X \frac{1}{e^{S^1}(N_X)} + \int_Y \frac{1}{e^{S^1}(N_Y)},$$
from which, we get
\begin{equation}\label{int1}
0=\int_X \frac{1}{(2t+u)^{n+1}}+ (-1)^n \int_Y \frac{1}{(2t-v)^{n+1}}.
\end{equation}
Let $w = -u$, then $w^n = (-1)^n u^n$, so $\int_X w^n = (-1)^n$. From (\ref{int1}), we get 
\begin{equation}\label{int2}
0=\int_X \frac{1}{\big(1-\frac{w}{2t}\big)^{n+1}} + (-1)^n \int_Y \frac{1}{\big(1-\frac{v}{2t}\big)^{n+1}}.
\end{equation}
We have
$$\frac{1}{\big(1-\frac{w}{2t}\big)^{n+1}} = \Big(1+ \frac{w}{2t} + \cdots + \big(\frac{w}{2t}\big)^n \Big)^{n+1}.$$
Let $A$ be the coefficient of  $\big(\frac{w}{2t}\big)^n$  in this expression. Then $A > 0$, and
$A$ is also the coefficient of $\big(\frac{v}{2t}\big)^n$  in $\frac{1}{\big(1-\frac{v}{2t}\big)^{n+1}}$. Then (\ref{int2}) gives
$$0 = (-1)^n A + (-1)^n A, \,\,\mbox{a contradiction}.$$
\end{proof}

\section{The equivariant Euler classes of the normal bundles of the fixed point sets}\label{sec-euler}

In this section, assuming a cohomology condition on both fixed point set components, we determine the equivariant Euler classes of the normal bundles of the fixed point set components.

\begin{proposition}\label{exy}
Let $(M, \omega)$ be a compact symplectic manifold  admitting an effective Hamiltonian $S^1$ action with moment map $\phi$ such that $M^{S^1}$ consists of two connected components $X$ and $Y$ with $\dim(X)+\dim(Y)=\dim(M)$
and $\phi(X) < \phi(Y)$. Assume $[\omega]$ is primitive integral, 
$$H^{\mbox{even}}(X; \Z)=\Z[u]/u^{\frac{1}{2}\dim(X)+1}\,\,\mbox{and}\,\,\, H^{\mbox{even}}(Y; \Z)=\Z[v]/v^{\frac{1}{2}\dim(Y)+1},$$ 
where $u=[\omega|_X]$  and $v=[\omega|_Y]$. Then  the action must be semifree,
$\dim(X)=\dim(Y)=2n$ with $n\geq 1$,  
\begin{equation}\label{equi-euler}
e^{S^1}(N_X) =\frac{(t+u)^{n+1}}{t+2u}, \,\,\mbox{and}\,\,\,  e^{S^1}(N_Y) =\frac{(-t+v)^{n+1}}{-t+2v}.
\end{equation}
\end{proposition}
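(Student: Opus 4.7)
The plan is to proceed in three main stages.

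\emph{First}, I dispense with the qualitative assertions. By Proposition~\ref{nex}, the cohomological hypotheses rule out any effective non-semifree Hamiltonian $S^1$ action on such an $(M,\omega)$, so the action must be semifree. Since $\dim(X)+\dim(Y)=\dim(M)$, at least one of $X,Y$---say $X$, after possibly swapping the two components, which is allowed by the symmetric nature of the hypotheses---has dimension at least $\frac{1}{2}\dim(M)$; Lemma~\ref{halfdim} then forces $\dim(X)=\frac{1}{2}\dim(M)$, so $\dim(X)=\dim(Y)=2n$ for some integer $n\geq 1$.

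\emph{Second}, for $n\geq 2$ (i.e.\ $\dim(M)>4$), Proposition~\ref{euler} applied to $X$ provides $\phi(Y)-\phi(X)=1$ together with an integer $a_0$ satisfying $(t+a_0u)\,e^{S^1}(N_X)=(t+u)^{n+1}$. The analogous formula for $Y$---the existence of $b_0\in\Z$ with $(-t+b_0v)\,e^{S^1}(N_Y)=(-t+v)^{n+1}$---is obtained by repeating the proof of Proposition~\ref{euler} with $Y$ in place of $X$: Lemma~\ref{mult} gives $\lambda'\,e^{S^1}(N_Y)=(v-t)^{n+1}$, and since all weights on $N_Y$ equal $-1$ one writes $e^{S^1}(N_Y)=(-t)^n+\beta_1(-t)^{n-1}v+\cdots+\beta_nv^n$ via Lemma~\ref{chern} and then compares leading coefficients to extract $b_0$. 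In the case $n=1$ one bypasses Proposition~\ref{euler} entirely and writes $e^{S^1}(N_X)=t+\alpha u$, $e^{S^1}(N_Y)=-t+\beta v$ directly from Lemma~\ref{chern}.

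\emph{Third}, to pin down the constants, I apply the Atiyah--Bott--Berline--Vergne localization theorem (Theorem~\ref{AB.BV}) to the classes $1$ and $\ut$ (from Lemma~\ref{ut}), both of equivariant degree less than $\dim(M)=4n$, so each integrates to $0$ over $M$. On each fixed component I expand the rational integrand as a Laurent series in $u/t$ (resp.\ $v/t$) and extract the $u^n$ (resp.\ $v^n$) coefficient via $\int_X u^n=\int_Y v^n=1$, using the identities $\binom{2n}{n}=2\binom{2n-1}{n-1}$ and $\binom{2n-2}{n-2}/\binom{2n-1}{n-1}=(n-1)/(2n-1)$. For $n\geq 2$ this produces the linear equations $a_0+b_0=4$ and $(n-1)a_0+nb_0=4n-2$, whose unique solution is $a_0=b_0=2$. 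For $n=1$, the same two integrals yield $\alpha+\beta=0$ and $m\beta=0$ with $m=\phi(Y)-\phi(X)\in\N$, forcing $\alpha=\beta=0$; this agrees with the claimed formula, which in both cases is to be interpreted as $(t+2u)\,e^{S^1}(N_X)=(t+u)^{n+1}$ modulo $u^{n+1}$ and similarly for $Y$. The main obstacle is this last stage: $\int_M 1=0$ alone gives only $a_0+b_0=4$, and the second independent equation requires a careful Laurent expansion of $\int_M\ut=0$---particularly on the $Y$ component, where $\ut|_Y=v-t$ contributes cross-terms that must be handled precisely.
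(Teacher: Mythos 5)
Your proposal is correct, and its skeleton is the paper's: Proposition~\ref{nex} for semifreeness, Lemma~\ref{halfdim} for $\dim(X)=\dim(Y)=2n$, Proposition~\ref{euler} (and its mirror for $Y$, obtained exactly as in the paper "by symmetry") for the shape $(t+a_0u)e^{S^1}(N_X)=(t+u)^{n+1}$, $(-t+b_0v)e^{S^1}(N_Y)=(-t+v)^{n+1}$, and then two localization integrals to pin down the constants. The genuine divergence is in the second test class and in the $n=1$ case. The paper integrates $c_1^{S^1}(M)$, whose restrictions $nt+2nu$ and $-nt+2nv$ it gets from Theorem~\ref{c1} (via Lemma~\ref{c1L}), and concludes $a=b$ by the same sign/symmetry trick used for $\int_M1=0$; you instead integrate $\ut$, whose restrictions $u$ and $v-t$ come from Lemma~\ref{ut} together with $\phi(Y)-\phi(X)=1$ from Proposition~\ref{euler}. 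Your arithmetic checks out: extracting the top coefficient gives $2{2n-1\choose n-1}=a_0{2n-2\choose n-2}+b_0{2n-2\choose n-1}$, i.e.\ $(n-1)a_0+nb_0=4n-2$, which with $a_0+b_0=4$ (your negative-binomial expansion of $\int_M1=0$ is equivalent to the paper's Lemma~\ref{n-1n}, $B=2A$) forces $a_0=b_0=2$. What this buys is independence from the implication $(1)\Rightarrow(3)$ of Theorem~\ref{c1}, at the cost of a slightly heavier Laurent expansion on the $Y$ side, which you correctly flag as the delicate point. For $n=1$ the paper simply invokes Lemma~\ref{bothprim} to get $c_1(N_X)=c_1(N_Y)=0$, whereas you run the same two integrals to get $\alpha+\beta=0$ and $m\beta=0$; this is also correct (and even recovers Lemma~\ref{bothprim} in this situation), using only that $m=\phi(Y)-\phi(X)\in\N$ and $\int_Xu=\int_Yv=1$, the latter holding since $u,v$ are the restricted symplectic classes and generate by hypothesis.
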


\begin{proof}
By Proposition~\ref{nex}, the action must be semifree.

Since $\dim(X)+\dim(Y)=\dim(M)$, we have $\dim(X)\geq\frac{1}{2}\dim(M)$ or $\dim(Y)\geq\frac{1}{2}\dim(M)$. By Lemma~\ref{halfdim}, we get $\dim(X)=\dim(Y)=2n=\frac{1}{2}\dim(M)$ for some $n\geq 1$.

 First, assume $n=1$, i.e., $\dim(X)=\dim(Y)=2$ and $\dim(Y)=4$. In this case 
$\rank_{\C}(N_X)=\rank_{\C}(N_Y)=1$. The assumption $H^2(X; \Z)=\Z[u]/u^2$ and $H^2(Y; \Z)=\Z[v]/v^2$ means that $u$ and $v$ are both primitive integral. By Lemma~\ref{bothprim}, 
$$e^{S^1}(N_X) = t +0u, \,\,\mbox{and}\,\,\,e^{S^1}(N_Y) = -t +0 v.$$
Hence (\ref{equi-euler}) holds for dimension $4$. 

Next, assume $n > 1$.  By Proposition~\ref{euler}, 
$$e^{S^1}(N_X) = \frac{\left (t+u\right)^{n+1}}{t+ au},\,\,\mbox{with $a\in\Z$}.$$
Similarly, by symmetry,
$$e^{S^1}(N_Y) = \frac{\left (-t+v\right)^{n+1}}{-t+ bv},\,\,\mbox{with $b\in\Z$}.$$
Using Theorem~\ref{AB.BV} to integrate $1$ on $M$, we get
$$\int_X \frac{1}{e^{S^1}(N_X)} + \int_Y \frac{1}{e^{S^1}(N_Y)} =0,$$
from which we get
$$\int_X \frac{t+au}{(t+u)^{n+1}} + (-1)^n\int_Y \frac{t-bv}{(t-v)^{n+1}} =0.$$
Let $w=-u$, then $w^n=(-1)^n u^n$, and $\int_X w^n = (-1)^n$. The above integral becomes
$$\int_X \big(t-aw\big)\Big(1+\frac{w}{t}+\cdots+\big(\frac{w}{t}\big)^n\Big)^{n+1} + (-1)^n\int_Y \big(t-bv\big)\Big(1+\frac{v}{t}+\cdots+\big(\frac{v}{t}\big)^n\Big)^{n+1}=0.$$  
Let $A_n$ and $B_{n-1}$ be respectively the coefficients of $\big(\frac{w}{t}\big)^n$ and $\big(\frac{w}{t}\big)^{n-1}$
in the expression $\Big(1+\frac{w}{t}+\cdots + (\frac{w}{t})^n\Big)^{n+1}$. Then the above integral gives
$$(-1)^n (A_n -aB_{n-1}) + (-1)^n (A_n - bB_{n-1})=0.$$
By Lemma~\ref{n-1n}, $A_n=2B_{n-1}\neq 0$. So
\begin{equation}\label{a+b}
a+b =4.
\end{equation}
Next, we integrate $c_1^{S^1}(M)$ on $M$.  By Theorem~\ref{c1}, $c_1(M)|_X=2nu$ and $c_1(M)|_Y=2nv$.
Moreover, $\Gamma_X=n$ and $\Gamma_Y=-n$. So we have
$$c_1^{S^1}(M)|_X = nt + 2nu, \,\,\mbox{and}\,\, c_1^{S^1}(M)|_Y = -nt + 2nv.$$
Since $\dim(M)>2$,  we have
$$\int_X \frac{c_1^{S^1}(M)|_X}{e^{S^1}(N_X)} + \int_Y \frac{c_1^{S^1}(M)|_Y}{e^{S^1}(N_Y)} =0.$$
Similar to the above, let $w=-u$, we get
$$\int_X\frac{(nt - 2nw)(t-aw)}{(t-w)^{n+1}} + (-1)^{n-1}\int_Y\frac{(nt - 2nv)(t-bv)}{(t-v)^{n+1}}=0,$$
from which we get
$$\int_X \Big(nt^2-(2n+na)tw+2naw^2\Big)\Big(1+\frac{w}{t}+\cdots+\big(\frac{w}{t}\big)^n\Big)^{n+1}$$
$$+(-1)^{n-1}\int_Y \Big(nt^2-(2n+nb)tv+2nbv^2\Big)\Big(1+\frac{v}{t}+\cdots+\big(\frac{v}{t}\big)^n\Big)^{n+1}=0.$$
Let $C_{n-2}$ be the coefficient of $\big(\frac{w}{t}\big)^{n-2}$ in $\Big(1+\frac{w}{t}+\cdots + (\frac{w}{t})^n\Big)^{n+1}$.
Then the integral gives
$$(-1)^n\Big(nA_n -(2n+na)B_{n-1} + 2naC_{n-2}\Big) $$
$$+ (-1)^{n-1}\Big(nA_n -(2n+nb)B_{n-1} + 2nbC_{n-2}\Big)=0.$$
This simplifies to
$$(a-b)(B_{n-1} - 2C_{n-2})=0.$$
By Lemma~\ref{n-1n},  $B_{n-1} \neq 2C_{n-2}\neq 0$. So
\begin{equation}\label{a=b}
 a=b.
\end{equation}
By (\ref{a+b}) and (\ref{a=b}),  $a=b=2$,  so (\ref{equi-euler}) follows.
\end{proof}

\section{the integral cohomology rings of the fixed point sets}\label{sec-ring}
In this section, for $\dim(M)>4$, with the cohomology condition in even degrees on both fixed set components, 
we find the ring $H^*(X; \Z)$ and $H^*(Y; \Z)$.

\begin{lemma}\label{rxy}
Let $(M, \omega)$ be a compact symplectic manifold of dimension bigger than $4$ admitting a Hamiltonian $S^1$ action such that $M^{S^1}$ consists of two connected components $X$ and $Y$ with $\dim(X)+\dim(Y)=\dim(M)$. Assume $[\omega]$ is primitive integral, 
$$H^{\mbox{even}}(X; \Z)=\Z[u]/u^{\frac{1}{2}\dim(X)+1}\,\,\,\mbox{and}\,\,\, H^{\mbox{even}}(Y; \Z)=\Z[v]/v^{\frac{1}{2}\dim(Y)+1},$$ 
where $u=[\omega|_X]$  and $v=[\omega|_Y]$. Then $\dim(X)=\dim(Y)=2n > 2$, 
$$H^*(X; \Z)= \Z[u]/u^{n+1}\, \,\,\mbox{and}\,\,\,  H^*(Y; \Z)=\Z[v]/v^{n+1}.$$
\end{lemma}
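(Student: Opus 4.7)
My plan is to apply Proposition~\ref{exy} to set up the structure, then reduce the ring statement to the vanishing of odd integral cohomology of $X$ and $Y$. By Proposition~\ref{exy}, the action is semifree, $\dim X=\dim Y=2n$ with $n\geq 1$, and the equivariant Euler classes of the normal bundles are $e^{S^1}(N_X)=(t+u)^{n+1}/(t+2u)$ and $e^{S^1}(N_Y)=(-t+v)^{n+1}/(-t+2v)$. Since $\dim M=4n>4$, we have $n>1$, so $\dim X=2n>2$, which is the first assertion of the lemma. Moreover, $\dim X=\tfrac{1}{2}\dim M$ together with the assumed even cohomology activates Theorem~\ref{c1}, giving $\phi(Y)-\phi(X)=1$ and $c_1(M)=2n[\omega]$.

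The moment map is a perfect Morse--Bott function whose critical manifolds are $X$ (index $0$) and $Y$ (index $2n$), so we have the splitting $H^k(M;\Z)=H^k(X;\Z)\oplus H^{k-2n}(Y;\Z)$ for every $k$. Combining this with Poincar\'e duality on $M$, $X$, and $Y$ yields $b_i(X;\Q)=b_i(Y;\Q)$ for all $i$; in particular the odd Betti numbers of $X$ and $Y$ coincide. Since $H^{\text{even}}(X;\Z)=\Z[u]/u^{n+1}$ forces $\sum_j b_{2j}(X)=n+1$ and is already torsion-free, the ring statement $H^*(X;\Z)=\Z[u]/u^{n+1}$ is equivalent to $H^{\text{odd}}(X;\Z)=0$, i.e., $\chi(X)=n+1$, equivalently $\chi(M)=\chi(X)+\chi(Y)=2n+2$ (the corresponding statement for $Y$ then follows automatically by $b_i(X)=b_i(Y)$).

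To produce $\chi(M)=2n+2$, I would apply Atiyah--Bott--Berline--Vergne localization (Theorem~\ref{AB.BV}) to carefully chosen equivariant classes built from $\tilde u$ and $c_1^{S^1}(M)$, using the restriction data $\tilde u|_X=u$, $\tilde u|_Y=v-t$, $c_1^{S^1}(M)|_X=2nu+nt$, and $c_1^{S^1}(M)|_Y=2nv-nt$, together with the explicit Euler class formulas from Proposition~\ref{exy}. The integrals $\int_M\tilde u^k$ vanish for $k<2n$ and compute the symplectic volume at $k=2n$; combined with the equivariant extension of $c_{2n}(TM)$ (whose global integral is $\chi(M)$), the resulting identities in $\Q(t)$ should pin down enough characteristic numbers of $X$ and $Y$ to force $\chi(M)=2n+2$. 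The main obstacle is precisely this last step: the Morse--Bott plus Poincar\'e duality argument only equates the odd Betti numbers of $X$ and $Y$ but does not show they vanish, and converting the rigid equivariant data $(t+2u)\,e^{S^1}(N_X)=(t+u)^{n+1}$ in $\Z[u,t]/u^{n+1}$ (which controls Chern classes of $N_X$) into a topological conclusion about Betti numbers of $X$ itself requires combining several ABBV identities and exploiting integrality. Once the Euler characteristic is nailed down, the ring structures of $H^*(X;\Z)$ and $H^*(Y;\Z)$ follow, the $Y$-side handled by the evident symmetry $(X,u,t)\leftrightarrow(Y,v,-t)$.
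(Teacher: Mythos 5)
Your reduction has two genuine gaps, and the second one is fatal to the strategy itself. First, even granting your (unproven) claim that $\chi(M)=2n+2$, Euler characteristics only see Betti numbers: vanishing of $\sum b_{\mathrm{odd}}(X)$ kills $H^{\mathrm{odd}}(X;\Q)$ but not torsion. The hypothesis $H^{\mathrm{even}}(X;\Z)=\Z[u]/u^{n+1}$ does not exclude torsion in odd-degree integral cohomology (such torsion corresponds, via universal coefficients, to torsion in even-degree homology, which the hypothesis does not control), so "$H^*(X;\Z)=\Z[u]/u^{n+1}$ is equivalent to $\chi(X)=n+1$" is false as stated. The paper must, and does, work harder here: it invokes \cite[Lemma 8.5]{LT} to get $H^{2k+1}(X;\R)=0$ \emph{and} $H^{2k+1}(X;\Z_p)=0$ for every prime $p$, and then \cite[Lemma 4.7]{LT} to conclude there is no torsion, which is what yields the integral statement.

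Second, the step you flag as "the main obstacle" is indeed where the proof has to happen, and the tool you propose cannot do it. Every ABBV identity you can form from $\tilde u$, $c_1^{S^1}(M)$ and the Euler classes of Proposition~\ref{exy} depends only on the restriction data $u$, $v$, $t$, $e^{S^1}(N_X)$, $e^{S^1}(N_Y)$ and on integrals over $X$ and $Y$ of even-degree classes — all of which are already fixed by the hypotheses. A hypothetical fixed component with the same even cohomology ring but nonzero odd Betti numbers would produce identical localization identities, so no combination of such identities can "force" $\chi(M)=2n+2$; and the route through $\int_M c^{S^1}_{2n}(M)$ is circular, since it reproduces $\chi(M)=\chi(X)+\chi(Y)$ without determining either side (indeed, in the paper $c(X)$ and $c^{S^1}(M)$ are computed in Proposition~\ref{whole} \emph{using} Lemma~\ref{rxy}, not the other way around). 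The paper's actual mechanism is different in kind: using Lemma~\ref{ut} and $\phi(Y)-\phi(X)=1$ from Theorem~\ref{c1}, it exhibits equivariant classes $\tilde u$ (vanishing on $X$, nonvanishing on $Y$) and $\tilde\mu=(1-n)\tilde u+(1-n)t$ whose restriction to $Y$ matches the coefficient $(1-n)v$ of $t^{n-1}$ in $e^{S^1}(N_Y)$, and then applies \cite[Lemma 8.5]{LT}. Your opening reductions (Proposition~\ref{exy}, $\dim X=\dim Y=2n>2$, the Morse--Bott splitting) are fine and agree with the paper, but without a substitute for that lemma the argument does not close.
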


\begin{proof}
Let $\phi$ be the moment map and assume $\phi(X) < \phi(Y)$. By quotienting out a finite subgroup action if necessary, we may assume the action is effective.
By Proposition~\ref{exy}, the action is semifree, $\dim(X)=\dim(Y)=2n >2$  (we assumed $\dim(M) > 4$), and
$$e^{S^1}(N_Y) = (-t)^n + (n-1)v (-t)^{n-1} + \lot $$
$$= (-1)^n \big( t^n + (1-n)v t^{n-1} + \lot \big).$$
By \cite[Lemma 8.5]{LT}, if  there exists a class $\ut$ such that  $\ut|_x =0$,  $\forall\, x\in X$ and
$\ut|_y \neq 0$,  $\forall\, y\in Y$, and there exists a class $\Tilde \mu$ such that $\Tilde \mu|_Y = (1-n)v$ (the degree
$2$ term, the coefficient of $t^{n-1}$ in $e^{S^1}(N_Y)$ above) and $\Tilde \mu|_x \neq -nt$,  $\forall \,x\in X$, then
$$H^{2k+1}(X; \R)=0\,\,\,\mbox{and}\,\,\, H^{2k+1}(X; \Z_p)=0$$ for all $k$ and all prime numbers $p$.
By Lemma~\ref{ut},  the required $\ut$ exists. Note that by Theorem~\ref{c1},  $\phi(Y)-\phi(X)=1$. We may take 
$$\Tilde \mu = (1-n)\ut +(1-n)\big(\phi(Y)-\phi(X)\big)t=(1-n)\ut +(1-n)t.$$
By the universal coefficient theorem, $H^*(X; \Z)$ has no torsion, and no odd degree terms. 
So $H^*(X; \Z) = \Z[u]/u^{n+1}$ holds.
Similarly, the claim holds for $Y$.
\end{proof}

\section{the total Chern classes of  the fixed point sets and of their normal bundles}\label{sec-totalchern}
In this section, we compute $c(N_X)$, $c(N_Y)$, $c(X)$ and $c(Y)$.
\smallskip

First,  Proposition~\ref{exy} and Lemma~\ref{chern} give us $c(N_X)$ and $c(N_Y)$.
\begin{lemma}\label{nxy}
Let $(M, \omega)$ be a compact symplectic manifold  admitting a Hamiltonian $S^1$ action 
such that $M^{S^1}$ consists of two connected components $X$ and $Y$ with $\dim(X)+\dim(Y)=\dim(M)$. 
Assume $[\omega]$ is primitive integral, $$H^{\mbox{even}}(X; \Z)=\Z[u]/u^{\frac{1}{2}\dim(X)+1}\,\,\,\mbox{and}\,\,\, H^{\mbox{even}}(Y; \Z)=\Z[v]/v^{\frac{1}{2}\dim(Y)+1},$$ 
where $u=[\omega|_X]$  and $v=[\omega|_Y]$. Then $\dim(X)=\dim(Y)=2n$ with $n\geq 1$, 
$$c(N_X) =\frac{(1+u)^{n+1}}{1+2u}, \,\,\mbox{and}\,\,\,  c(N_Y) =\frac{(1+v)^{n+1}}{1+2v}.$$
\end{lemma}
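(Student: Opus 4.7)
The plan is to feed the equivariant Euler classes supplied by Proposition~\ref{exy} into the algebraic dictionary of Lemma~\ref{chern}, which converts $e^{S^1}(N_F)$ into the ordinary total Chern class $c(N_F)$ by a single formal substitution. Since the hypotheses of Lemma~\ref{nxy} are exactly those of Proposition~\ref{exy}, no further analytic input is required; the task is purely algebraic.

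First I would invoke Proposition~\ref{exy} to conclude that the action is semifree, that $\dim(X)=\dim(Y)=2n$ with $n\geq 1$, and that
$$e^{S^1}(N_X)=\frac{(t+u)^{n+1}}{t+2u},\qquad e^{S^1}(N_Y)=\frac{(-t+v)^{n+1}}{-t+2v}.$$
Assuming without loss of generality $\phi(X)<\phi(Y)$, semifreeness together with the fact that $X$ and $Y$ are the extrema of $\phi$ forces every weight of $S^1$ on $N_X$ to equal $+1$ and every weight on $N_Y$ to equal $-1$. Hence Lemma~\ref{chern} applies to $N_X$ with $\lambda=1$ and to $N_Y$ with $\lambda=-1$: writing $e^{S^1}(N_F)=(\lambda t)^n+c_1(\lambda t)^{n-1}+\cdots+c_n$ with $c_i\in H^{2i}(F;\Z)$, one reads off $c(N_F)=1+c_1+\cdots+c_n$, which is exactly the value obtained from $e^{S^1}(N_F)$ by the substitution $\lambda t=1$.

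Applying this to the formulas above, substitution of $t=1$ in $e^{S^1}(N_X)$ gives $c(N_X)=(1+u)^{n+1}/(1+2u)$, while substitution of $t=-1$ in $e^{S^1}(N_Y)$ gives $c(N_Y)=(1+v)^{n+1}/(1+2v)$; the quotients are interpreted in $\Z[u]/u^{n+1}$ and $\Z[v]/v^{n+1}$ by inverting $1+2u$ and $1+2v$ as power series truncated in the appropriate degree.

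The only mild subtlety is to justify the substitution rigorously, since the Euler-class identity lives in $H^{*}_{S^1}(X;\Z)\cong\Z[u,t]/u^{n+1}$ and the fraction is shorthand for the unique polynomial $e^{S^1}(N_X)=\sum_{i=0}^{n}c_i t^{n-i}$ of degree $n$ in $t$ satisfying $(t+2u)\,e^{S^1}(N_X)=(t+u)^{n+1}\pmod{u^{n+1}}$. Matching this polynomial term-by-term with the expansion dictated by Lemma~\ref{chern} identifies $c_0=1$ and the other $c_i$ as the Chern classes of $N_X$, so evaluation at $t=1$ produces $c(N_X)$; the analogous step at $t=-1$ handles $N_Y$. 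I do not foresee any real obstacle here — the lemma is essentially a corollary of Proposition~\ref{exy} together with a formal manipulation of polynomials in $t$.
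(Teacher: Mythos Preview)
Your proposal is correct and follows exactly the paper's approach: the paper simply states that Proposition~\ref{exy} and Lemma~\ref{chern} together yield $c(N_X)$ and $c(N_Y)$, and you have spelled out the purely formal passage from $e^{S^1}$ to $c$ via the substitution $\lambda t\mapsto 1$. The only small omission is that Proposition~\ref{exy} assumes the action is effective while Lemma~\ref{nxy} does not, but this is handled by quotienting the circle by its (finite) kernel, which leaves $N_X$, $N_Y$ and their ordinary Chern classes unchanged.
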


To get $c(X)$ and $c(Y)$, we will use Proposition~\ref{exy}, Lemma~\ref{rxy}  and the following result.
\begin{proposition}\label{isom}\cite[Prop. 5.1 and Cor. 5.2]{LT}
Let $(M, \omega)$ be a compact symplectic manifold admitting a Hamiltonian $S^1$ action with moment map $\phi$
such that $M^{S^1}$ consists of two connected components $X$ and $Y$. Assume the action is semifree
or $H^*(M^{S^1}; \Z)$ has no torsion. Then there is an isomorphism
$$f\colon  H^*_{S^1}(X; \Z)/e^{S^1}(N_X)\to   H^*_{S^1}(Y; \Z)/e^{S^1}(N_Y)\,\,\mbox{such that}$$
$$f(\Tilde\alpha|_X) = \Tilde\alpha|_Y, \,\, \, \forall \,\Tilde\alpha\in H^*_{S^1}(M; \Z).$$
In particular, if $[\omega]$ is integral, then
$$f([\omega|_X]) = [\omega|_Y] + t\left(\phi(X)-\phi(Y)\right).$$
\end{proposition}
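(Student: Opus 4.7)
\emph{Plan.} The idea is to extract the isomorphism from the two equivariant long exact sequences attached to the pairs $(M,M^-)$ and $(M,M^+)$. Assuming $\phi(X)<\phi(Y)$, set $M^-=\{m\in M:\phi(m)>\phi(X)\}$ and $M^+=\{m\in M:\phi(m)<\phi(Y)\}$; the gradient flow of $\phi$ equivariantly retracts $M^-$ onto $Y$ and $M^+$ onto $X$. Excision together with the equivariant Thom isomorphism identifies $H^*_{S^1}(M,M^-;\Z)\cong H^{*-\codim(X)}_{S^1}(X;\Z)$ and $H^*_{S^1}(M,M^+;\Z)\cong H^{*-\codim(Y)}_{S^1}(Y;\Z)$. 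Writing $j_X,j_Y$ for the resulting pushforwards into $H^*_{S^1}(M;\Z)$ and $r_X,r_Y$ for the restrictions to the fixed components, the two pair sequences read
$$\cdots\to H^{*-\codim(X)}_{S^1}(X)\xrightarrow{\,j_X\,}H^*_{S^1}(M)\xrightarrow{\,r_Y\,}H^*_{S^1}(Y)\to\cdots,$$
$$\cdots\to H^{*-\codim(Y)}_{S^1}(Y)\xrightarrow{\,j_Y\,}H^*_{S^1}(M)\xrightarrow{\,r_X\,}H^*_{S^1}(X)\to\cdots,$$
and naturality of the Thom class gives the self-intersection identities $r_X\circ j_X(\cdot)=e^{S^1}(N_X)\cdot(\cdot)$ and $r_Y\circ j_Y(\cdot)=e^{S^1}(N_Y)\cdot(\cdot)$.

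The heart of the argument is to show that, under either hypothesis of the proposition, both long exact sequences split into short exact sequences; equivalently, $r_X$ and $r_Y$ are both surjective with kernels the images of $j_Y$ and $j_X$ respectively. This is the \emph{integral} equivariant formality statement for $\phi$: $H^*_{S^1}(M;\Z)$ is a free module over $H^*_{S^1}(\mbox{pt};\Z)$ decomposing into the contributions of the two critical manifolds. When $H^*(M^{S^1};\Z)$ is torsion-free this follows directly from the $\Z$-coefficient Morse-Bott spectral sequence for $\phi$; in the semifree case every weight of the $S^1$-representation on $N_X$ (resp.\ $N_Y$) is $\pm1$, so by Lemma~\ref{chern} the Euler class $e^{S^1}(N_X)=t^n+c_1t^{n-1}+\cdots+c_n$ is monic of degree $n$ in $t$, hence a non-zero-divisor in $H^*_{S^1}(X;\Z)=H^*(X;\Z)[t]$, forcing the connecting Morse-Bott differentials to vanish.

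With the splitting in hand, the isomorphism is produced by a standard diagram chase. Given $\xi\in H^*_{S^1}(X;\Z)$ choose a lift $\tilde\alpha\in H^*_{S^1}(M;\Z)$ with $r_X(\tilde\alpha)=\xi$ and set $f(\xi):=r_Y(\tilde\alpha)$ modulo $e^{S^1}(N_Y)$. Two lifts differ by an element of $\ker r_X=j_Y\bigl(H^*_{S^1}(Y)\bigr)$, whose image under $r_Y$ lies in $e^{S^1}(N_Y)\cdot H^*_{S^1}(Y)$, so $f$ is well defined; if $\xi=e^{S^1}(N_X)\cdot\eta$ one may take $\tilde\alpha=j_X(\eta)$, which lies in $\ker r_Y$, so $f$ descends to $H^*_{S^1}(X;\Z)/e^{S^1}(N_X)$. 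The symmetric construction provides a two-sided inverse, so $f$ is an isomorphism, and tautologically $f(\tilde\alpha|_X)=\tilde\alpha|_Y$ for every $\tilde\alpha\in H^*_{S^1}(M;\Z)$. The final assertion of the proposition is then immediate: apply this identity to the class $\ut\in H^2_{S^1}(M;\Z)$ supplied by Lemma~\ref{ut}.

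The principal obstacle is precisely the integrality of the splitting step. Over $\Q$, equivariant formality of a compact Hamiltonian $S^1$-manifold is classical, but promoting it to $\Z$-coefficients requires controlling torsion contributions coming from the finite stabilizers of the action --- exactly what the two hypotheses of the proposition are arranged to preclude. Once the short exact sequences are secured, every subsequent step is formal.
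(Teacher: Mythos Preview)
The paper does not prove this proposition; it is quoted verbatim from \cite[Prop.~5.1 and Cor.~5.2]{LT} and used as a black box, so there is no in-paper argument to compare against. Your outline is correct and is essentially the standard proof (and, to the best of my knowledge, the one in \cite{LT}): set up the two equivariant Thom/Gysin sequences for $(M,M^\pm)$, show that under either hypothesis the equivariant Euler classes $e^{S^1}(N_X)$ and $e^{S^1}(N_Y)$ are non-zero-divisors in $H^*(X;\Z)[t]$ and $H^*(Y;\Z)[t]$ --- monic in $t$ in the semifree case, and with leading $t$-coefficient a nonzero integer acting injectively on a torsion-free module in the other case --- so that both $r_X$ and $r_Y$ are surjective with $\ker r_X=\operatorname{im} j_Y$ and $\ker r_Y=\operatorname{im} j_X$; the isomorphism $f$ then drops out of the diagram chase exactly as you wrote. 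One small omission: in the semifree paragraph you only spelled out that $e^{S^1}(N_X)$ is monic, but you also need $e^{S^1}(N_Y)$ to be a non-zero-divisor to conclude that $r_X$ is surjective; since all weights on $N_Y$ equal $-1$, the same argument gives $e^{S^1}(N_Y)=(-t)^m+\cdots$, which is monic up to sign and hence also a non-zero-divisor.
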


\begin{proposition}\label{whole}
Let $(M, \omega)$ be a compact symplectic manifold of dimension bigger than $4$ admitting a Hamiltonian $S^1$ action such that $M^{S^1}$ consists of two connected components $X$ and $Y$ with $\dim(X)+\dim(Y)=\dim(M)$. Assume $[\omega]$ is primitive integral, 
$$H^{\mbox{even}}(X; \Z)=\Z[u]/u^{\frac{1}{2}\dim(X)+1}\,\,\,\mbox{and}\,\,\, H^{\mbox{even}}(Y; \Z)=\Z[v]/v^{\frac{1}{2}\dim(Y)+1},$$ 
where $u=[\omega|_X]$  and $v=[\omega|_Y]$. Then 
$\dim(X)=\dim(Y)=2n > 2$, 
\begin{equation}\label{3}
c(X)=(1+u)^{n+1},\,\,\mbox{and} \,\,\, c(Y)=(1+v)^{n+1}.
\end{equation}
\end{proposition}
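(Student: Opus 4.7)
The plan is to apply Proposition~\ref{isom} to the total equivariant Chern class $c^{S^1}(TM)$, which encodes both unknowns through the splittings $TM|_X=TX\oplus N_X$ and $TM|_Y=TY\oplus N_Y$, and to compare coefficients degree by degree. By Proposition~\ref{exy} and Lemma~\ref{rxy}, $\dim(X)=\dim(Y)=2n>2$, the action is semifree, $H^*(X;\Z)=\Z[u]/u^{n+1}$, $H^*(Y;\Z)=\Z[v]/v^{n+1}$, and $\phi(Y)-\phi(X)=1$. Writing $c(X)=1+\sum_{i=1}^n k_i u^i$ and $c(Y)=1+\sum_{i=1}^n\ell_i v^i$ with $k_i,\ell_i\in\Z$, I aim to show $k_i=\ell_i=\binom{n+1}{i}$.

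By Lemma~\ref{chern}, the identity $(1+2u)c(N_X)=(1+u)^{n+1}$ mod $u^{n+1}$ upgrades to $(1+t+2u)\,c^{S^1}(N_X)=(1+t+u)^{n+1}$ in $\Z[u,t]/u^{n+1}$, and analogously $(1-t+2v)\,c^{S^1}(N_Y)=(1-t+v)^{n+1}$ in $\Z[v,t]/v^{n+1}$. Multiplying $c^{S^1}(TM)|_X=c(X)\,c^{S^1}(N_X)$ by $1+t+2u$ and applying the homomorphism $f$ of Proposition~\ref{isom} (which satisfies $f(u)=v-t$, $f(t)=t$, hence $1+t+2u\mapsto 1-t+2v$ and $1+t+u\mapsto 1+v$), then comparing with the analogous $Y$-identity, the congruence $f(c^{S^1}(TM)|_X)\equiv c^{S^1}(TM)|_Y\pmod{e^{S^1}(N_Y)}$ translates---after observing that $(v-t)^{n+1}=(2v-t)\,e^{S^1}(N_Y)$ mod $v^{n+1}$ and so $(v-t)^{n+1}\equiv 0$ in the target quotient---into the polynomial congruence
\begin{equation*}
c(Y)(1-t+v)^{n+1}\ \equiv\ c(X)|_{u\mapsto v-t}(1+v)^{n+1}\pmod{\bigl(v^{n+1},\,e^{S^1}(N_Y)\bigr)}
\end{equation*}
in $\Z[v,t]$.

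Set $\epsilon_i=k_i-\binom{n+1}{i}$ and $\delta_i=\ell_i-\binom{n+1}{i}$. Because $(1+v-t)^{n+1}(1+v)^{n+1}=(1+v)^{n+1}(1-t+v)^{n+1}$, the ``main terms'' cancel and the congruence reduces to
\begin{equation*}
h(v,t):=\sum_{i=1}^n\delta_i v^i(1-t+v)^{n+1}-\sum_{i=1}^n\epsilon_i(v-t)^i(1+v)^{n+1}\ \equiv\ 0\pmod{\bigl(v^{n+1},\,e^{S^1}(N_Y)\bigr)}.
\end{equation*}
Both generators of the ideal have total degree $\geq 2n$, so the degree-$2d$ part of $h$ vanishes identically for $d<n$. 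An induction on $d$ then extracts the unknowns: assuming $\delta_i=\epsilon_i=0$ for $i<d$, the degree-$2d$ part of $h$ equals $\delta_d v^d-\epsilon_d(v-t)^d$, and for $d\leq n-1$ the coefficients of $v^{d-1}t$ and $v^d$ force $\epsilon_d=\delta_d=0$.

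The main obstacle is the degree-$2n$ case: $h_{2n}=\delta_n v^n-\epsilon_n(v-t)^n$ need only equal $r\,e^{S^1}(N_Y)$ for some $r\in\Z$, not be identically zero. Using the expansion $e^{S^1}(N_Y)=\sum_{i=0}^n c_i(N_Y)(-t)^{n-i}$ from Lemma~\ref{chern} together with $c_0(N_Y)=1$ and $c_1(N_Y)=(n-1)v$ (computed from $c(N_Y)=(1+v)^{n+1}/(1+2v)$), the coefficient of $t^n$ in $h_{2n}=r\,e^{S^1}(N_Y)$ gives $r=-\epsilon_n$, while the coefficient of $vt^{n-1}$ gives $n\,\epsilon_n=(n-1)\,\epsilon_n$, forcing $\epsilon_n=0$ and hence $r=\delta_n=0$. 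This yields $k_i=\ell_i=\binom{n+1}{i}$ for all $i$, i.e., $c(X)=(1+u)^{n+1}$ and $c(Y)=(1+v)^{n+1}$.
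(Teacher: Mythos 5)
Your proof is correct, and it follows the same overall strategy as the paper --- feed the class $c^{S^1}(M)$, split as $c(X)c^{S^1}(N_X)$ and $c(Y)c^{S^1}(N_Y)$ with the normal data known from Proposition~\ref{exy}, through the isomorphism $f$ of Proposition~\ref{isom} --- but the execution differs in a genuine way. The paper composes $f$ with restriction to a point $y\in Y$, so the comparison happens in $H^*_{S^1}(y;\Z)/(-t)^n$; this yields $1+\sum a_i(-t)^i=(1-t)^{n+1}\bmod t^n$, which pins down only $a_1,\dots,a_{n-1}$, and the top coefficient is supplied separately by $a_n=\chi(X)=n+1$, using $H^{\mathrm{odd}}(X;\R)=0$ from Lemma~\ref{rxy}. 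You instead stay in $H^*_{S^1}(Y;\Z)/e^{S^1}(N_Y)=\Z[v,t]/(v^{n+1},e^{S^1}(N_Y))$, exploit that this ideal is homogeneous with generators in (cohomological) degrees $2n$ and $2n+2$ to run the induction below degree $2n$, and then handle the delicate top degree by identifying the degree-$2n$ part of the ideal with $\Z\cdot e^{S^1}(N_Y)$ and comparing the $t^n$ and $vt^{n-1}$ coefficients via $c_1(N_Y)=(n-1)v$. What this buys you: both $c(X)$ and $c(Y)$ come out of a single congruence, the Euler-characteristic input is avoided, and the restriction-to-a-point step is unnecessary; the price is the more careful bookkeeping with the homogeneous lift of $e^{S^1}(N_Y)$ and the observation $(v-t)^{n+1}\equiv(2v-t)e^{S^1}(N_Y)\bmod v^{n+1}$, all of which you carry out correctly. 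Two small points of hygiene: as in the paper, one should first reduce to an effective action (quotient by the kernel) before invoking Proposition~\ref{exy}, and the equality $\phi(Y)-\phi(X)=1$ is really supplied by Theorem~\ref{c1} (or Proposition~\ref{euler}) once semifreeness is known, not by Proposition~\ref{exy} and Lemma~\ref{rxy} themselves; also note that, like the paper, you are implicitly using that $f$ is multiplicative and $H^*(\mathrm{pt};\Z[t])$-linear, which is how it is constructed in \cite{LT}.
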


\begin{proof}
Let $\phi$ be the moment map, and assume $\phi(X)<\phi(Y)$. 

We may assume the action is effective. By Proposition~\ref{exy}, the action is semifree, $\dim(X)=\dim(Y)=2n > 2$,
and together with Lemma~\ref{chern}, we have
\begin{equation}\label{cnxy}
c^{S^1}(N_X) = \frac{\left (1+ t+u\right)^{n+1}}{1+ t+ 2u},\,\,\mbox{and}\,\,c^{S^1}(N_Y) = \frac{\left (1- t+v\right)^{n+1}}{1- t + 2v}.
\end{equation}
By Theorem~\ref{c1}, 
\begin{equation}\label{=1}
\phi(Y)-\phi(X) = 1.
\end{equation}
Since $H^{\mbox{even}}(X; \Z)=\Z[u]/u^{n+1}$, we may assume 
\begin{equation}\label{cx}
c(X) =1+ a_1 u + \cdots + a_iu^i + \cdots + a_n u^n,\,\,\mbox{with $a_i\in\Z$  for $1\leq i\leq n$}.
\end{equation}
 Moreover, by Lemma~\ref{rxy}, $H^{\mbox{odd}}(X; \R)=0$. So by the Euler characteristic formula, we first get
$$a_n = \int_X a_n u^n = \sum_{i=0}^n (-1)^i\dim H^i(X; \R) = n+1.$$
Next, we compute the other $a_i$'s. We have that 
\begin{equation}\label{cm}
c^{S^1}(M)|_X = c^{S^1}(N_X) c(X),\,\,\mbox{and}\,\, c^{S^1}(M)|_Y = c^{S^1}(N_Y) c(Y),
\end{equation}
and we may similarly write 
\begin{equation}\label{cy}
c(Y)=1 +\sum_{i=1}^n b_i v^i\,\, \mbox{with $b_i\in\Z$}.
\end{equation}
Consider  the map $f$ in Proposition~\ref{isom} composed with the restriction map
$H^*_{S^1}(Y; \Z)/e^{S^1}(N_Y)\to H^*_{S^1}(y; \Z)/(-t)^n$, where $y\in Y$ is a point, 
$$g\colon H^*_{S^1}(X; \Z)/e^{S^1}(N_X)\to H^*_{S^1}(Y; \Z)/e^{S^1}(N_Y) \to H^*_{S^1}(y; \Z)/(-t)^n.$$
First, by (\ref{=1}), we have $$g(u)=\big(v + (\phi(X)-\phi(Y)) t\big)|_y = -t.$$ 
Together with (\ref{cnxy}), (\ref{cx}),  (\ref{cm}), and (\ref{cy}),  we obtain
$$g(c^{S^1}(M)|_X) = \frac{1+ \sum_{i=1}^n a_i (-t)^i}{1-t} =\big(c^{S^1}(M)|_Y\big)|_y  = (1-t)^n \mod t^n,$$
which gives
$$1+\sum_{i=1}^n a_i (-t)^i = (1-t)^{n+1} \mod t^n.$$ 
From this formula, we can get $a_i$ for $1\leq i\leq n-1$. Together with the value of $a_n$ and (\ref{cx}),
we get (\ref{3}) for $c(X)$. The claim for $c(Y)$ follows similarly.
\end{proof}

\section{the integral cohomology ring and total Chern class of $M$ and the proof of Theorem~\ref{lg}}\label{sec-thm2}

In this section, we first obtain  $H^*_{S^1}(M; \Z)$, $H^*(M; \Z)$,  $c^{S^1}(M)$ and $c(M)$, 
then we prove Theorem~\ref{lg}.

\begin{proposition}\label{equibase}
Let $(M, \omega)$ be a compact symplectic manifold of dimension bigger than $4$ admitting an effective
Hamiltonian $S^1$ action with  moment map $\phi$ such that $M^{S^1}$ consists of two connected components $X$ and $Y$ with $\dim(X)+\dim(Y)=\dim(M)$. Assume $[\omega]$ is primitive integral, 
$$H^{\mbox{even}}(X; \Z)=\Z[u]/u^{\frac{1}{2}\dim(X)+1}\,\,\,\mbox{and}\,\,\, H^{\mbox{even}}(Y; \Z)=\Z[v]/v^{\frac{1}{2}\dim(Y)+1},$$ where $u=[\omega|_X]$  and $v=[\omega|_Y]$. Then $\dim(X)=\dim(Y)=2n >2$, 
 $$c^{S^1}(M) = \frac{(1+\ut)^{n+1}(1+t+\ut)^{n+1}}{ 1+t+2\ut}, \,\,\mbox{and}\,\,\, c(M) = \frac{(1+[\omega])^{2n+2}}{1+2[\omega]}.$$
Moreover,  a basis of $H^*_{S^1}(M; \Z)$ as an $H^*(\CP^{\infty}; \Z)$-module is:
$$\ut^i\in H^{2i}_{S^1}(M; \Z), \,\,\mbox{and} \,\,\, \frac{\ut^{n+1} (t+\ut)^i}{t+2\ut}\in H^{2n+2i}_{S^1}(M; \Z),\,\,\mbox{where $0\leq i\leq n$},$$
and
\[ H^*(M; \Z) = \left\{\begin{array}{ll}
 \Z[x, y]/(x^{n+1}-2xy, y^2),  & \,\,\mbox{if $n$ is odd},\\
\Z[x, y]/(x^{n+1}-2xy, y^2-x^ny), & \,\,\mbox{if $n$ is even},
\end{array}\right.\]
where $x = [\omega]$ and $\deg(y)=2n$.
In the above, $\ut$ is the class in Lemma~\ref{ut}.
\end{proposition}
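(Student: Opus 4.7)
The strategy is to determine everything by restriction to the fixed point set, using injectivity of $\iota^*\colon H^*_{S^1}(M;\Z)\to H^*_{S^1}(X;\Z)\oplus H^*_{S^1}(Y;\Z)$, which is available because $H^*(X;\Z)$ and $H^*(Y;\Z)$ are torsion-free and concentrated in even degrees by Lemma~\ref{rxy}. The dimension claim $\dim(X)=\dim(Y)=2n>2$ is already contained in Proposition~\ref{exy} together with $\dim(M)>4$. For the total Chern class, Proposition~\ref{whole} and Lemma~\ref{nxy} give $c(F)$ and $c^{S^1}(N_F)$ for $F\in\{X,Y\}$, hence $c^{S^1}(M)|_F = c(F)c^{S^1}(N_F)$. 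Using $\ut|_X = u$ and $\ut|_Y = v - t$ from Lemma~\ref{ut} (the latter because $\phi(Y)-\phi(X)=1$ by Theorem~\ref{c1}), a direct substitution shows that both sides of
\[
c^{S^1}(M)\,(1+t+2\ut) \;=\; (1+\ut)^{n+1}(1+t+\ut)^{n+1}
\]
restrict to the same classes on $X$ and $Y$; injectivity then yields the announced formula for $c^{S^1}(M)$, and setting $t=0$ gives $c(M)=(1+[\omega])^{2n+2}/(1+2[\omega])$.

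For the additive basis I invoke the Thom isomorphism together with the deformation retraction $M\setminus Y\simeq X$ coming from the downward gradient flow of $\phi$. Odd-degree vanishing collapses the long exact sequence of $(M,M\setminus Y)$ into a short exact sequence of free $\Z[t]$-modules
\[
0 \to H^{*-2n}_{S^1}(Y;\Z) \xrightarrow{\tau_Y} H^*_{S^1}(M;\Z) \xrightarrow{\iota_X^*} H^*_{S^1}(X;\Z) \to 0.
\]
The $\Z[t]$-basis $\{1,u,\ldots,u^n\}$ of $H^*_{S^1}(X;\Z)$ is lifted by $\{1,\ut,\ldots,\ut^n\}$. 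Set $\tilde y := \tau_Y(1)$; then $\tilde y|_X = 0$ and $\tilde y|_Y = e^{S^1}(N_Y)$, so the identity $\ut^{n+1}=(t+2\ut)\,\tilde y$ may be verified by restriction and holds by injectivity. The projection formula together with $\iota_Y^*(\ut+t)=v$ gives $\tau_Y(v^i) = (\ut+t)^i\,\tilde y$, which equals $\ut^{n+1}(t+\ut)^i/(t+2\ut)$, producing the announced $\Z[t]$-basis.

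For $H^*(M;\Z) = H^*_{S^1}(M;\Z)/(t)$, put $x := [\omega]$ and $y := \tilde y|_{t=0}$. Reducing $\ut^{n+1}=(t+2\ut)\tilde y$ modulo $t$ gives the first relation $x^{n+1}=2xy$. For the second I use ABBV localization (Theorem~\ref{AB.BV}): since $\tilde y|_X=0$,
\[
\int_M \tilde y^2 = \int_Y e^{S^1}(N_Y) = a_n,\qquad \int_M \ut^n\tilde y = \int_Y (v-t)^n = 1,
\]
where $a_n$ is the coefficient of $v^n$ in $(1+v)^{n+1}/(1+2v)$. The recursion $a_k + 2a_{k-1}=\binom{n+1}{k}$, summed against $(1-2)^{n+1}=(-1)^{n+1}$, yields $-2a_n+1=(-1)^{n+1}$ and hence $a_n = (1+(-1)^n)/2$, which is $1$ when $n$ is even and $0$ when $n$ is odd. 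Since $\int_M x^ny = 1$, the class $x^ny$ is a positive generator of $H^{4n}(M;\Z)$, so $y^2 = a_n\,x^ny$, giving the two cases of the presentation. Surjectivity of $\Z[x,y]/I\to H^*(M;\Z)$ follows from the basis above, and injectivity from a degree-by-degree rank count (both sides have rank $1$ in every even degree from $0$ to $4n$ except rank $2$ in degree $2n$). The main technical obstacle is verifying the closed form $\tau_Y(v^i) = \ut^{n+1}(t+\ut)^i/(t+2\ut)$ as an honest integral class; once that identification is secured, the ring structure drops out of the Euler-class recursion together with localization.
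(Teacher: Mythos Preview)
Your proof is correct and follows essentially the same route as the paper: injectivity of restriction to the fixed set for $c^{S^1}(M)$, a Kirwan--Tolman--Weitsman basis $\{\ut^i,\beta_i\}$ with $\beta_i=\tau_Y(v^i)$, the relation $\ut^{n+1}=(t+2\ut)\beta_0$ checked by restriction, and ABBV localization for $\int_M y^2$. Your recursion $a_k+2a_{k-1}=\binom{n+1}{k}$ leading to $-2a_n+1=(-1)^{n+1}$ is a cleaner way to evaluate $\int_Y e^{S^1}(N_Y)$ than the paper's direct expansion of $(t-v)^{n+1}(1+2v/t+\cdots)$, and your extra check that $\int_M x^ny=1$ together with the rank count makes the ring presentation argument more explicit than in the paper, but the overall architecture is the same.
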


\begin{proof}
By Proposition~\ref{exy} and Theorem~\ref{c1}, the action is semifree, $\dim(X)=\dim(Y)=2n >2$,
and $$\phi(Y)-\phi(X)=1.$$

First, we compute $c^{S^1}(M)$ and $c(M)$. We have 
$$c^{S^1}(M)|_X =c(X)c^{S^1}(N_X)\,\,\,\mbox{and}\,\,\, c^{S^1}(M)|_Y = c(Y)c^{S^1}(N_Y).$$
By Propositions~ \ref{whole}, \ref{exy}, Lemmas~\ref{chern} and \ref{ut}, we have
$$c^{S^1}(M)|_X = (1+u)^{n+1}\frac{(1+t+u)^{n+1}}{1+t+2u} = (1+\ut)^{n+1}\frac{(1+t+\ut)^{n+1}}{1+t+2\ut}|_X,$$
and
$$c^{S^1}(M)|_Y = (1+v)^{n+1}\frac{(1-t+v)^{n+1}}{1-t+2v} = (1+\ut)^{n+1}\frac{(1+t+\ut)^{n+1}}{1+t+2\ut}|_Y.$$
Since the action is semifree, the map
\begin{equation}\label{inj}
H^*_{S^1}(M; \Z)\to H^*_{S^1}(M^{S^1}; \Z)
\end{equation}
induced by the inclusion is injective. Hence
$c^{S^1}(M)$ is as claimed. Since by Lemma~\ref{rxy},  $H^*(M^{S^1}; \Z)$ is torsion free, we have 
\begin{equation}\label{surj}
 H^*(M; \Z) = H^*_{S^1}(M; \Z)/(t).
\end{equation} 
 So $c(M) = c^{S^1}(M)/(t)$ and it is as claimed. For the  injectivity of (\ref{inj}) and the claim (\ref{surj}),  
we refer to \cite{K} and \cite[Sec. 2]{LT}.

Next, we find a basis for $H^*_{S^1}(M; \Z)$ as an $H^*(\CP^{\infty}; \Z)$-module.
First, by Lemma~\ref{rxy}, 
$$H^*(X; \Z)=\Z[u]/u^{n+1}\,\,\,\mbox{and}\,\,\, H^*(Y; \Z)=\Z[v]/v^{n+1}.$$
By the method given in \cite{K, TW}, a way of getting a basis of $H^*_{S^1}(M; \Z)$ as an $H^*(\CP^{\infty}; \Z)$-module is as follows.
For the basis $\{1, u, \cdots, u^n\}$ of $H^*(X; \Z)$ and the basis $\{1, v, \cdots, v^n\}$ of 
$H^*(Y; \Z)$,  there exists a basis of $H^*_{S^1}(M; \Z)$ as an $H^*(\CP^{\infty}; \Z)$-module:
$$\alpha_i\in H^{2i}_{S^1}(M; \Z)\,\,\mbox{such that}\,\, \alpha_i|_X=u^i, \,\,\mbox{and}$$
$$\beta_i\in H^{2n+2i}_{S^1}(M; \Z)\,\,\mbox{such that}\,\, \beta_i|_X=0,\,\,\mbox{and}\,\,\,\beta_i|_Y=v^ie^{S^1}(N_Y),\,\,\mbox{where $0\leq i\leq n$}.$$
Since $\ut^i |_X = u^i$, we may take $\alpha_i = \ut^i$, $\,\forall\,\,  0\leq i\leq n$.  Next, we find the $\beta_i$'s.
By Proposition~\ref{exy}, 
$$e^{S^1}(N_Y) =\frac{(-t +v)^{n+1}}{-t+2v}.$$ 
We have $(\ut +t)^i|_Y = v^i$. Notice that for $\forall\,\, 0\leq i\leq n$, 
$$\big(\ut^{n+1}(\ut +t)^i\big)|_X = \big((2\ut +t) \beta_i\big) |_X = 0, \,\,\,\mbox{and}\,\,\,\, \big(\ut^{n+1}(\ut +t)^i\big)|_Y=\big( (2\ut + t) \beta_i\big) |_Y.$$
By the injectivity of (\ref{inj}), we get
\begin{equation}\label{beta0}
\ut^{n+1}(\ut +t)^i = (2\ut + t) \beta_i, \,\,\forall\,\,0\leq i\leq n.
\end{equation}
Hence we can express the $\beta_i$'s  as claimed. 

Finally, we will find the ring $H^*(M; \Z)$.  By (\ref{surj}), the image of the basis $\{\alpha_i, \beta_i \,|\, 0\leq i\leq n\}$
under the restriction map
$$r\colon H^*_{S^1}(M; \Z)\to H^*(M; \Z)$$
is  a basis of $H^*(M; \Z)$. We have  
$$r(\alpha_i)=[\omega]^i = x^i, \,\,\forall\,\, 0\leq i\leq n, \,\,\mbox{where $x=[\omega]$}.$$
Let 
$$r(\beta_0)= y.$$ 
Then a basis of $H^*(M; \Z)$ is 
$$\{1, x, \cdots, x^n, y, xy, x^2y, \cdots, x^ny\}, \,\,\mbox{where $\deg(y)=2n$}.$$
Applying the map  $r$ on both sides of  (\ref{beta0}) for $i=0$, we get the relation
$$x^{n+1} = 2xy.$$
We still need to find the relation between $y^2$ and the top generator $x^n y$. For this, we use Theorem~\ref{AB.BV} to integrate $\beta_0^2$ on $M$:
$$\int_M \beta_0^2 = \int_M y^2 = \int_X \frac{\beta_0^2|_X}{e^{S^1}(N_X)} +  \int_Y \frac{\beta_0^2|_Y}{e^{S^1}(N_Y)} = 0 + \int_Y e^{S^1}(N_Y) $$
$$= (-1)^n \int_Y \frac{(t-v)^{n+1}}{t-2v}=(-1)^n \frac{1}{t}\int_Y (t-v)^{n+1}\Big(1+\frac{2v}{t}+\cdots +\big(\frac{2v}{t}\big)^n \Big)$$
$$=(-1)^n \frac{1}{t}\int_Y \Big(t^{n+1}-{n+1\choose 1}t^n v +\cdots + {n+1\choose n}(-1)^n t v^n\Big)\Big(1+\frac{2v}{t}+\cdots +\big(\frac{2v}{t}\big)^n \Big)$$
$$=\frac{1+(-1)^n}{2}.$$
 Hence
\[ H^*(M; \Z) = \left\{\begin{array}{ll}
 \Z[x, y]/(x^{n+1}-2xy, y^2),  & \,\,\mbox{if $n$ is odd},\\
\Z[x, y]/(x^{n+1}-2xy, y^2-x^ny), & \,\,\mbox{if $n$ is even}.
\end{array}\right.\]
\end{proof}

Now we can complete the proof of Theorem~\ref{lg}:
\begin{proof}[Proof of Theorem~\ref{lg}]
$(1)$ and $(2)$ follow from Proposition~\ref{exy}, $(3)$ follows from Lemma~\ref{rxy}, $(4)$ and $(5)$ follow from Proposition~\ref{equibase}, and $(6)$ follows from
Lemma~\ref{nxy} and Proposition~\ref{whole}.
\end{proof}

\section{the ring $H^*(M; \Z)$ determines the other data --- proof of Theorem~\ref{gl}}\label{sec-thm3}

In this section, we prove Theorem~\ref{gl}.

\begin{proposition}\label{rg-rl}
Let $(M, \omega)$ be a compact symplectic manifold admitting a Hamiltonian $S^1$ action such that $M^{S^1}$ consists of two connected components $X$ and $Y$ with $\dim(X)+\dim(Y)=\dim(M)$. If  $H^*(M; \Z)\cong H^*\big(\Gt_2(\R^{2n+2}); \Z\big)$ as rings, then $H^*(X; \Z)\cong H^*(\CP^n; \Z)$ and 
$H^*(Y; \Z)\cong H^*(\CP^n; \Z)$ as rings, with the natural induced symplectic orientations on $X$ and $Y$.
\end{proposition}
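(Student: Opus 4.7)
The plan is to leverage the Morse-Bott decomposition from $\phi$ together with the explicit ring presentation of $H^*(M;\Z)$. First, for the dimensions: writing $\dim(X) = 2a$ and $\dim(Y) = 2(2n - a)$ with (without loss of generality) $a \leq n$, so $\codim(Y) = 2a$, perfect Morse-Bott theory for $\phi$ over $\R$ yields $b_{2a}(M) = b_{2a}(X) + b_0(Y) \geq 2$. But the hypothesis forces $b_{2i}(M) = 1$ for every $2i \neq 2n$, so $a = n$, hence $\dim(X) = \dim(Y) = 2n$.

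Next, since $H^*(M;\Z)$ is torsion-free and the action is Hamiltonian, the Thom-Gysin long exact sequence of the pair $(M, M^+)$ (where $M^+ = \{\phi < \phi(Y)\}$ deformation retracts onto $X$) splits over $\Z$ into short exact sequences
$$0 \to H^{k-2n}(Y;\Z) \to H^k(M;\Z) \to H^k(X;\Z) \to 0,$$
with left arrow cup product with the Thom class of $N_Y$ and right arrow restriction $\iota_X^*$. Reading degree by degree immediately yields $H^{2i}(X;\Z) \cong \Z$ for $0 \leq i \leq n$ and $H^{\mathrm{odd}}(X;\Z)=0$; the symmetric argument (using $-\phi$) gives the analogous statement for $Y$.

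For the ring structure, suppose $n \geq 2$ (the case $n = 1$ is handled by the classification of closed orientable surfaces, which forces $X \cong Y \cong S^2$). The presentation $H^*(M;\Z) = \Z[x,y]/I$ with $\deg(y) = 2n$ gives $H^{2i}(M;\Z) = \Z\{x^i\}$ for $0 \leq i < n$, so the surjective restriction $\iota_X^* \colon \Z \twoheadrightarrow H^{2i}(X;\Z) = \Z$ forces $g^i := \iota_X^*(x^i) = (x|_X)^i$ to be a generator for each $0 \leq i < n$. For $i = n$, Poincar\'e duality (applicable since $H^*(X;\Z)$ is torsion-free) makes the cup product pairing $H^2(X;\Z) \otimes H^{2n-2}(X;\Z) \to H^{2n}(X;\Z)$ perfect; since $g$ and $g^{n-1}$ generate the two factors, $g^n = g \cdot g^{n-1}$ generates $H^{2n}(X;\Z)$. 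Combined with $g^{n+1} \in H^{2n+2}(X;\Z) = 0$, this yields $H^*(X;\Z) \cong \Z[g]/g^{n+1} \cong H^*(\CP^n;\Z)$; the same argument applies to $Y$.

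The delicate step is the integral splitting of the Thom-Gysin sequence in the second paragraph. Rationally this is classical Morse-Bott perfectness; over $\Z$ it relies on the equivariant formality of $H^*_{S^1}(M;\Z)$ as a $\Z[t]$-module (a standard consequence of the torsion-freeness of $H^*(M;\Z)$ combined with the Hamiltonian structure) together with the canonical complex orientations on the negative normal bundles at the fixed components, which ensure the Thom isomorphism holds over $\Z$. Granting the splitting, the remainder of the argument is purely algebraic.
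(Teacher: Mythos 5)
Your overall strategy is the same as the paper's: Morse--Bott perfectness to force $\dim(X)=\dim(Y)=2n$, the Thom--Gysin sequence of $(M,M^+)$ to transfer cohomology from $M$ to $X$ in low degrees, and Poincar\'e duality on $X$ to promote $g^n$ to a generator of the top degree. The one step that does not hold up as written is your assertion that the Thom--Gysin sequence splits over $\Z$ into short exact sequences in \emph{all} degrees, justified by ``integral equivariant formality of $H^*_{S^1}(M;\Z)$.'' Equivariant formality over $\Z$ (degeneration of the Serre spectral sequence of $M\to M_{S^1}\to BS^1$, which indeed follows from rational degeneration plus torsion-freeness of $H^*(M;\Z)$) is a statement about that fibration; it does not by itself yield surjectivity of the restriction $H^k(M;\Z)\to H^k(X;\Z)$, i.e.\ integral perfection of the moment map. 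The standard route to that surjectivity (Atiyah--Bott/Kirwan, or Tolman--Weitsman in the semifree case) requires multiplication by the equivariant Euler class of the negative bundle of $Y$ to be injective on $H^*_{S^1}(Y;\Z)$, and over $\Z$ this needs control of $H^*(Y;\Z)$ (e.g.\ torsion-freeness) or of the weights --- exactly the kind of information the proposition is supposed to produce, so as stated the step is unjustified and borderline circular. (A smaller quibble: your ``WLOG $a\le n$'' together with the formula $b_{2a}(M)=b_{2a}(X)+b_0(Y)$ tacitly assumes $X$ is the minimum of $\phi$; since $\codim(Y)=\dim(X)$ automatically here, the cleaner normalization is just to relabel so that $\phi(X)<\phi(Y)$.)

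Fortunately you only need the splitting in degrees $k\le 2n$, and there it can be rescued cheaply: the connecting map $H^k(M^+;\Z)\to H^{k+1}(M,M^+;\Z)\cong H^{k+1-2n}(Y;\Z)$ vanishes after tensoring with $\Q$ by Kirwan's rational perfectness, so its image lies in the torsion subgroup of the target; the only degrees where both source and target can be nonzero are $k=2n-1$ and $k=2n$, with targets $H^0(Y;\Z)$ and $H^1(Y;\Z)$, which are torsion-free, so those connecting maps vanish. Alternatively, argue as the paper does: use only that $H^i(M;\Z)\to H^i(X;\Z)$ is an isomorphism for $i<2n$ (where the relative groups vanish), get $H^{2n-1}(X;\Z)=0$ from its injection into $H^0(Y;\Z)\cong\Z$ together with $b_{2n-1}(X)=0$, and take $H^{2n}(X;\Z)\cong\Z$ for free because $X$ is a closed connected oriented $2n$-manifold --- note that your short exact sequence at $k=2n$ only exhibits $H^{2n}(X;\Z)$ as a quotient of $\Z^2$ by an infinite cyclic subgroup, which a priori could have torsion, so ``immediately yields $\Z$'' is too quick there. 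With these repairs the remainder of your argument --- $x^i$ generating $H^{2i}(M;\Z)$ for $i<n$, surjectivity of restriction in those degrees, and unimodularity of the pairing $H^2(X)\otimes H^{2n-2}(X)\to H^{2n}(X)$ forcing $g^n$ to be a generator, hence $H^*(X;\Z)\cong\Z[g]/g^{n+1}$ --- is correct and coincides with the paper's proof.
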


\begin{proof}
Since $H^*(M; \Z)\cong H^*\big(\Gt_2(\R^{2n+2}); \Z\big)$ (as groups), and $M$ is a compact oriented manifold, 
we have $\dim(M)=4n$,  and $b_{2i}(M)=1$ if $0\leq 2i \leq 4n$ and $2i\neq 2n$, and $b_{2n}(M)=2$.

Let $\phi$ be the moment map and assume $\phi(X) < \phi(Y)$.
Assume $\dim(X) > \dim(Y)$, then $\dim(X) > 2n$, so $b_{\dim(X)}(M)=1$ by the first paragraph.  But, by (\ref{perfect}) and the fact $\codim(Y)=\dim(X)$, we have
$b_{\dim(X)}(M) =b_{\dim(X)}(X) + 1 = 2$, a contradiction.
So $\dim(X)\leq\dim(Y)$. Similarly, argue using $-\phi$, we have $\dim(Y)\leq\dim(X)$. Hence $\dim(X)=\dim(Y)=2n$.

Using $\phi$ as a Morse-Bott function, since $\codim(Y)=\dim(X)=2n$, the natural restriction map
$$H^i(M; \Z)\to H^i(X; \Z)$$
is an isomorphism for all $0\leq i < 2n$.
If $[\omega]$ is primitive integral, then by assumption, $H^*(M; \Z)\cong H^*\big(\Gt_2(\R^{2n+2}); \Z\big)$ has
a subring $\Z[[\omega]]/[\omega]^{n+1}$ in degrees less than $2n+1$. By the above isomorphisms, if $u=[\omega|_X]$, then the degree less
than $2n$ terms of $H^*(X; \Z)$ has the ring structure $\Z[u]/u^n$. If $u^n$ is not a generator of $H^{2n}(X; \Z)$, then $a u^n$, where $0< a < 1$, is a generator. Since $u\in H^2(X; \Z)$ is a generator, by Poincar\'e duality, 
 $a u^{n-1}\in H^{2n-2}(X; \Z)$ is a generator, a contradiction. Hence $H^*(X; \Z) = \Z[u]/u^{n+1}$.  Similarly, using $-\phi$, we get that $Y$ has the integral cohomology ring of  $\CP^n$.
\end{proof}

Now we can prove Theorem~\ref{gl}.

\begin{proof}[Proof of Theorem~\ref{gl}]
We may assume that $[\omega]$ is a primitive integral class. Let $u=[\omega|_X]$ and $v=[\omega|_Y]$.
By Proposition~\ref{rg-rl}, the assumption implies that 
$$H^*(X; \Z) = \Z[u]/u^{n+1} \,\,\,\mbox{and}\,\,\, H^*(Y; \Z) = \Z[v]/v^{n+1}.$$

 If $\dim(M)=4n=4$, then $X$ and $Y$ are diffeomorphic to $S^2$, so $M$ is diffeomorphic to an $S^2$-bundle over $S^2$. Since $H^*(M; \Z)\cong H^*(\Gt_2(\R^{4}); \Z)$, $M$ is diffeomorphic to $S^2\times S^2$. Since 
$\codim(X)=\codim(Y)=2$ and the action is effective, the action must be semifree. Other claims naturally follow.

For $\dim(M)>4$, the claims follow from Theorem~\ref{lg}.
\end{proof}

\section{when the manifold is K\"ahler --- proof of Theorem~\ref{K}}\label{sec-thm4}

The proof of Theorem~\ref{K} uses Theorems~\ref{c1}, \ref{lg}, \ref{gl} and 
the following result which is part of  \cite[Prop. 4.2]{L}.
 \begin{proposition}\label{equibiho}
 Let $(M, \omega, J)$ be a compact K\"ahler manifold of complex dimension $n$, which admits a holomorphic Hamiltonian circle action. Assume that $[\omega]$ is an integral class.  If $c_1(M)=n[\omega]$, then $M$ is $S^1$-equivariantly biholomorphic to $\Gt_2(\R^{n+2})$, equipped with a standard circle action.
 \end{proposition}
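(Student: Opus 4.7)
The plan is to combine the Kobayashi-Ochiai characterization of hyperquadrics, which already supplies a non-equivariant biholomorphism, with a conjugation argument inside $\operatorname{Aut}(\Gt_2(\R^{n+2}))$ that upgrades it to an $S^1$-equivariant biholomorphism. Since $(M,\omega,J)$ is K\"ahler with $[\omega]$ integral, the class $[\omega]$ is the first Chern class of an ample line bundle $L$ (Kodaira embedding), and the hypothesis $c_1(M)=n[\omega]$ reads $-K_M=nL$ with $L$ ample. The Kobayashi-Ochiai theorem \cite{KO} then asserts that any such compact complex $n$-manifold is biholomorphic to the smooth hyperquadric $Q^n\subset\CP^{n+1}$, which is $\Gt_2(\R^{n+2})$. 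Pick any biholomorphism $\Phi\colon M\to\Gt_2(\R^{n+2})$; transporting the circle action through $\Phi$ yields a holomorphic homomorphism $\rho\colon S^1\to \operatorname{Aut}(\Gt_2(\R^{n+2}))$.

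Next I would show that $\rho$ is conjugate, inside $\operatorname{Aut}(\Gt_2(\R^{n+2}))$, to a standard circle subgroup. The automorphism group of the smooth quadric is $PSO(n+2,\C)$, and its maximal compact subgroup $PSO(n+2)$ contains the standard circle subgroup of Example~\ref{grass}. Every compact subgroup of a reductive complex Lie group is conjugate into a prescribed maximal compact subgroup (Iwasawa-Mostow), so there exists $\sigma\in \operatorname{Aut}(\Gt_2(\R^{n+2}))$ with $\sigma\,\rho(S^1)\,\sigma^{-1}\subset PSO(n+2)$. Replacing $\Phi$ by $\sigma\circ\Phi$, the transferred action is induced from an $S^1\hookrightarrow SO(n+2)$; after one further conjugation by an element of $SO(n+2)$ the image lies in a fixed maximal torus, so the action coincides with a standard circle action on $\Gt_2(\R^{n+2})$, and the modified $\Phi$ is $S^1$-equivariant with respect to it.

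The main obstacle is this conjugation step: one must justify that $\rho(S^1)$ can be placed inside a chosen maximal compact subgroup of $PSO(n+2,\C)$, which relies on the Iwasawa-Mostow structure theory of reductive complex Lie groups (concretely, the statement that a holomorphic torus action on a Fano manifold is conjugate into a maximal compact real form of its automorphism group). A secondary point is the precise meaning of "a standard circle action": any $S^1\hookrightarrow SO(n+2)$ qualifies, and the Hamiltonian hypothesis is automatic for a circle subgroup of $SO(n+2)$ acting on the K\"ahler quadric, so it imposes no further constraint beyond locating $\rho(S^1)$ in a compact real form. The additional fixed-point hypotheses that enter Theorem~\ref{K} are what ultimately pin down the specific standard action of Example~\ref{grass}.
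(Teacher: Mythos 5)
Your argument is correct and is essentially the proof that exists for this statement: the paper does not prove Proposition~\ref{equibiho} itself but imports it from \cite[Prop.~4.2]{L}, whose proof likewise applies the Kobayashi--Ochiai characterization to obtain a non-equivariant biholomorphism of $M$ with the quadric $\Gt_2(\R^{n+2})$ and then conjugates the transported circle action into a maximal compact subgroup of the automorphism group so that it becomes linear. Your closing remarks correctly flag the only delicate points, namely the conjugacy of compact subgroups into a fixed maximal compact subgroup of $\operatorname{Aut}\big(\Gt_2(\R^{n+2})\big)$, and the fact that ``standard circle action'' here means any circle subgroup of $SO(n+2)$, the specific action of Example~\ref{grass} being singled out only later by the fixed-point hypotheses of Theorem~\ref{K}.
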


\begin{proof}[Proof of Theorem~\ref{K}]
By Theorems~\ref{c1}, \ref{lg}, and \ref{gl} (and Remark~\ref{d4}),  any of the conditions in Theorem~\ref{K}
implies that $c_1(M)=2n[\omega]$ for a suitable primitive integral K\"ahler class $[\omega]$. Then by Proposition~\ref{equibiho}, $M$ is $S^1$-equivariantly biholomorphic to $\Gt_2(\R^{2n+2})$, equipped with a standard circle action.

Now, let 
$$f\colon (M, \omega, J)\to \big(\Gt_2(\R^{2n+2}), \omega', J'\big)$$
be the $S^1$-equivariant biholomorphism.
By rescaling the symplectic form (in high dimension) or by changing the symplectic class (in dimension $4$), we may assume that $\omega$ and $f^*\omega'$ represent  the same cohomology class on $M$.  Then the family
of forms $\omega_t = (1-t)\omega + t f^*\omega'$ on $M$, where $t\in [0,1]$,  represent the same cohomology class. 
Now we show that each $\omega_t$ is nondegenerate.  For any point $x\in M$, suppose $X\in T_x M$ is such that 
$\omega_t (X, Y) = 0$ for all $Y\in T_x M$. In particular, if $Y = J X$, then $\omega_t (X, J X) = 0$. Using the facts 
$\omega(X, JX)\geq 0$, $f_* (J X) = J' f_* X$, and $\omega'(f_*X, J'f_* X)\geq 0$, we get $X= 0$. 
By Moser's method \cite{Mo}, there exists an $S^1$-equivariant isotopy $\Phi_t$ such that 
$\Phi_t^*\omega_t =\omega$, in particular, $\Phi_1^*f^*\omega' =\omega$. So $f\circ\Phi_1$ is the $S^1$-equivariant symplectomorphism we are looking for.
\end{proof}

\end{document}